\definecolor{verydarkblue}{rgb}{0,0,0.5}
\theoremstyle{plain}
\newtheorem{theorem}{Theorem}[section]
\newtheorem{lemma}[theorem]{Lemma}
\newtheorem{proposition}[theorem]{Proposition}
\newtheorem{corollary}[theorem]{Corollary}
\newtheorem{conjecture}[theorem]{Conjecture}
\theoremstyle{definition}
\newtheorem{definition}[theorem]{Definition}
\newtheorem{example}[theorem]{Example}
\newtheorem{remark}[theorem]{Remark}
\theoremstyle{remark}
\numberwithin{equation}{section}
\patchcmd{\section}{\normalfont\scshape}{\large\bfseries}{}{}
\patchcmd{\subsection}{\normalfont}{\bfseries}{}{}
\patchcmd{\subsubsection}{\normalfont}{\bfseries}{}{}
\def\@secnumfont{\@empty}
\patchcmd{\@settitle}{\uppercasenonmath\@title}{\LARGE}{}{}
\patchcmd{\@setauthors}{\MakeUppercase}{\large}{}{}
\patchcmd{\HyOrg@maketitle}{\uppercasenonmath\shorttitle}{}{}{}
\patchcmd{\HyOrg@maketitle}{\thispagestyle{firstpage}}{\thispagestyle{empty}}{}{}
\patchcmd{\@maketitle}{\@adminfootnotes}{{\def\@makefntext{\noindent\@makefnmark}\@adminfootnotes}}{}{}
\patchcmd{\@adminfootnotes}{\@setthanks}{\setlength\parindent{0pt}\@setthanks}{}{}
\patchcmd{\@setaddresses}{\def\email}{\def\email##1##2{\href{mailto:##2}{\texttt{##2}}}\def\pepe}{}{}
\patchcmd{\@setaddresses}{\\}{\pepe}{}{}
\patchcmd{\@setaddresses}{\par\addvspace\bigskipamount\indent}{}{}{}
\patchcmd{\@setaddresses}{(\ignorespaces}{\switchcolumn\vspace{1em}\begingroup\itshape}{}{}
\patchcmd{\@setaddresses}{\unskip) }{\unskip\endgroup\\}{}{}
\patchcmd{\@setaddresses}{\scshape}{}{}{}
\patchcmd{\@setaddresses}{\addresses}{{\vspace{2em}\begin{paracol}{2}\setlength{\parindent}{0pt}\switchcolumn\addresses\end{paracol}}}{}{}
\begin{document}

\title{On the Nash Problem over 3-Fold Terminal Singularities of Type cAx/2}

\author{Keng-Hung Steven Lin}

\address[Former address at OU]{%
    Department of Mathematics\\
    University of Oklahoma\\
    601 Elm Avenue, Room 423\\
    Norman, OK 73019 (USA)%
}

\email{mitmosfet@ou.edu}

\address[Current address at NTU-GICE]{%
    Graduate Institute of Communication Engineering\\
    National Taiwan University\\
    BL-501, No. 1, Sec. 4, Roosevelt Road, Taipei 10617 (Taiwan)%
}

\email{khslin@ntu.edu.tw}

\subjclass{Primary 14E18, Secondary 14E15, 14E30}
\keywords{Arc spaces, Nash problem, Terminal singularities, Terminal 3-folds, Divisorial contractions with minimal discrepancy.}

\begin{abstract}
We study Nash valuations on 3-fold terminal singularities, especially in type cAx/2. We find that, in type cAx/2, exceptional prime divisors computing the minimal discrepancy (which is 1/2 in this case) induce Nash valuations. We conjecture this in general for all 3-fold terminal singularities, and provide some evidence in the Gorenstein case.  
\end{abstract}

\maketitle


\section{Introduction}

The arc space $X_{\infty}$ of a variety $X$ plays an important role in algebraic geometry \cite{EM09,CAS18}, especially in singularity theory. In \cite{Nash95}, J.\ F.\ Nash proposed using irreducible families of arcs based on the singularities $X_{\mathrm{sing}}$ in $X$, the so-called the \emph{Nash components} of $X$, to characterize essential valuations, which are the valuations appearing in every resolution of singularities. In other words, he shows the following map
\[
\mathcal{N}_{X}:\{\text{irreducible components of }\pi_{X}^{-1}(X_{\mathrm{sing}})\}\to\{\text{essential valuations of }X\}
\]is injective in \cite{Nash95}, where $\pi_{X}:X_{\infty}\to X$ denotes the canonical map from $X_{\infty}$ to $X$. The map $\mathcal{N}_{X}$ is called the \emph{Nash map} of $X$, and the valuation induced by an irreducible component of $\pi_{X}^{-1}(X_{\mathrm{sing}})$ is called a \emph{Nash valuation} of $X$. 
In addition, the injectivity of $\mathcal{N}_{X}$ can be reformulated as follows.
\[
\{\text{Nash valuations of }X\}\subseteq\{\text{essential valuations of }X\}.
\]

It is natural to ask if the Nash map is surjective, i.e., is any essential valuation of $X$ obtainable from a Nash valuation of $X$? It is known that Nash maps are always surjective on all curve or surface singularities \cite{FdBPP12}, and counterexamples appear when the dimension of $X$ is at least $3$ \cite{dF13,IK03}. Moreover, Nash maps are surjective for singularities coming from some special spaces \cite{IK03, GPPD07, PPP08, BLM25}.  

Rather than surjectivity, characterizing the image of the Nash map becomes a central question \cite{AJR06,LJR12,dFD16}, which is known as the \emph{Nash problem}. In \cite{dFD16}, T.~de~Fernex and R.~Docampo use the relatively minimal model program (rMMP) over $X$ to provide part of the image of the Nash map. More precisely, they introduce the \emph{terminal valuations} of $X$ and prove that they are Nash valuations of $X$.
\[
\{\text{terminal valuations of }X\}\subseteq\{\text{Nash valuations of }X\}
\]However, this result becomes useless when $X$ has only terminal singularities. In particular, there are no terminal valuations in this case, so we need other characterizations for their Nash valuations.

We aim to use discrepancies to characterize Nash valuations when $X$ has only terminal singularities. In this paper, we focus on the $3$-fold terminal singularities of type cAx/2, and obtain the following main theorem.

\begin{theorem}\label{mainthmcax2}
Let $(X,p)$ be a germ of $3$-fold terminal singularity of type cAx/2. Then any exceptional prime divisor $E$ over $X$ with minimal discrepancy (i.e., $a(E,X)=\tfrac{1}{2}$) induces a Nash valuation of $X$.
\end{theorem}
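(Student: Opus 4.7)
The plan is to construct, for each exceptional divisor $E$ with $a(E,X)=\tfrac{1}{2}$, the maximal divisorial set $C_X(E)\subseteq X_\infty$ (in the sense of Ishii) and prove it is an irreducible component of $\pi_X^{-1}(p)$, thereby realizing $\mathrm{ord}_E$ as a Nash valuation. The first step is to invoke the explicit classification of divisors over cAx/2 singularities computing the minimal discrepancy $\tfrac 12$: by the work of Hayakawa and Kawakita on divisorial contractions to $3$-fold terminal singularities, such an $E$ is extracted by a divisorial contraction $f:Y\to X$ realized as a weighted blowup in coordinates adapted to the Mori--Reid normal form $\tilde X/\mu_2$ of $(X,p)$, with $\tilde X$ a cAx hypersurface in $\mathbb A^4$. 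Using this model, $C_X(E)$ can be described concretely as the closure in $X_\infty$ of the image under $f_\infty$ of the arcs on $Y$ whose center is the generic point of $E$; this set is irreducible, contained in $\pi_X^{-1}(p)$, and its generic arc has divisorial valuation $\mathrm{ord}_E$.

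The main technical step is to show that $C_X(E)$ is not properly contained in any larger irreducible closed subset of $\pi_X^{-1}(p)$. By the correspondence between irreducible closed subsets of $\pi_X^{-1}(X_{\mathrm{sing}})$ and divisorial valuations, any strict containment $C_X(E)\subsetneq C_X(F)$ corresponds to an exceptional divisor $F\neq E$ over $(X,p)$, centered at $p$, with $\mathrm{ord}_F\leq\mathrm{ord}_E$ as valuations on $\mathcal O_{X,p}$. To exclude the existence of such $F$ I would compare codimensions via the de~Fernex--Ein--Ishii formula $\mathrm{codim}(C_X(F))=\hat a(F,X)$, where $\hat a$ denotes the Mather--Jacobian log discrepancy. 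Since $\hat a(F,X)$ differs from the usual log discrepancy by a Jacobian term $\mathrm{ord}_F(\mathrm{Jac}_X)$, and $a(E,X)=\tfrac12$ is already the minimum among all discrepancies over $(X,p)$, a strict containment would force both $a(F,X)\leq\tfrac 12$ and a small Jacobian order, which together constrain $F$ severely.

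I expect the principal obstacle to be precisely this final case analysis: systematically bounding $\mathrm{ord}_F(\mathrm{Jac}_X)$ from below for \emph{arbitrary} exceptional divisors $F$ over $(X,p)$, not just those appearing in the classified list of discrepancy-$\tfrac 12$ divisors. My approach is to lift the analysis to the $\mu_2$-cover $\tilde X\to X$: $\tilde X$ is Gorenstein, so the Mather and ordinary discrepancies coincide there, and the relevant Jacobian orders become transparent weighted orders of the single defining polynomial of $\tilde X\subset\mathbb A^4$. Using equivariance of arcs and divisors under $\mu_2$, inequalities proven upstairs on $\tilde X$ transport back to $X$. Combined with the explicit description of the discrepancy-$\tfrac 12$ divisors in each Mori--Reid normal form, this should eliminate every possible competitor $F\neq E$, completing the proof that $C_X(E)$ is an irreducible component of $\pi_X^{-1}(p)$ and hence $\mathrm{ord}_E$ is a Nash valuation of $X$.
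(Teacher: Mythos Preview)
Your plan diverges from the paper's, and the obstacle you flag is real --- but the Mather--Jacobian codimension comparison you set up does not supply the leverage you need. Strict containment $C_X(E)\subsetneq C_X(F)$ yields at best $\hat a(F,X)\leq\hat a(E,X)$, i.e.\ $a(F,X)+\mathrm{ord}_F(\mathrm{Jac}_X)\leq\tfrac12+\mathrm{ord}_E(\mathrm{Jac}_X)$. But $\mathrm{ord}_F\leq_X\mathrm{ord}_E$ already forces $\mathrm{ord}_F(\mathrm{Jac}_X)\leq\mathrm{ord}_E(\mathrm{Jac}_X)$ (the Jacobian ideal lives in $\mathcal O_{X,p}$; lifting to the $\mu_2$-cover does not help, since $\mathrm{ord}_F(f)\leq\mathrm{ord}_E(f)$ for invariants implies the same for all $f$ via $f^2$), and terminality gives $a(F,X)\geq\tfrac12$. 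So the codimension inequality is automatic and produces no contradiction; in particular your claim that strict containment forces $a(F,X)\leq\tfrac12$ does not follow. Everything then rests on the case analysis you postpone, and you give no mechanism for bounding $\mathrm{ord}_F(\mathrm{Jac}_X)$ from below for \emph{arbitrary} $F$.

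The paper bypasses all of this with a more direct device. It too reduces to showing that $\mathrm{val}_E$ is $\leq_X$-minimal (minimal valuations are Nash), but argues as follows. Take Hayakawa's divisorial contraction $\pi:\overline{X}\to X$ extracting $E$; any competing $F$ is centered at some singular point $q\in\overline{X}$. For $h\in\mathcal O_{X,p}$ with $\pi^*\mathrm{div}(h)=aG+bE+A$ where $q\in G$, an elementary pull-back computation (equation~(\#) in the paper) gives $\mathrm{val}_F(h)=aa_1+c\cdot\mathrm{val}_E(h)+d$ with $a,a_1\geq 1$, $c>0$, $d\geq 0$. Thus if one can exhibit $h$ with $\mathrm{val}_E(h)=b=1$, then $\mathrm{val}_F(h)>1=\mathrm{val}_E(h)$, contradicting $\mathrm{val}_F\leq_X\mathrm{val}_E$. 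The proof walks through the charts of each weighted blow-up in Hayakawa's list for cAx/2, locates the finitely many singular points of $\overline{X}$, and for each one writes down an explicit such $h$ (e.g.\ $h=x^2+y^2+z^2+u^2$ at the quotient point in $U_2$). This avoids controlling arbitrary $F$: one needs only, per singular point of $\overline{X}$, a single function with $E$-order $1$ whose strict transform passes through that point.
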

This theorem is motivated by results for toric terminal singularities. In this paper, we use discrepancies to reformulate the result of \cite{IK03} and obtain the following statement.
\begin{theorem}[\cite{IK03}]\label{mainthm1}
    Let $X(\Delta)$ be a $\mathbb{Q}$-Gorenstein toric variety associated to a fan $\Delta\subseteq N_{\mathbb{R}}$ of dimension a least $3$ having only terminal singularities. Then any exceptional prime divisor $E$ over $X(\Delta)$ with discrepancy $a(E,X(\Delta))\leq 1$ induces a Nash valuation $\mathrm{val}_{E}$ of $X(\Delta)$. 
\end{theorem}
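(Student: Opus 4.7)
The plan is to reformulate the combinatorial characterization of Nash valuations on toric varieties from \cite{IK03} using the toric discrepancy formula. By the toric dictionary, any exceptional prime divisor $E$ over $X(\Delta)$ is of the form $E_v$ for a primitive lattice vector $v \in N \cap |\Delta|$ that is not a ray of $\Delta$; letting $\sigma \in \Delta$ be the minimal cone containing $v$ in its relative interior, the problem localizes on the affine chart $U_\sigma$.

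The key numerical ingredient is the discrepancy formula on a $\mathbb{Q}$-Gorenstein toric variety,
\[
a(E_v, X(\Delta)) = \langle m_\sigma, v\rangle - 1,
\]
where $m_\sigma \in M_\mathbb{Q}$ is the unique linear form satisfying $\langle m_\sigma, u_i\rangle = 1$ on each ray generator $u_i$ of $\sigma$. Terminality of $X(\Delta)$ is equivalent to $\langle m_\sigma, w\rangle > 1$ for every primitive $w \in \sigma \cap N$ that is not a ray generator, so the hypothesis $a(E, X(\Delta)) \le 1$ translates into the sharp bound $1 < \langle m_\sigma, v\rangle \le 2$.

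I would then invoke \cite{IK03}, which characterizes the Nash valuations of $X(\Delta)$ as those $\mathrm{val}_v$ whose primitive vector $v$ satisfies an explicit combinatorial condition ensuring that the associated arc cylinder $C_v \subseteq X(\Delta)_\infty$ is an irreducible component of $\pi^{-1}(X_{\mathrm{sing}})$ --- essentially, an ``indecomposability'' saying that $v$ admits no expression $v = w_1 + w_2$ with $w_1, w_2 \in \sigma \cap N \setminus \{0\}$ that drops $C_v$ into the closure of a larger cylinder. The core combinatorial lemma I would prove is that under terminality, $\langle m_\sigma, v\rangle \le 2$ forces this condition: any such decomposition would yield $\langle m_\sigma, w_1\rangle + \langle m_\sigma, w_2\rangle \le 2$, while each $\langle m_\sigma, w_i\rangle \ge 1$ with equality only for ray generators of $\sigma$. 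Both $w_i$ must therefore be ray generators of $\sigma$, and $v = u_i + u_j$ lies on a $2$-dimensional face with $\langle m_\sigma, v\rangle = 2$ exactly.

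The main obstacle I foresee is precisely this boundary case $\langle m_\sigma, v\rangle = 2$ with $v = u_i + u_j$ sitting on a smooth $2$-dimensional face $\tau = \mathrm{cone}(u_i, u_j)$ (smoothness coming from $2$-dimensional terminality). On such a face the naive combinatorial indecomposability is delicate, and one must leverage the hypothesis $\dim \Delta \ge 3$ through the higher-dimensional cones of $\Delta$ containing $\tau$: their non-smoothness (again by terminality, in dimension $\ge 3$) forces arcs through the corresponding orbits into $\pi^{-1}(X_{\mathrm{sing}})$, and the arc-space decomposition of \cite{IK03} then matches $\mathrm{val}_v$ to the appropriate Nash component.
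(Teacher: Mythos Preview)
Your overall strategy matches the paper's: translate $a(E,X(\Delta))\le 1$ into $\langle m_\sigma,v\rangle\le 2$ via the toric discrepancy formula, then feed this into the combinatorial Nash criterion of \cite{IK03}. The gap is in how you state that criterion. What \cite{IK03} actually proves (recorded in the paper as Theorem~\ref{IK03main}) is that Nash valuations correspond to the elements of
\[
S_\sigma \;=\; \bigcup_{\substack{\tau\preceq\sigma\\ \tau\ \text{singular}}}\bigl(\stackrel{\circ}{\tau}\cap N\bigr)
\]
that are \emph{minimal} for the partial order $u\le w\Leftrightarrow w-u\in\sigma$. This is not Hilbert-basis indecomposability: in a witness $w=u+u'$ to non-minimality, the summand $u$ is required to lie in $S_\sigma$, i.e.\ in the relative interior of a \emph{singular} face, while $u'$ is merely a nonzero lattice point of $\sigma$.

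With the correct criterion your obstacle disappears and the argument collapses to the one-line computation that is Proposition~\ref{aleq1min} in the paper. Since $u\in S_\sigma$, it cannot be a ray generator (one-dimensional cones are smooth), so terminality gives the \emph{strict} inequality $\langle m_\sigma,u\rangle>1$; and any nonzero $u'\in\sigma\cap N$ has $\langle m_\sigma,u'\rangle\ge 1$. Hence $\langle m_\sigma,w\rangle>2$, with no boundary case to chase. The configuration $v=u_i+u_j$ you single out is simply irrelevant: it lies on a $2$-face, which for a terminal toric variety is automatically smooth (terminal implies smooth in codimension $\le 2$), so $v\notin S_\sigma$ and the center of $E_v$ already sits in the smooth locus of $X(\Delta)$---outside the scope of the Nash problem altogether.
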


Based on these and the results of \cite{IK03,JK13,Chen23}, we propose the following conjectures.
\begin{conjecture}[Conjecture A]\label{conjA}
Let $(X,p)$ be a 3-fold terminal singularity with Gorenstein index $m\geq 1$. Then every exceptional prime divisor $E$ over $X$ with $a(E,X)\leq 1$ induces a Nash valuation $\mathrm{val}_{E}$ of $X$.    
\end{conjecture}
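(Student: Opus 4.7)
The plan is a case-by-case attack guided by the classification of $3$-fold terminal singularities due to Mori, Reid and Koll\'ar--Shepherd-Barron. Every such germ $(X,p)$ is either smooth, a toric singularity (handled by \Cref{mainthm1}), or belongs to one of the compound families cA, cD, cE and their cyclic quotients cA/n, cAx/2, cAx/4, cD/2, cD/3, cE/2. The case cAx/2 is the content of \Cref{mainthmcax2}, so what remains is to run the same kind of argument over each of the other families.

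For each type, the strategy is to realize a given exceptional prime divisor $E$ with $a(E,X)\leq 1$ as the exceptional divisor of an explicit divisorial contraction $f\colon Y\to X$ drawn from the classifications of Kawakita, Hayakawa, Kawamata, and Mori. These give weighted blow-up data in explicit coordinates, from which one can write down a family $\mathcal{F}_E\subseteq X_\infty$ of arcs whose generic member lifts through $f$ to an arc on $Y$ meeting $E$ transversally away from the lower strata. The valuation induced by $\overline{\mathcal{F}_E}$ is then $\mathrm{val}_E$ by construction, and the aim becomes to show that $\overline{\mathcal{F}_E}$ is an irreducible component of $\pi_X^{-1}(X_{\mathrm{sing}})$ rather than a proper subset of one.

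In the Gorenstein case (cA, cD, cE), discrepancies are integers and terminality forces $a(E,X)\geq 1$, so the hypothesis collapses to $a(E,X)=1$ and only the minimal-discrepancy extractions need be considered; this is the part of the conjecture for which current techniques look most promising. In cA it should reduce to weighted blow-ups analyzable by the same contact-locus techniques as in \Cref{mainthmcax2}, while cD and cE should require a short additional case analysis using the normal forms of Reid. The remaining non-Gorenstein types should then be attacked by pulling back to the index-one cover and tracking the cyclic action throughout the Gorenstein argument, reducing each statement to an equivariant version of a Gorenstein one.

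The main obstacle will be the maximality half of the irreducibility-plus-maximality step: even once $\mathcal{F}_E$ is constructed and shown to be irreducible, one must rule out its generic arc being absorbed into the maximal divisorial set of some other exceptional divisor $E'$ with $a(E',X)\leq a(E,X)$. In the cAx/2 proof this already forces one to replace $Y$ by a further partial resolution because the extracting contraction is itself singular along $E$, and the same phenomenon persists in cAx/4, cD/2, cD/3 and cE/2; the cAx/2 argument should serve as the template here. A secondary obstacle is that in cD and cE not every divisorial contraction with small discrepancy is a weighted blow-up in standard coordinates, so a preliminary bookkeeping of the available extractions — and a check that the list is complete — may be needed before the conjecture can be fully verified.
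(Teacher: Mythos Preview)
The statement you are attempting is labeled \emph{Conjecture A} in the paper, and the paper does not prove it. It is presented as an open problem; the paper establishes only the toric case (\Cref{mainthm1}), the case cA/$m$ via Chen's result, \Cref{conjB} (not \Cref{conjA}) in type cAx/2, and a weak existence statement in the Gorenstein case. So there is no ``paper's own proof'' to compare against, and your proposal is, as you yourself present it, a research outline rather than a proof.

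That said, your outline has a structural gap worth flagging. You propose to ``realize a given exceptional prime divisor $E$ with $a(E,X)\leq 1$ as the exceptional divisor of an explicit divisorial contraction.'' The classifications of Hayakawa and Kawakita describe divisorial contractions with terminal source; they do not guarantee that every exceptional divisor with $a(E,X)\leq 1$ is extracted by one. Indeed, the paper's own argument for cAx/2 does \emph{not} proceed this way: it fixes a single Hayakawa contraction $\pi\colon\widebar{X}\to X$ with exceptional divisor $E$ of discrepancy $\tfrac{1}{2}$, and proves $\mathrm{val}_E$ is Nash by showing it is minimal for the order $\leq_X$, using the explicit equation~(\#) relating $\mathrm{val}_E$ and $\mathrm{val}_F$ for any $F$ centered on $\widebar{X}_{\mathrm{sing}}$. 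This only treats the divisors arising from the listed contractions, which is why the paper claims \Cref{conjB} and not \Cref{conjA} for cAx/2. Your plan would need either a separate argument that every small-discrepancy $E$ is so extractable, or a different mechanism for the remaining divisors; neither is supplied, and this is precisely the gap between Conjectures A and B that the paper leaves open.
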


\begin{conjecture}[Conjecture B]\label{conjB}
Let $(X,p)$ be a 3-fold terminal singularity with Gorenstein index $m\geq 1$. Then every exceptional prime divisor $E$ over $X$ with minimal discrepancy (i.e.,  $a(E,X)=\frac{1}{m}$) induces a Nash valuation $\mathrm{val}_{E}$ of $X$.    
\end{conjecture}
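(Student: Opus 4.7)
The overall plan is to reduce Conjecture~\ref{conjB} to a case-by-case verification based on the Mori--Reid classification of $3$-fold terminal singularities. In the Gorenstein case ($m=1$) the germ $(X,p)$ is analytically a hypersurface of type cA, cD, or cE, while in the non-Gorenstein case ($m\geq 2$) it is a cyclic quotient or of type cAx/2, cAx/4, cD/$2$, cD/$3$, or cE/$2$. For each of these types the exceptional divisors $E$ with $a(E,X)=1/m$ are catalogued by the work of Hayakawa, Kawakita, and Markushevich on divisorial contractions with minimal discrepancy, which supplies a finite list of weighted blowups (often on the index-one cover) realizing all such $E$. The strategy is to use this classification as input, treating one type at a time, with Theorem~\ref{mainthmcax2} serving as the prototype for the non-Gorenstein analysis and with Theorem~\ref{mainthm1} providing the toric archetype to be transported across the cyclic cover.

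First I would handle the non-Gorenstein cases. For each type, a divisor $E$ with $a(E,X)=1/m$ is obtained from a weighted blowup of the index-one cover compatible with the $\mathbb{Z}/m$-action; the cAx/2 normal form used in Theorem~\ref{mainthmcax2} should be replaced by the local normal form for the type in question, and the wedge/arc family constructed there should be transported through the adapted weighted blowup. One expects the analogous specialization and dimension count to exhibit a family of arcs in $\pi_X^{-1}(X_{\mathrm{sing}})$ whose generic arc has valuation $\mathrm{val}_E$. The Gorenstein case ($m=1$, minimal discrepancy $1$) coincides with the case $a(E,X)=1$ of Conjecture~\ref{conjA}; for cA singularities it reduces to a weighted-blowup analysis close in spirit to the toric one in Theorem~\ref{mainthm1}, while cD and cE require the more delicate Markushevich--Kawakita classification of extremal divisorial contractions with discrepancy $1$.

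The main obstacle is to verify, in each case, that the candidate family constructed from $E$ is in fact an entire irreducible component of $\pi_X^{-1}(X_{\mathrm{sing}})$, rather than sitting inside a strictly larger family whose generic arc has smaller $\mathrm{val}_E$-order. In the cAx/2 setting this maximality is pinned down using the minimality of the discrepancy together with a direct analysis of the formal neighborhood of a sufficiently generic arc; two features of cAx/2 are crucial there, namely the near-toric structure after passing to the index-one cover and the simplicity of the relevant weighted blowup, and both fail to be uniformly available in cD/$m$, cE/$2$, and in the Gorenstein cD and cE cases. A successful extension will therefore most likely require a more intrinsic irreducibility-and-maximality argument, perhaps via a Drinfeld--Grinberg--Kazhdan-style formal model of $X_\infty$ at a generic arc of $\mathrm{val}_E$, letting the classification data of the divisorial contraction feed into the arc-component count without reliance on type-specific coordinates. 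I expect this uniform argument, rather than the case-by-case construction of arc families, to be where the conjecture genuinely resists the method of the present paper.
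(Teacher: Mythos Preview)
The statement is a conjecture in the paper, not a theorem; the paper only proves it for type cAx/2 (Theorem~\ref{mainthmcax2}) and gives a conditional statement in the Gorenstein case. So there is no full proof to compare against, only the cAx/2 argument and the Gorenstein partial result.

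Your outline mischaracterizes that cAx/2 argument, and this matters because it leads you to identify the wrong bottleneck. The paper does \emph{not} construct an arc family with generic valuation $\mathrm{val}_E$ and then check it is a full component of $\pi_X^{-1}(X_{\mathrm{sing}})$. Instead it uses the inclusion $\{\text{minimal valuations}\}\subseteq\{\text{Nash valuations}\}$ (Proposition~\ref{prop:mininash}): assuming $\mathrm{val}_E$ is not Nash, there is a Nash $\mathrm{val}_F$ with $\mathrm{val}_F<_X\mathrm{val}_E$, and $F$ is centered at one of the finitely many terminal singular points $q$ of the divisorial contraction $\pi:\widebar{X}\to X$. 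For each such $q$ the paper writes down an explicit $h\in\mathcal{O}_{X,p}$ with $\mathrm{val}_E(h)=1$; the identity $\mathrm{val}_F(h)=aa_1+c\,\mathrm{val}_E(h)+d$ (equation~(\ref{eq:valeqn})) then forces $\mathrm{val}_F(h)>\mathrm{val}_E(h)$, a contradiction. There is no dimension count, no Drinfeld--Grinberg--Kazhdan model, and no maximality check on arc families.

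Consequently the genuine obstacle to pushing the paper's method through the remaining types is not the irreducibility/maximality problem you describe, but rather a concrete question about the divisorial contraction: for every $q\in\widebar{X}_{\mathrm{sing}}$ one must either produce $h$ with $\mathrm{val}_E(h)=1$ whose strict transform passes through $q$, or show that the coefficient $c=\mathrm{val}_F(E)$ in $\phi^*E$ satisfies $c\geq 1$. The paper's Gorenstein result (Theorem~\ref{ch8prop}) makes this explicit: when all points of $\widebar{X}_{\mathrm{sing}}$ are cyclic quotient or Gorenstein terminal, the $c\geq 1$ alternative handles the Gorenstein points automatically, and the cyclic quotient points are controlled via the economic resolution. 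What blocks a uniform proof is that, for some types in the Mori list, $\widebar{X}$ can acquire non-cyclic, non-Gorenstein terminal points where neither alternative is immediately available. Your proposed arc-family approach is a legitimate alternative strategy, but it is not an extension of what the paper does, and the difficulties you flag are specific to that alternative rather than to the paper's method.
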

In other words, we show that Conjecture A holds for terminal toric singularities in any dimension, and prove that Conjecture B holds for 3-fold terminal singularities of type cAx/2. 

For the case of Gorenstein terminal singularities, we apply a similar argument to Theorem~\ref{mainthmcax2} and give the following partial result, which provides evidence for our conjectures.

\begin{theorem}
Let $(X,p)$ be a germ of $3$-fold Gorenstein terminal singularity. Let $\pi:\widebar{X}\to X$ be a divisorial contraction with minimal discrepancy (it is $1$ in this case), and let $E=\pi^{-1}(p)$ be its exceptional prime divisor. Assume that the singularities of $\widebar{X}$ are terminal cyclic quotient or Gorenstein terminal. Then there is an exceptional prime divisor over $X$ with discrepancy $1$ that induces a Nash valuation. In other words, 
\[
\{F \text{ exceptional over }X\ |\ a(F,X)=1 \text{ and } \mathrm{val}_{F} \text{ is a Nash valuation of } X\}\neq\emptyset.
\]Moreover, if every exceptional prime divisor over $X$ computing the minimal discrepancy can be obtained by a divisorial contraction with minimal discrepancy to $X$, then $\mathrm{val}_{E}$ is a Nash valuation of $X$.    
\end{theorem}


\section{Preliminaries}

\subsection{3-Fold Terminal Singularities}

We discuss our settings and introduce the notation for $3$-fold terminal singularities. We fix the base field to be the complex numbers $\mathbb{C}$. A variety $X$ is an integral separated scheme over $\mathbb{C}$. All of our varieties will be assumed to be normal.
 
Let $X$ be $\mathbb{Q}$-Gorenstein variety with singular locus $X_{\mathrm{sing}}$, i.e., there is an $m\in\mathbb{Z}_{>0}$ such that $mK_{X}$ is Cartier, where $K_{X}$ is a canonical divisor for $X$. The smallest such $m$ is called the \emph{Gorenstein index} of $X$. Let $f:Y\to X$ be a resolution of singularities such that the exceptional locus $\mathrm{exc}(f)$ is a divisor (for example, a log resolution). A divisor $E$ is \emph{over} $X$ if there is a birational morphism $f:Y\to X$ such that $E\subseteq Y$. A divisor $E$ over $X$ is \emph{exceptional} if,  additionally, $E\subseteq\mathrm{exc}(f)$.

Consider the \emph{relatively canonical divisor} $K_{Y/X}:=K_{Y}-f^{*}K_{X}$, which is chosen to be supported on $\mathrm{exc}(f)$. We have 
\[
K_{Y/X}=\sum_{i\in I}a(E_{i},X)E_{i}
\]where $\{E_{i}\}_{i\in I}$ is the set of exceptional prime divisors of $f$. The number $a(E,X)\in\mathbb{Q}$ is called the \emph{discrepancy} of $X$ with respect to the divisor $E$ over $X$. The \emph{minimal discrepancy} of $X$ is defined as the infimum of the discrepancies $a(E,X)$, where $E$ runs through all exceptional prime divisors over $X$. Notice that the number $a(E,X)$ is independent of the choice of resolution. We say $X$ has only \emph{terminal singularities} (or $X$ is \emph{terminal}, briefly) if $a(E,X)>0$ for every exceptional prime divisor $E$ over $X$. 

Notice that $\mathrm{codim}(X_{\mathrm{sing}},X)\geq 3$ when $X$ is terminal \cite{KM98}, so $X_{\mathrm{sing}}$ is the set of isolated singular points when $X$ is a terminal $3$-fold. For a germ of $3$-fold terminal singularity $(X,p)$, the cases for which the Gorenstein index is equal to $1$ (or \emph{Gorenstein terminal}, shortly) are characterized by M.\ Reid in \cite{Reid83}.
\begin{theorem}[\cite{Reid83}]
A germ of 3-fold Gorenstein singularity is terminal if and only if it is an isolated $cDV$ point.  
\end{theorem}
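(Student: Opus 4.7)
The plan is to deduce the equivalence by relating 3-fold Gorenstein singularities to surface singularities via a general hyperplane section, using adjunction to compare discrepancies. The starting point is the classical fact that Gorenstein canonical surface singularities are exactly the Du Val (ADE) singularities, combined with the definition that an isolated $cDV$ point is a 3-fold hypersurface singularity whose general hyperplane section is Du Val. Note also that, in a 3-fold, terminal forces the singularities to be isolated since $\mathrm{codim}(X_{\mathrm{sing}},X)\geq 3$, so the ``isolated'' adjective in the statement is automatic once terminality is known.

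For both implications, I would fix a hyperplane section $H \subset X$ through $p$ chosen sufficiently generally, and a log resolution $f : Y \to X$ on which the strict transform $\tilde H$ is smooth and meets the exceptional divisors transversally. Applying adjunction upstairs, $K_{\tilde H} = (K_Y + \tilde H)|_{\tilde H}$, and downstairs, $K_H = (K_X + H)|_H$, and writing $f^{*}H = \tilde H + \sum m_i E_i$ with $m_i \geq 1$, I would derive the comparison
\[
a(E_i, X) \;=\; a(E_i|_{\tilde H}, H) + m_i
\]
for every exceptional prime divisor $E_i \subseteq f^{-1}(p)$ meeting $\tilde H$. The reverse implication ($\Leftarrow$) is then essentially immediate: if $(X,p)$ is an isolated $cDV$ point, then $H$ is Du Val, so $a(E_i|_{\tilde H}, H) \geq 0$, forcing $a(E_i, X) \geq 0 + 1 = 1 > 0$; Gorensteinness of $X$ is automatic since a $cDV$ point is embedded as a hypersurface in $\mathbb{C}^{4}$. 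For the forward implication ($\Rightarrow$), assuming $(X,p)$ Gorenstein terminal, I would first show that $(X,p)$ is a hypersurface singularity (via Reid's analysis of the embedding dimension of a Gorenstein terminal 3-fold), and then invoke Reid's ``general elephant'' principle to conclude that a general hyperplane section is Du Val.

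The hard part is the ``general elephant'' step in the forward direction: namely, verifying that a sufficiently general hyperplane section of a Gorenstein terminal 3-fold has only Du Val singularities, rather than some worse isolated Gorenstein surface singularity. The reversed adjunction identity $a(E_i|_{\tilde H}, H) = a(E_i, X) - m_i$ can in principle be negative if $m_i > a(E_i, X)$, so one must choose $H$ carefully and control the multiplicities $m_i$ along every exceptional divisor; in practice this uses the explicit local coordinates coming from the hypersurface embedding, effectively case-by-case against the classification of Gorenstein terminal 3-folds. A secondary technical obstacle is to arrange the log resolution so that every exceptional prime divisor meets $\tilde H$; for those that do not, one argues via a deformation of $H$ in a pencil, or uses connectedness of $f^{-1}(p)$ for an isolated 3-fold singularity, to reduce back to the transverse situation.
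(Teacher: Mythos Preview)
The paper does not prove this theorem at all; it is simply quoted as a classical result with a citation to \cite{Reid83}, and the text immediately moves on to the higher-index classification. There is therefore no ``paper's own proof'' to compare your proposal against.

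As for the proposal itself: your adjunction identity $a(E_i,X)=a(E_i|_{\tilde H},H)+m_i$ is correct, and the implication $(\Leftarrow)$ is essentially fine as sketched (the ``divisors not meeting $\tilde H$'' issue is genuinely minor for an isolated hypersurface point, since a general $H$ through $p$ pulls back with $m_i\geq 1$ on every $E_i$ over $p$). The real gap is in $(\Rightarrow)$. You correctly flag the general-elephant step as the hard part, but your proposed resolution---``effectively case-by-case against the classification of Gorenstein terminal 3-folds''---is circular: that classification \emph{is} the theorem you are trying to prove. Reid's actual argument does not proceed this way. He first shows that canonical (hence terminal) Gorenstein 3-fold singularities are rational, then uses the fact that a general hyperplane section of an isolated rational Gorenstein 3-fold singularity is rational or minimally elliptic, and finally rules out the elliptic case by showing it would force a crepant exceptional divisor (discrepancy $0$), contradicting terminality. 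The hypersurface embedding (embedding dimension $\leq 4$) is established along the way, not assumed from a classification. So your outline has the right architecture but is missing the key input---rationality of the singularity and the rational/elliptic dichotomy for sections---that makes the forward direction go through without circularity.
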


For the cases for which the Gorenstein index greater than $1$, the complete classification is treated in \cite{Dan83, MS84, Mori85}. 

Let us introduce the classic notation to describe the classification. Denote the coordinates of the complex space $\mathbb{C}^{4}$ by $(x,y,z,u)$. Let $\mathbb{Z}_{m}$ be a cyclic group of order $m$, and consider the action $\tau:\mathbb{Z}_{m}\times\mathbb{C}^{4}\to\mathbb{C}^{4}$ by $\tau(x)=\zeta^{\alpha_{1}}x,\tau(y)=\zeta^{\alpha_{2}}y,\tau(z)=\zeta^{\alpha_{3}}z,\tau(u)=\zeta^{\alpha_{4}}u$ where $\zeta$ is a primitive $m$-root of unity and $\alpha_{i}\in\mathbb{Z}$ for $i=1,2,3,4$. The quotient variety of $\mathbb{C}^{4}$ with respect to this action is denoted by 
\[
\mathbb{C}^{4}/\tfrac{1}{m}(\alpha_{1},\alpha_{2},\alpha_{3},\alpha_{4}).
\]Its ring of regular functions agrees with the ring of invariant regular functions with respect to $\frac{1}{m}(\alpha_{1},\alpha_{2},\alpha_{3},\alpha_{4})$ (see \cite[Chapter 5]{CLS11}).

Denote the ring of complex convergent power series in the variables $x,y,z,u$ by $\mathbb{C}\{x,y,z,u\}$. Let $\phi\in\mathbb{C}\{x,y,z,u\}$ be a $\mathbb{Z}_{m}$-semi-invariant with respect to the previous action determined by the vector $\frac{1}{m}(\alpha_{1},\alpha_{2},\alpha_{3},\alpha_{4})$. Then we obtain an induced action on the germ of hypersurface $(\phi=0)\subseteq\mathbb{C}^{4}$. It induces a germ of subvariety of $\mathbb{C}^{4}/\frac{1}{m}(\alpha_{1},\alpha_{2},\alpha_{3},\alpha_{4})$, and it is denoted by $(\phi=0)\subseteq\mathbb{C}^{4}/\frac{1}{m}(\alpha_{1},\alpha_{2},\alpha_{3},\alpha_{4})$. It is called a hyperquotient singularity.

With this notation, we have the following list for the classification:

\begin{theorem}[\cite{Dan83,MS84,Mori85,Hay99,Hay00,Chen16}]\label{morilist}
Let $X$ be a germ of 3-fold terminal singularity with Gorenstein index greater than $1$. Then it is analytically isomorphic to one of the following forms: 
\begin{itemize}
\item (type cA/m) 
\[
(xy+f(z,u)=0)\subseteq\mathbb{C}^{4}/\tfrac{1}{m}(a,-a,1,0)
\]where $a,m\in\mathbb{Z}_{>0}$ with $\gcd(a,m)=1$ and $f(z,u)\in\mathbb{C}\{z,u\}$ is a $\mathbb{Z}_{m}$-invariant.
\item (type cAx/2) \[
(x^{2}+y^{2}+f(z,u)=0)\subseteq\mathbb{C}^{4}/\tfrac{1}{2}(0,1,1,1)
\]where $f(z,u)\in(z,u)^{4}\subseteq\mathbb{C}\{z,u\}$ is a $\mathbb{Z}_{2}$-invariant.
\item (type cAx/4) \[
(x^{2}+y^{2}+f(z,u)=0)\subseteq\mathbb{C}^{4}/\tfrac{1}{4}(1,3,1,2)
\]where $f(z,u)\in\mathbb{C}\{z,u\}$ is a $\mathbb{Z}_{4}$-semi-invariant such that $f(z,u)$ does not contain $u$ as a monomial.
\item (type cD/2) 
\[
(\phi=0)\subseteq\mathbb{C}^{4}/\tfrac{1}{2}(1,1,0,1)
\]where $\phi$ has one of the following forms:
\begin{itemize}
    \item (cD/2-1) $\phi=u^{2}+xyz+x^{2a}+y^{2b}+z^{c}$ where $a,b\geq 2$, $c\geq 3$.
    \item (cD/2-2) $\phi=u^{2}+y^{2}z+\lambda yx^{2a+1}+g(x,z)$ where $\lambda\in\mathbb{C}$, $a\geq 1$, \\ $g(x,z)\in(x^{4},x^{2}z^{2},z^{3})\subseteq\mathbb{C}\{x,z\}$.
\end{itemize}
\item (type cD/3) 
\[
(\phi=0)\subseteq\mathbb{C}^{4}/\tfrac{1}{3}(1,2,2,0)
\]where $\phi$ has one of the following forms:
\begin{itemize}
    \item (cD/3-1) $\phi=x^{3}+u^{2}+yz(y+z)$.
    \item (cD/3-2) $\phi=x^{3}+u^{2}+yz^{2}+xy^{4}\alpha(y^{2})+y^{6}\beta(y^{3})$\\  where $\alpha(y^{3}),\beta(y^{3})\in\mathbb{C}\{y^{3}\}$ with $4\alpha^{3}+27\beta^{2}\neq 0$.
    \item (cD/3-3) $\phi=x^{3}+u^{2}+y^{3}+xyz^{3}\alpha(z^{3})+xz^{4}\beta(z^{3})+yz^{5}\gamma(z^{3})+z^{6}\delta(z^{3})$\\ where $\alpha(z^{3}),\beta(z^{3}),\gamma(z^{3}),\delta(z^{3})\in\mathbb{C}\{z^{3}\}$.
\end{itemize}
\item (type cE/2) 
\[(x^{3}+u^{2}+g(y,z)+h(y,z)=0)\subseteq\mathbb{C}^{4}/\tfrac{1}{2}(0,1,1,1)
\]where $g(y,z)\in(y,z)^{4}\subseteq\mathbb{C}\{y,z\}$, $h(y,z)\in(y,z)^{4}\setminus(y,z)^{5}\subseteq\mathbb{C}\{y,z\}$.
\end{itemize}
\end{theorem}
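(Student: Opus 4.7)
The plan is first to narrow the candidate divisors down to $E$ itself and then to invoke the terminal-valuation framework of de~Fernex--Docampo \cite{dFD16}. For the reduction, take any exceptional prime divisor $F$ over $X$ with $a(F,X)=1$. Either $F$ corresponds to a divisor on $\widebar{X}$---which, being exceptional over $X$, must equal $E$---or $F$ is also exceptional over $\widebar{X}$; in the latter case the discrepancy formula along $\pi$ reads
\[
a(F,X)=a(F,\widebar{X})+a(E,X)\,\mathrm{val}_{F}(E)=a(F,\widebar{X})+\mathrm{val}_{F}(E).
\]
If the center of $F$ on $\widebar{X}$ meets $E$, then $\mathrm{val}_{F}(E)\geq 1$ and $a(F,\widebar{X})>0$ by terminality of $\widebar{X}$, forcing $a(F,X)>1$. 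If the center avoids $E$, then $F$ lies over a smooth point of $X$ and $\mathrm{val}_{F}$ cannot be a Nash valuation of $X$. Hence $F=E$ is the only candidate, and the theorem reduces to showing that $\mathrm{val}_{E}$ itself is a Nash valuation of $X$.

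For the latter, I would recognize $\widebar{X}$ (or a small $\mathbb{Q}$-factorialization thereof) as a relatively $\mathbb{Q}$-factorial terminal model of $X$ with $E$ as an exceptional divisor, and apply the result of \cite{dFD16} that any such divisor induces a (terminal, hence) Nash valuation. Because $-E$ is $\pi$-ample, $\pi$ is a $K_{\widebar{X}}$-negative extremal contraction, hence a single MMP step from $\widebar{X}$ down to $X$; together with the terminality of $\widebar{X}$ this identifies $\widebar{X}\to X$ as the last step of a relative MMP from a resolution of $X$. The only issue is $\mathbb{Q}$-factoriality of $\widebar{X}$: at terminal cyclic quotient points it is automatic (these are simplicial toric), while at Gorenstein terminal (cDV) points $\widebar{X}$ may fail to be $\mathbb{Q}$-factorial, in which case we replace it by a small $\mathbb{Q}$-factorialization $\widebar{X}'\to\widebar{X}$. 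Being small, this modification introduces no new exceptional divisor and leaves $\mathrm{val}_{E}$ unchanged, while preserving terminality, so $\widebar{X}'\to X$ is a $\mathbb{Q}$-factorial relative terminal model of $X$ with $E$ still extracted; \cite{dFD16} then yields that $\mathrm{val}_{E}$ is a Nash valuation of $X$.

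The main obstacle is controlling the $\mathbb{Q}$-factorialization step at cDV points and confirming that $\widebar{X}'\to X$ fits into a relative MMP in the sense required by \cite{dFD16}. For terminal cyclic quotient points the hypothesis is used directly; for cDV points the existence of a small $\mathbb{Q}$-factorialization with the desired properties uses Reid's cDV classification together with the flop-type modifications available on terminal $3$-folds. Outside the stated hypothesis, $\pi$ need not arise as a $K$-negative contraction of a terminal model, and \cite{dFD16} would not apply, which is precisely why the dichotomy in the hypothesis is required. An alternative, more explicit route---constructing an irreducible family of arcs in $\pi_{X}^{-1}(p)$ with generic valuation $\mathrm{val}_{E}$ by lifting transverse arcs on $\widebar{X}$ through $\pi$ and controlling the contribution of the singular points of $\widebar{X}$ on $E$ via Theorem~\ref{mainthm1} together with a \cite{JK13,Chen23}-style cDV analysis---relies on the same dichotomy and would be the natural backup should the MMP-based argument run into trouble at the cDV points.
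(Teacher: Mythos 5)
Your proposal does not address the statement at hand. Theorem~\ref{morilist} is the classification (``Mori's list'') of germs of $3$-fold terminal singularities of Gorenstein index $>1$: it asserts that every such germ is analytically isomorphic to one of the explicit hyperquotient normal forms cA/m, cAx/2, cAx/4, cD/2, cD/3, cE/2. In the paper this is a quoted result of Danilov, Morrison--Stevens and Mori (with later refinements by Hayakawa and Chen), and any proof must produce the list itself: one classifies the possible cyclic group actions $\tfrac{1}{m}(\alpha_1,\alpha_2,\alpha_3,\alpha_4)$ and reduces the $\mathbb{Z}_m$-semi-invariant defining equation to the stated normal forms (Mori's deformation/weight analysis of terminal hyperquotient points). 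Your text never mentions quotient weights, semi-invariants, or normal forms at all, so it cannot establish this statement; it is an argument about Nash valuations attached to a divisorial contraction $\pi:\widebar{X}\to X$, i.e.\ it is aimed at a different theorem of the paper (essentially Theorem~\ref{ch8prop}).

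Even read as a proof of that other theorem, the central step is unsound. You propose to realize $\widebar{X}$ (or a small $\mathbb{Q}$-factorialization) as a relative terminal/minimal model over $X$ and then invoke \cite{dFD16} to conclude that $\mathrm{val}_E$ is a terminal, hence Nash, valuation. But over a variety with only terminal singularities there are \emph{no} terminal valuations --- the paper states this explicitly in the introduction, and it is exactly why a different criterion is needed. Concretely, $-K_{\widebar{X}}$ is $\pi$-ample, so $\pi$ is a $K$-negative divisorial contraction; any relative MMP over $X$ starting from $\widebar{X}$ (or from its $\mathbb{Q}$-factorialization) contracts $E$, so $E$ does not survive on a relative minimal model over $X$ and $\mathrm{val}_E$ is never terminal in the sense of \cite{dFD16}. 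Your preliminary dichotomy also does not show $\mathrm{val}_E$ is Nash; it only restricts which divisors have discrepancy $1$, whereas the conclusion of Theorem~\ref{ch8prop} is existence of \emph{some} discrepancy-one divisor inducing a Nash valuation. The paper's actual mechanism is different: assume $\mathrm{val}_E$ is not Nash, use minimality (Proposition~\ref{prop:mininash}) to get a Nash valuation $\mathrm{val}_F<_X\mathrm{val}_E$ centered at a singular point of $\widebar{X}$, and derive a contradiction from the relation $\mathrm{val}_F(h)=aa_1+c\,\mathrm{val}_E(h)+d$ of Section~\ref{sec:NashEqn}, using Hayakawa's economic resolutions to bound $a(F,\widebar{X})<1$ at terminal cyclic quotient points.
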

Three-fold toric terminal singularities are studied in \cite{Dan83} and \cite{MS84}. The classification is completed by the following result in \cite{KSB88}. 
\begin{theorem}[{\cite[Theorem 6.5]{KSB88}}]
Let
\[
X=(\phi(x,y,z,u)=0)\subseteq\mathbb{C}^{4}/\tfrac{1}{m}(\alpha_{1},\alpha_{2},\alpha_{3},\alpha_{4})
\]
be one of the hyperquotient singularities listed in Theorem \ref{morilist}. Assume:
\begin{itemize}
    \item $\phi(x,y,z,u)\in\mathbb{C}[x,y,z,u]$ defines an isolated singularity at $(0,0,0,0)\in\mathbb{C}^{4}$.
    \item The action $\tau:\mathbb{Z}_{m}\times\mathbb{C}^{4}\to\mathbb{C}^{4}$ defined previously is free outside $(0,0,0,0)\in\mathbb{C}^{4}$.
\end{itemize}
Then $(X,(0,0,0,0))$ defines a 3-fold terminal singularity.
\end{theorem}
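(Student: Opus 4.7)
The plan is to verify that $X$ is normal, $\mathbb{Q}$-Gorenstein, and has only strictly positive exceptional discrepancies, by lifting each condition through the Gorenstein hypersurface $Y := (\phi = 0) \subseteq \mathbb{C}^{4}$ and exploiting the natural quotient morphism $q : Y \to X$ by the $\mathbb{Z}_{m}$-action.

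First, since $\phi$ cuts out an isolated singularity at $0 \in \mathbb{C}^{4}$, the hypersurface $Y$ has singular locus of codimension at least two, is Cohen--Macaulay as a hypersurface in a smooth ambient, and hence is normal by Serre's criterion. The quotient $X = Y/\mathbb{Z}_{m}$ is then normal as a geometric quotient of a normal variety by a finite group. Moreover, $Y$ is Gorenstein, and the $\mathbb{Z}_{m}$-semi-invariance of $\phi$ equips $\omega_{Y}$ with a canonical equivariant structure via the adjunction isomorphism $\omega_{Y} \cong \omega_{\mathbb{C}^{4}}(Y)|_{Y}$; consequently $\omega_{Y}^{\otimes m}$ descends to a line bundle on $X$, showing that $mK_{X}$ is Cartier.

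Second, I would reduce positivity of discrepancies on $X$ to a quantitative bound on discrepancies of equivariant divisors over $Y$. Because the $\mathbb{Z}_{m}$-action on $\mathbb{C}^{4}$ is free away from the origin, $q$ is étale in codimension one, so its ramification divisor vanishes and $K_{Y} = q^{*} K_{X}$. A standard argument on $\mathbb{Z}_{m}$-equivariant birational models then yields, for every exceptional prime divisor $F$ over $X$ with equivariant lift $\widetilde{F}$ of ramification index $e \mid m$, the log-discrepancy identity
\[
a(F, X) = \frac{a(\widetilde{F}, Y) + 1}{e} - 1.
\]
Thus $a(F,X) > 0$ reduces to the bound $a(\widetilde{F}, Y) \geq e$ for every equivariant $\widetilde{F}$; in particular the minimal value $a(F, X) = \tfrac{1}{m}$ is attained exactly when $\widetilde{F}$ is $\mathbb{Z}_{m}$-invariant (so $e = m$) and $a(\widetilde{F}, Y) = m$.

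The remaining task is a case-by-case verification in the six families of Theorem \ref{morilist} of the two facts: (i) $(Y, 0)$ is Gorenstein terminal, and (ii) $a(\widetilde{F}, Y) \geq e(\widetilde{F})$ for every $\mathbb{Z}_{m}$-equivariant exceptional divisor. Step (i) is essentially immediate: matching $\phi$ against M.\ Reid's compound Du Val normal forms (after an obvious coordinate change such as $x \mapsto x + iy$ for the cAx types and the tabulated Weierstrass substitutions for cD and cE) identifies $Y$ as a cDV germ, hence Gorenstein terminal. Step (ii) is carried out by taking a $\mathbb{Z}_{m}$-equivariant weighted blowup of $\mathbb{C}^{4}$ with weights aligned to $\tfrac{1}{m}(\alpha_{1}, \alpha_{2}, \alpha_{3}, \alpha_{4})$, so that any equivariant toric exceptional divisor has discrepancy on $Y$ equal to its log-discrepancy in $\mathbb{C}^{4}$ minus the weighted vanishing order of $\phi$. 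The monomial hypotheses tabulated in the list---such as $f \in (z,u)^{4}$ in cAx/2, the absence of $u$ from $f$ in cAx/4, and the prescribed weight constraints on $g$, $h$, $\alpha$, $\beta$, $\gamma$, $\delta$ in the cD and cE cases---are exactly what force the weighted order of $\phi$ to be small enough that $a(\widetilde{F}, Y) \geq e$ holds for every equivariant divisor.

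The main obstacle is the quantitative step (ii): the bound $a(\widetilde{F}, Y) \geq e$ does not follow from Gorenstein terminality of $Y$ alone, and must be established by an explicit monomial analysis in each of the six families (with cD/2 and cD/3 further subdividing). The computation is uniform in spirit but combinatorially distinct in each case, since both the weight data $\tfrac{1}{m}(\alpha_{1},\alpha_{2},\alpha_{3},\alpha_{4})$ and the admissible $\mathbb{Z}_{m}$-invariant monomials appearing in $\phi$ change from family to family; the defining conditions of Theorem \ref{morilist} are precisely calibrated to make each such calculation go through.
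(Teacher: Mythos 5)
The paper offers no argument for this statement at all: it is quoted directly from \cite{KSB88} (Theorem 6.5), so there is no internal proof to compare your route against, and your proposal has to stand on its own. Its reduction framework is sound and standard. Indeed $Y=(\phi=0)\subseteq\mathbb{C}^4$ is a normal Gorenstein hypersurface with isolated singularity, the quotient map $q:Y\to X$ is \'etale in codimension one because the action is free off the origin, hence $K_Y=q^*K_X$ and $mK_X$ is Cartier, and the log-discrepancy formula $a(\widetilde F,Y)+1=e\,\bigl(a(F,X)+1\bigr)$ for a divisor $\widetilde F$ over $Y$ with inertia $e\mid m$ lying over $F$ is correct (cf.\ \cite{KM98}, Prop.~5.20); together with integrality of discrepancies over the Gorenstein germ $Y$ it does convert terminality of $X$ into the bound $a(\widetilde F,Y)\ge e$ for every such $\widetilde F$.

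The genuine gap is your step (ii), which is where the entire content of the theorem lives. Computing the discrepancy of the toric exceptional divisor(s) of one $\mathbb{Z}_m$-equivariant weighted blowup ``aligned to'' $\tfrac1m(\alpha_1,\alpha_2,\alpha_3,\alpha_4)$ bounds only those finitely many divisors; it says nothing about the remaining equivariant divisorial valuations with nontrivial inertia, and terminality cannot be certified by testing a single model. To close the argument one must either produce an equivariant resolution (or a chain of weighted blowups/economic resolutions whose sources are shown to have milder singularities, and induct) with full discrepancy bookkeeping, or invoke and verify a Reid--Tai-type criterion for hyperquotients over all group elements and all admissible weights; this explicit case-by-case analysis is exactly what \cite{KSB88} (and the related computations of \cite{Hay99,Hay00,Chen16}) carry out, so writing that the conditions of Theorem~\ref{morilist} are ``precisely calibrated to make each such calculation go through'' asserts the conclusion rather than proving it. Two smaller inaccuracies: $\mathbb{Z}_m$-invariance of $\widetilde F$ does not force $e=m$ (the inertia can be a proper divisor of $m$ when the stabilizer acts nontrivially on $\widetilde F$ itself, so your characterization of when $a(F,X)=\tfrac1m$ is attained needs the inertia, not invariance), and step (i) is not uniformly ``immediate'': identifying $Y$ as an isolated cDV point in the cD and cE families uses the listed constraints (e.g.\ $4\alpha^3+27\beta^2\neq 0$ in cD/3-2, $g\in(x^4,x^2z^2,z^3)$ in cD/2-2) in a genuine, if routine, general-hyperplane-section computation.
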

Three-fold terminal singularities have been studied explicitly in many articles \cite{Hay99,Hay00,HayG05,Chen16}. One of the important tools for explicitly studying them is the \emph{divisorial contraction with minimal discrepancy} (or \emph{divisorial blowing-ups with minimal discrepancy}).

\begin{theorem}[\cite{Kaw93, Mar96}]
For a 3-fold terminal singularity with Gorenstein index $m\geq 1$, its minimal discrepancy is exactly $\frac{1}{m}$.
\end{theorem}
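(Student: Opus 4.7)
The plan is to show that the maximal divisorial set $N_E \subseteq X_{\infty}$ is contained in some irreducible component of $\pi_X^{-1}(p)$ and then use a codimension comparison together with Gorenstein terminality of $X$ to control the discrepancy of the resulting Nash valuation, which will furnish the required $F$.

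First, because $\pi(E)=\{p\}$, the center on $X$ of the valuation $v_E=\mathrm{ord}_E$ is $p$, so every arc in $N_E$ specializes to $p$; hence $N_E \subseteq \pi_X^{-1}(p)$. Since $N_E$ is irreducible, it lies in some irreducible component $C$ of $\pi_X^{-1}(p)$. By definition $C$ is a Nash component of $X$, and by Reguera's curve-selection/stability theorem the generic point of $C$ is a stable (fat) point of $X_\infty$ whose induced valuation is divisorial, say $v_F$ for some exceptional prime divisor $F$ over $X$; this $v_F$ is precisely the Nash valuation of $X$ associated to $C$, and $C \subseteq \overline{N_F}$.

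Next, I would invoke the Ein--Mustață codimension formula, extended to the locally complete intersection setting by de Fernex--Ein--Ishii: a Gorenstein terminal 3-fold is LCI (being a hypersurface $cDV$ singularity by Reid's classification), so for every exceptional prime divisor $G$ over $X$,
\[
\mathrm{codim}(N_G,\, X_{\infty}) \;=\; a(G,X)+1.
\]
Applied to both $E$ and $F$, the inclusion $N_E \subseteq \overline{N_F}$ yields
\[
a(F,X)+1 \;=\; \mathrm{codim}(\overline{N_F},X_{\infty}) \;\le\; \mathrm{codim}(N_E,X_{\infty}) \;=\; a(E,X)+1 \;=\; 2,
\]
so $a(F,X)\le 1$. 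Terminal Gorensteinness of $X$ forces every exceptional discrepancy to be a positive integer, so $a(F,X)\ge 1$; therefore $a(F,X)=1$, and $F$ is an exceptional prime divisor over $X$ of discrepancy $1$ whose valuation is a Nash valuation.

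The main obstacle, and where the hypothesis on the singularities of $\widebar{X}$ enters, is verifying the codimension equality $\mathrm{codim}(N_E, X_{\infty}) = a(E,X)+1 = 2$. The set $N_E$ is most naturally computed on a resolution factoring through $\widebar{X}$, so its codimension is sensitive to the local structure of $\widebar{X}$ along $E$: at smooth points of $\widebar{X}$ the classical Ein--Mustață count applies directly; at Gorenstein terminal points of $\widebar{X}$ one applies the LCI codimension formula locally; and at terminal cyclic quotient points one reduces to the explicit toric jet-scheme calculations underlying Theorem~\ref{mainthm1}. Restricting the singularities of $\widebar{X}$ to these two classes is exactly what pins the codimension of $N_E$ at $2$ and enables the comparison above.
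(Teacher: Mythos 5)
Your proposal does not address the statement at hand. The statement is the Kawamata--Markushevich theorem: for a $3$-fold terminal singularity of Gorenstein index $m$, the minimal discrepancy equals $\tfrac{1}{m}$. What you have written is instead an argument (via maximal divisorial sets, Nash components, and the Ein--Musta\c{t}\u{a}/de Fernex--Ein--Ishii codimension formula) for the paper's later result on Gorenstein terminal $3$-folds, namely that some exceptional prime divisor of discrepancy $1$ induces a Nash valuation. Nowhere do you produce an exceptional divisor with discrepancy $\tfrac{1}{m}$, nor do you bound the minimal discrepancy from below; in fact your write-up presupposes the very statement to be proved (you speak of ``a divisorial contraction with minimal discrepancy (it is $1$ in this case)''), so as a proof of the quoted theorem it is circular as well as off-target. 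Note also that the paper itself does not prove this theorem; it cites \cite{Kaw93} and \cite{Mar96}.

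For the record, a proof of the actual statement has two parts. The easy part is the lower bound: since $mK_X$ is Cartier, every discrepancy lies in $\tfrac{1}{m}\mathbb{Z}$, and terminality makes it positive, hence $a(E,X)\geq\tfrac{1}{m}$ for every exceptional prime divisor $E$. The substantial part is the upper bound, i.e.\ exhibiting some exceptional divisor with discrepancy exactly $\tfrac{1}{m}$; this is done by explicit construction, e.g.\ suitable weighted blow-ups of the cyclic quotient and hyperquotient models in the Mori--Reid classification for $m>1$ (Kawamata, later systematized by Hayakawa's divisorial contractions with minimal discrepancy, as in Definition~\ref{def:HayDivCon}), and for $m=1$ the Gorenstein (cDV) case treated by Markushevich. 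None of the arc-space machinery in your proposal bears on either part.
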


\begin{definition}[\cite{Hay99} and \cite{Hay00}]\label{def:HayDivCon}
Let $(X,p)$ be a germ of 3-fold terminal singularity with Gorenstein index $m\geq 2$. A \emph{divisorial contraction to $X$ with minimal discrepancy} is a projective birational morphism $\pi:\widebar{X}\to X$ such that
\begin{itemize}
    \item $\widebar{X}$ has only terminal singularities,
    \item the exceptional locus of $\pi$ is a prime divisor $E$ over $X$,
    \item and $K_{\widebar{X}/X}=\frac{1}{m}E$, i.e.,\ $E$ computes the minimal discrepancy $a(E,X)=\tfrac{1}{m}$ of $X$.
\end{itemize}
\end{definition}

\begin{remark}
In Definition \ref{def:HayDivCon}, we see that $-K_{\overline{X}}$ is $\pi$-ample.     
\end{remark}

The existence of such morphisms is proved by T.\ Hayakawa. 

\begin{theorem}[\cite{Hay99,Hay00}]
Let $(X,p)$ be a germ of 3-fold terminal singularity such that its Gorenstein index is greater than $1$.
Then there is a divisorial contraction to $X$ with minimal discrepancy.    
\end{theorem}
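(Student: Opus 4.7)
The approach adapts the strategy used for the cAx/2 case (Theorem~\ref{mainthmcax2}): given the divisorial contraction $\pi:\widebar{X}\to X$ with $a(E,X)=1$, the plan is to transport Nash components from $\widebar{X}$ back to $X$ via the arc-space map $\pi_\infty:\widebar{X}_\infty\to X_\infty$. The natural candidate for the desired divisor $F$ is $E$ itself, because any other exceptional prime divisor $F$ over $X$ visible on a modification that factors through $\widebar{X}$ necessarily has its center on $E$ (that is where $\pi^{-1}(p)$ sits), and such an $F$ satisfies $a(F,X)=a(F,\widebar{X})+v_F(E)\geq a(F,\widebar{X})+1>1$ by the terminality of $\widebar{X}$ and the fact that $v_F(E)\geq 1$. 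So the task reduces to exhibiting $\mathrm{val}_E$ as a Nash valuation of $X$, which then furnishes $F=E$ as the witness for the conclusion.

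For this, I would first exploit the hypothesis on $\widebar{X}_{\mathrm{sing}}\subseteq E$. At each terminal cyclic quotient point $q$ of $\widebar{X}$, the toric result of Theorem~\ref{mainthm1} supplies Nash valuations of $(\widebar{X},q)$ coming from exceptional divisors of discrepancy $\leq 1$ over $\widebar{X}$. At each Gorenstein terminal point $q$ of $\widebar{X}$, one either invokes the present statement inductively (on an explicit complexity of $(\widebar{X},q)$ such as its Milnor number or embedding multiplicity) or falls back on the terminal-valuations theorem of~\cite{dFD16} applied locally. Either way, one obtains, at every $q\in\widebar{X}_{\mathrm{sing}}$, an irreducible family $C_q\subseteq\pi_{\widebar{X}}^{-1}(\widebar{X}_{\mathrm{sing}})$ of arcs through $q$ with prescribed order of contact with $E$. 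Assembling these families together with a generic family of arcs on $\widebar{X}$ that meet $E$ transversally at a smooth point of $\widebar{X}$ produces an irreducible subset of $\widebar{X}_\infty$ whose image under $\pi_\infty$ is an irreducible subset $W\subseteq\pi_X^{-1}(p)\subseteq\pi_X^{-1}(X_{\mathrm{sing}})$. Because generic arcs meet $E$ transversally and $a(E,X)=1$, the generic valuation of $W$ equals $\mathrm{val}_E$, placing $W$ inside a candidate Nash component of $X$ associated with $\mathrm{val}_E$.

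The crucial remaining step is maximality: showing that $W$ is an actual irreducible component of $\pi_X^{-1}(X_{\mathrm{sing}})$, not just contained in one. This is a codimension computation on $X_\infty$ combining the contact-loci formula of~\cite{EM09}, the toric dimension estimate of~\cite{IK03} at the cyclic quotient points of $\widebar{X}$, and an inductive dimension bound at the Gorenstein terminal points. I expect this maximality step to be the main obstacle. Each inductive extraction at a Gorenstein terminal point of $\widebar{X}$ may introduce new terminal singularities whose Gorenstein indices can exceed $1$, so the induction must be organized around an explicit complexity that strictly decreases under the extractions from~\cite{Hay99,Hay00}, and the dimension bookkeeping at each stage must stay tight enough to conclude that no strictly larger irreducible family inside $\pi_X^{-1}(X_{\mathrm{sing}})$ dominates $W$.
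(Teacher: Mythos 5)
There is a fundamental mismatch: the statement you were asked to prove is Hayakawa's \emph{existence} theorem --- for a germ of 3-fold terminal singularity $(X,p)$ of Gorenstein index $m>1$, there exists a divisorial contraction $\pi:\widebar{X}\to X$ with minimal discrepancy $a(E,X)=\tfrac{1}{m}$ in the sense of Definition~\ref{def:HayDivCon}. Your proposal never proves this; it opens with ``given the divisorial contraction $\pi:\widebar{X}\to X$ with $a(E,X)=1$,'' i.e.\ it assumes the very object whose existence is the content of the theorem, and then spends its effort on an entirely different question (showing that $\mathrm{val}_E$ or some related divisor induces a Nash valuation), which is the subject of Theorem~\ref{mainthmcax2} and Theorem~\ref{ch8prop}, not of this statement. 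The discrepancy value is also wrong for the setting at hand: with index $m\geq 2$ the minimal discrepancy is $\tfrac{1}{m}<1$, so working with $a(E,X)=1$ signals that you have the Gorenstein case in mind rather than the index-$>1$ case the statement concerns. Arc spaces, Nash components, and contact-loci codimension estimates play no role in establishing existence of the contraction.

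The actual proof (in \cite{Hay99,Hay00}, which the paper cites rather than reproves) is a purely birational-geometric, classification-driven construction: one runs through Mori's list (Theorem~\ref{morilist}) of index-$>1$ terminal singularities --- types cA/m, cAx/2, cAx/4, cD/2, cD/3, cE/2 --- and for each normal form, after suitable analytic changes of coordinates, exhibits an explicit weighted blowing-up $\pi_{\sigma}:\widebar{X}\to X$ with a weight $\sigma=\tfrac{1}{m}(a,b,c,d)$ chosen so that (i) the exceptional locus is an irreducible divisor $E$ with $K_{\widebar{X}/X}=\tfrac{1}{m}E$, and (ii) $\widebar{X}$ has only terminal singularities, which is verified chart by chart on the covering $U_1,\dots,U_4$ using the descriptions recalled in the paper's section on weighted blowing-ups (e.g.\ the weights $\tfrac{1}{2}(\tfrac{\tau_0}{2},\tfrac{\tau_0}{2}+1,1,1)$ and $\tfrac{1}{2}(\tfrac{\tau_0}{2}+2,\tfrac{\tau_0}{2}+1,1,1)$ used in the cAx/2 case are instances of this). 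If you want to prove the statement, that explicit case analysis --- choice of weight, computation of $\mathrm{wt}_{\sigma}$ of the defining equation to get the discrepancy, and terminality check of the charts --- is what must be supplied; no argument about Nash valuations can substitute for it.
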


\subsection{The Arc Space of a Variety}
We briefly recall some important concepts and notation about the arc space $X_{\infty}$ of a variety $X$. For more details, see \cite{IK03, CAS18, dFD16,dFarcsp18}.  

J.F.\ Nash first introduces the concept of arc spaces to study singularities of a variety $X$ in \cite{Nash95}. Generally speaking, an arc on a variety $X$ is a formal analytic curve in $X$, and the arc space $X_{\infty}$ is the space which parametrizes all formal analytic curves in $X$. 

Let $X$ be a scheme of finite-type over $\mathbb{C}$, and let $\mathbb{K}\supseteq\mathbb{C}$ be a field extension of $\mathbb{C}$. A $\mathbb{K}$-\emph{arc} in $X$ is a morphism $\alpha:\mathrm{Spec}(\mathbb{K}[\![t]\!])\to X$. 
In many scenarios, we briefly say an \emph{arc} of $X$ rather than a $\mathbb{K}$-arc if it is unnecessary to specify the base $\mathbb{K}$. We denote the image of the closed point of $\mathrm{Spec}(\mathbb{K}[\![t]\!])$ by $\alpha(0)$, and the image of the generic point of $\mathrm{Spec}(\mathbb{K}[\![t]\!])$ by $\alpha(\eta)$. 

For $m\in\mathbb{Z}_{\geq0}$, the functor
\[
X_{m}(-):=\mathrm{Hom}_{\mathbb{C}\text{-}\mathrm{Sch}}(\mathrm{Spec}(-[t]/(t)^{m+1}),X):(\mathbb{C}\text{-}\mathrm{Algebras })\to(\mathrm{Sets})
\]is representable by a scheme of finite-type over $\mathbb{C}$. The representative is denoted by $X_{m}$, and it is called the \emph{m-th jet scheme} of $X$. For a $\mathbb{C}$-algebra $A$ and $m,n\in\mathbb{Z}_{\geq 0}$ with $m>n$, the natural quotient maps of algebras $A[t]/(t^{m+1})\to A[t]/(t^{n+1})$ induce the affine morphisms $\pi_{m,n}:X_{m}\to X_{n}$, called the \emph{truncation morphisms} of jet schemes. $\{(X_{m},\pi_{m,n})\}$ forms an inverse system of schemes over $\mathbb{C}$.    

The \emph{arc space} $X_{\infty}$ of $X$ is defined as the inverse limit of the jet schemes of $X$.
\[
X_{\infty}:=\varprojlim X_{m}.
\]For a field extension $\mathbb{K}\supseteq\mathbb{C}$, the set of $\mathbb{K}$-valued points of $X_{\infty}$
\[
X_{\infty}(\mathbb{K}):=\mathrm{Hom}_{\mathbb{C}\text{-}\mathrm{Sch}}(\mathrm{Spec}(\mathbb{K}[\![t]\!]),X)
\]
is the set of $\mathbb{K}$-arcs of $X$.

The $X_{\infty}$ is equipped with a canonical projection $\pi_{\infty,m}:X_{\infty}\to X_{m}$. For $m=0$, we obtain a morphism $\pi_{X}:=\pi_{\infty,0}$.
\begin{equation*}
\begin{split}
\pi_{X}:X_{\infty}&\longrightarrow X \\
\alpha&\longmapsto \alpha(0).
\end{split}
\end{equation*}

\subsection{The Nash Problem}
We will formulate the \emph{Nash problem}, which is an interesting observation proposed by J.\ F.\ Nash in \cite{Nash95}. From his point of view, the set of all arcs passing through the singular locus $X_{\mathrm{sing}}$ of $X$ (or $\pi^{-1}_{X}(X_{\mathrm{sing}})$) can help us study resolutions of singularities of $X$. More precisely, his idea provides an intrinsic approach to detect the divisors which appear on all smooth models over $X$.

\begin{definition}[Essential Divisors\ /\ Essential Valuations]
Let $E$ be an exceptional prime divisor over $X$. $E$ is called \emph{essential} over $X$ for any resolution $f:Y\to X$, the center of $\mathrm{val}_{E}$ in $Y$ is an irreducible component of $f^{-1}(X_{\mathrm{sing}})$. We say $E$ \emph{appears} in $Y$ if the center of $\mathrm{val}_{E}$ is a divisor in $Y$. The divisorial valuation induced by an essential exceptional prime divisor of $X$ is called an \emph{essential valuation} of $X$. 
\end{definition}

Let $X$ be a topological space, and let $x,y\in X$. We say $y$ is a \emph{generalization} of $x$ (or $x$ is a \emph{specialization} of $y$) if $x\in\overline{\{y\}}$ in $X$, where $\overline{\{ \ \}}$ denotes the closure in $X$.

For the remainder of this section, we let $\mathbb{K}$ denote a field extension of $\mathbb{C}$. 
\begin{proposition}[\cite{IK03}]\label{Nashgen}
Let $X$ be a variety, and let $\alpha: \mathrm{Spec}(\mathbb{K}[\![t]\!])\to X$ be a $\mathbb{K}$-arc such that $\alpha(0)\in X_{\mathrm{sing}}$. Then $\alpha$ has a generalization $\beta\in X_{\infty}$ such that $\beta(0)\in X_{\mathrm{sing}}$ and $\beta(\eta)\notin X_{\mathrm{sing}}$.   
\end{proposition}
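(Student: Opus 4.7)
The plan is to construct the generalization $\beta$ as the push-forward, under a resolution of singularities $f : Y \to X$, of the generic point of a suitable irreducible subset of $Y_\infty$; the smoothness of $Y$ will automatically force the generic arc there to sweep out $Y$, so the push-forward will land its generic point outside $X_{\mathrm{sing}}$. First, the easy case: if $\alpha(\eta) \notin X_{\mathrm{sing}}$ take $\beta := \alpha$, so assume $\alpha$ factors through $X_{\mathrm{sing}}$, i.e., $\alpha \in (X_{\mathrm{sing}})_\infty \subseteq X_\infty$.

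Fix a resolution $f : Y \to X$ with reduced exceptional divisor $E = f^{-1}(X_{\mathrm{sing}}) = E_1 \cup \cdots \cup E_r$. Because $Y$ is smooth, each contact locus $\pi_Y^{-1}(E_i) \subseteq Y_\infty$ is irreducible—at jet-level $m \geq 1$ it is a Zariski-locally trivial $\mathbb{A}^{m \dim Y}$-bundle over $E_i$. Let $\tilde{\beta}_i$ denote its generic point, regarded as an arc $\mathrm{Spec}(K_i[\![t]\!]) \to Y$ over the residue field $K_i$. Then $\tilde{\beta}_i(0) = \eta_{E_i}$ by construction, and $\tilde{\beta}_i(\eta) = \eta_Y$, because in local coordinates at $\eta_{E_i}$ the positive-order Taylor coefficients of $\tilde{\beta}_i$ are algebraically independent over $\mathbb{C}$, forcing the induced ring map $\mathcal{O}_Y \to K_i((t))$ to be injective. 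Setting $\beta_i := f_\infty(\tilde{\beta}_i)$, continuity of $f$ yields $\beta_i(0) = f(\eta_{E_i}) \in X_{\mathrm{sing}}$ and $\beta_i(\eta) = f(\eta_Y) = \eta_X \notin X_{\mathrm{sing}}$, so every $\beta_i$ has the desired endpoints.

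To select the index $i$ for which $\beta_i$ generalizes $\alpha$, lift $\alpha$ through $f$: by the valuative criterion of properness, possibly after enlarging the base field, $\alpha$ admits a lift $\tilde{\alpha} \in Y_\infty$ with $\tilde{\alpha}(0) \in f^{-1}(\alpha(0)) \subseteq E$, so $\tilde{\alpha} \in \pi_Y^{-1}(E_i)$ for some $i$. This makes $\tilde{\alpha}$ a specialization of $\tilde{\beta}_i$ in $Y_\infty$, and continuity of $f_\infty$ propagates the relation to $\alpha \in \overline{\{\beta_i\}}$ in $X_\infty$; take $\beta := \beta_i$. The main technical obstacle is the identification $\tilde{\beta}_i(\eta) = \eta_Y$, which must be extracted from the inverse-limit description of $\pi_Y^{-1}(E_i) = \varprojlim_m \pi_{Y,m}^{-1}(E_i)$ and the algebraic independence of the universal Taylor coefficients; a subsidiary subtlety is that the lifting step may enlarge the residue field, but since $\beta$ is required only to be a point of the scheme $X_\infty$ (not a specific $\mathbb{K}$-valued arc), this does not affect the specialization relation.
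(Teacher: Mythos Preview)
Your approach—pass to a resolution $f:Y\to X$, exploit the irreducibility of $\pi_Y^{-1}(E_i)$ on the smooth model, and push the generic arc $\tilde\beta_i$ back down—is exactly the strategy behind the references the paper cites (the paper itself gives no argument, only pointers to \cite{IK03} and \cite{CAS18}). The irreducibility of $\pi_Y^{-1}(E_i)$ via the jet-bundle description and the fatness of its generic point are handled correctly.

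The one genuine soft spot is the lifting of $\alpha$ in the thin case. The valuative criterion only produces $\tilde\alpha$ once you already have a lift of the \emph{generic point} $\alpha(\eta)$ to $Y$; when $\alpha(\eta)\in X_{\mathrm{sing}}$ such a lift lives in the fiber $f^{-1}(\alpha(\eta))$, and getting a $\mathbb K'((t))$-valued point there requires an extension of $\mathbb K((t))$, not merely of $\mathbb K$. That extension may be ramified (already for a finite surjection like $y\mapsto y^2$ the arc $t$ on the base lifts only after $t\mapsto t^{1/2}$), and a ramified lift does not sit over the scheme point $\alpha$ in $X_\infty$. So ``enlarge $\mathbb K$ and apply the valuative criterion'' does not, as stated, yield $\tilde\alpha\in Y_\infty$ with $f_\infty(\tilde\alpha)=\alpha$.

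The standard repair is to replace exact lifting by \emph{dominance}. Let $W\subseteq X_{\mathrm{sing}}$ be the component with $\alpha\in W_\infty$ and choose $E_i$ dominating $W$; since $X$ is normal one has $\dim E_i>\dim W$. Cutting $E_i$ by general hyperplanes gives $E_i'\subseteq E_i$ with $E_i'\to W$ generically finite, hence generically \'etale in characteristic~$0$. Over the \'etale locus Hensel's lemma (now legitimately ``after enlarging the base field'' to split the fibre over $\alpha(0)$) lifts every arc, so $(E_i')_\infty\to W_\infty$ has dense image; a fortiori $(f|_{E_i})_\infty:(E_i)_\infty\to W_\infty$ is dominant. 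Combined with Kolchin's irreducibility of $W_\infty$ (valid by induction on dimension, and this is where characteristic~$0$ enters, as the paper's remark notes), one gets
\[
\alpha\in W_\infty=\overline{(f|_{E_i})_\infty((E_i)_\infty)}\subseteq\overline{f_\infty(\pi_Y^{-1}(E_i))}\subseteq\overline{\{\beta_i\}},
\]
and your conclusion follows as written.
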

\begin{proof}
See \cite[Lemma~2.12]{IK03} or \cite[Chapter 3, Proposition~4.3.2]{CAS18}.    
\end{proof}

\begin{remark}
The proof of this proposition actually needs the condition that $X$ is over a field $k$ of characteristic zero. 
\end{remark}
\begin{proposition}[\cite{IK03}]
$\pi^{-1}_{X}(X_{\mathrm{sing}})$ has the following decomposition
\[
\pi^{-1}_{X}(X_{\mathrm{sing}})=\bigcup_{i\in I}C_{i}, 
\]where the $C_{i}$ are the irreducible components of $\pi^{-1}_{X}(X_{\mathrm{sing}})$, and each $C_{i}$ contains arcs $\alpha\in X_{\infty}$ such that $\alpha(\eta)\notin X_{\mathrm{sing}}$.
\end{proposition}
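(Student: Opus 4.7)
The plan is to handle the two halves of the statement separately. The decomposition $\pi_X^{-1}(X_{\mathrm{sing}}) = \bigcup_{i\in I} C_i$ is a purely topological fact: $\pi_X$ is continuous and $X_{\mathrm{sing}} \subseteq X$ is closed, so $\pi_X^{-1}(X_{\mathrm{sing}})$ is a closed subset of $X_\infty$ and admits a decomposition into its irreducible components, which we label $\{C_i\}_{i\in I}$.

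The real content is the second assertion, and my plan is to exhibit the required arc as the scheme-theoretic generic point of $C_i$ itself. Fix $i \in I$, let $\eta_i$ denote the generic point of $C_i$, and interpret it as a $\mathbb{K}$-arc $\alpha_i \in X_\infty(\mathbb{K})$ for $\mathbb{K} := \kappa(\eta_i)$, via the correspondence between $\mathbb{K}$-points of $X_\infty$ and morphisms $\mathrm{Spec}(\mathbb{K}[\![t]\!]) \to X$. Since $\alpha_i \in C_i \subseteq \pi_X^{-1}(X_{\mathrm{sing}})$, we have $\alpha_i(0) \in X_{\mathrm{sing}}$, so Proposition~\ref{Nashgen} applies and yields a generalization $\beta \in X_\infty$ of $\alpha_i$ with $\beta(0) \in X_{\mathrm{sing}}$ and $\beta(\eta) \notin X_{\mathrm{sing}}$.

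To conclude, I would verify that $\beta$ in fact lies in $C_i$. Because $\beta(0) \in X_{\mathrm{sing}}$, we have $\beta \in \pi_X^{-1}(X_{\mathrm{sing}})$, so $\overline{\{\beta\}}$ is an irreducible closed subset of $\pi_X^{-1}(X_{\mathrm{sing}})$. Since $\beta$ is a generalization of $\alpha_i$, the inclusion $\alpha_i \in \overline{\{\beta\}}$ gives $C_i = \overline{\{\alpha_i\}} \subseteq \overline{\{\beta\}}$, and maximality of $C_i$ as an irreducible closed subset of $\pi_X^{-1}(X_{\mathrm{sing}})$ forces $C_i = \overline{\{\beta\}}$. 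In particular $\beta \in C_i$ and $\beta(\eta) \notin X_{\mathrm{sing}}$, as required.

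The only mild subtlety—rather than a true obstacle—is that $X_\infty$ is not of finite type over $\mathbb{C}$, so one must be comfortable handling generic points on a non-Noetherian scheme. However, $X_\infty$ is a genuine scheme (an inverse limit of affine schemes), each irreducible closed subset carries a well-defined scheme-theoretic generic point, and Proposition~\ref{Nashgen} does all the real work of moving the arc off the singular locus at the generic fiber.
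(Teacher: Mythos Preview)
Your proposal is correct and matches the paper's approach essentially verbatim. The paper's formal proof is just a citation to \cite[Lemma~2.12]{IK03}, but the exact argument you give---take the generic point $\alpha_i$ of $C_i$, apply Proposition~\ref{Nashgen} to produce a generalization $\tilde\alpha_i$ with $\tilde\alpha_i(\eta)\notin X_{\mathrm{sing}}$, and use maximality of $C_i$ to conclude $\tilde\alpha_i=\alpha_i$---is spelled out by the paper in the paragraph immediately following the definition of Nash components, when it constructs $\mathrm{val}_{C_i}$.
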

\begin{proof}
See \cite[Lemma 2.12]{IK03}.   
\end{proof}

\begin{definition}[Nash Components]
These $C_{i}$ are called the \emph{Nash components} of $X$.    
\end{definition}

A $\mathbb{K}$-arc $\alpha:\mathrm{Spec}(\mathbb{K}[\![t]\!])\to X$ induces a discrete semi-valuation 
\begin{equation*}
\begin{split}
\mathrm{val}_{\alpha}:\mathcal{O}_{X,\pi_{X}(\alpha)}&\to\mathbb{Z} \\
f&\mapsto \mathrm{ord}_{t}[f(\alpha(t))]
\end{split}
\end{equation*}
on the local ring $\mathcal{O}_{X,\pi_{X}(\alpha)}$, defined by taking composition with the $t$-adic valuation. 

In general, this semi-valuation cannot be extended to a valuation defined on the function field $\mathbb{C}(X)$. It will be a valuation if $\mathbb{C}(X)\subseteq \mathbb{K}(\!(t)\!)$ is a field extension, which is the same as the image of $\alpha$ being dense in $X$. This leads to the concept of \emph{fat arcs}.

\begin{definition}[Fat Arcs]
An arc $\alpha:\mathrm{Spec}(\mathbb{K}[\![t]\!])\to X$ is called \emph{thin} (or \emph{algebraically thin}) if the image of $\alpha$ is contained in a proper closed subscheme of $X$. Otherwise, $\alpha$ is called \emph{fat} (or \emph{algebraically fat}). Equivalently, $\alpha$ is fat if and only if $\mathrm{val}_{\alpha}$ is a valuation on $\mathbb{C}(X)$.
\end{definition} 
Now we define the semi-valuations of $X$ induced by the Nash components of $X$, which will be shown to actually be valuations on $X$. Let $C_{i}$ be a Nash component, and let $\alpha_{i}\in C_{i}$ be the generic point. By Proposition \ref{Nashgen}, we have a generalization $\tilde{\alpha}_{i}\in \pi_{X}^{-1}(X_{\mathrm{sing}})$ such that $\tilde{\alpha}_{i}(\eta)\notin X_{\mathrm{sing}}$ and $\tilde{\alpha_{i}}(0)\in X_{\mathrm{sing}}$. Hence, we have $C_{i}=\widebar{\{\alpha_{i}\}}\subseteq\overline{\{\tilde{\alpha}_{i}\}}$. Since $C_{i}$ is an irreducible component of $\pi_{X}^{-1}(X_{\mathrm{sing}})$, we get that $\tilde{\alpha}_{i}$ is also the generic point of $C_{i}$ (i.e., $\tilde{\alpha}_{i}=\alpha_{i}$, and this also implies $C_{i}\nsubseteq\pi_{X}^{-1}(X_{\mathrm{sing}})$). 
We obtain the semi-valuation $\mathrm{val}_{C_{i}}:=\mathrm{val}_{\alpha_{i}}$ of $X$.

We now explain how arcs on $X$ with $\alpha(0)\in X_{\mathrm{sing}}$ can be lifted to arcs on a resolution of $X$.
\begin{lemma}
Let $f:Y\to X$ be a resolution of singularities of $X$, and assume that it induces an isomorphism between $X\setminus Z$ and $Y\setminus f^{-1}(Z)$ for some proper closed subset $Z\subsetneqq X$.  
Let $C_{i}^{\circ}$ denote the open subset of a given Nash component $C_{i}$ consisting of arcs $\alpha:Spec(\mathbb{K}[\![t]\!])\to X$ such that $\alpha(\eta)\notin Z$. Then for every $\alpha\in C_{i}^{\circ}$, the arc $\alpha$ can be uniquely lifted to an arc $\tilde{\alpha}:\mathrm{Spec}(\mathbb{K}[\![t]\!])\to Y$.
\end{lemma}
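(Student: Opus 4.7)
The plan is to apply the valuative criterion of properness to the resolution $f:Y\to X$, exploiting that $\mathrm{Spec}(\mathbb{K}[\![t]\!])$ is the spectrum of a DVR with fraction field $\mathbb{K}(\!(t)\!)$. Because $f$ is a resolution of singularities, it is proper, and because $\alpha \in C_i^\circ$ we have $\alpha(\eta)\notin Z$. These two facts together will produce the lift and its uniqueness in an essentially formal way.

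First, I would restrict $\alpha$ to the generic point of $\mathrm{Spec}(\mathbb{K}[\![t]\!])$, obtaining a morphism $\alpha_\eta:\mathrm{Spec}(\mathbb{K}(\!(t)\!))\to X$ whose image is the point $\alpha(\eta)$, which by hypothesis lies in $X\setminus Z$. Since $f$ restricts to an isomorphism $Y\setminus f^{-1}(Z)\xrightarrow{\sim} X\setminus Z$, the morphism $\alpha_\eta$ lifts uniquely to a morphism $\tilde{\alpha}_\eta:\mathrm{Spec}(\mathbb{K}(\!(t)\!))\to Y\setminus f^{-1}(Z)$, which I then compose with the open immersion into $Y$.

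Next, I would assemble the solid commutative square with $\tilde{\alpha}_\eta$ on top, $\alpha$ on the bottom, the inclusion $\mathrm{Spec}(\mathbb{K}(\!(t)\!))\hookrightarrow \mathrm{Spec}(\mathbb{K}[\![t]\!])$ on the left, and $f$ on the right. The valuative criterion of properness, applied to the proper morphism $f$ and the DVR $\mathbb{K}[\![t]\!]$, then produces a unique diagonal filler $\tilde{\alpha}:\mathrm{Spec}(\mathbb{K}[\![t]\!])\to Y$ with $f\circ\tilde{\alpha}=\alpha$ and $\tilde{\alpha}|_{\mathrm{Spec}(\mathbb{K}(\!(t)\!))}=\tilde{\alpha}_\eta$. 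For uniqueness of $\tilde{\alpha}$ as a lift of $\alpha$ tout court (not just as a filler of the square), I would observe that any lift of $\alpha$ must, on the generic point, land in $Y\setminus f^{-1}(Z)$ since its image under $f$ is $\alpha(\eta)\notin Z$, and there it must coincide with $\tilde{\alpha}_\eta$ because $f$ is an isomorphism on that open set; the valuative criterion then forces the agreement on all of $\mathrm{Spec}(\mathbb{K}[\![t]\!])$.

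There is no serious obstacle here, since the argument is a direct invocation of the valuative criterion; the only step that requires any care is verifying that $\alpha_\eta$ factors through $X\setminus Z$, which is exactly the content of the hypothesis $\alpha\in C_i^\circ$. The properness of $f$ is needed in order to rule out the possibility that the lift might ``run off to infinity'' in a fiber of $f$, and this is automatic for a resolution in the usual sense.
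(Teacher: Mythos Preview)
Your proposal is correct and follows essentially the same approach as the paper: lift the generic point using the isomorphism $Y\setminus f^{-1}(Z)\cong X\setminus Z$, then apply the valuative criterion of properness to the proper map $f$ and the DVR $\mathbb{K}[\![t]\!]$ to fill in the diagonal. Your write-up is in fact slightly more careful than the paper's about the uniqueness of the lift as a lift of $\alpha$ (rather than just as a filler of the fixed square), but the argument is the same.
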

\begin{proof}
Notice that $f$ is an isomorphism outside $Z$, so we have the following commutative diagram.
\begin{center}
\begin{tikzcd}
\mathrm{Spec}(\mathbb{K}(\!( t )\!)) \arrow[r] \arrow[d] & Y \arrow[d, "f"] \\
\mathrm{Spec}(\mathbb{K}[\![t]\!]) \arrow[r, "\alpha"] & X 
\end{tikzcd}
    
\end{center}
Here the downward arrow on the left is induced by the canonical inclusion to the fraction field $\mathbb{K}[\![t]\!]\hookrightarrow\mathbb{K}(\!(t)\!)$, and the top arrow $\mathrm{Spec}(\mathbb{K}(\!(t)\!))\to Y$ is obtained by the condition $\alpha(\eta)\notin Z$.
Notice that $f$ is a proper morphism and $\mathbb{K}[\![t]\!]$ is a valuation ring, so we obtain a unique lifting $\tilde{\alpha}:\mathrm{Spec}(\mathbb{K}[\![t]\!])\to Y$ by the valuative criterion of properness.
\[
\begin{tikzcd}[baseline=-0.8cm]
\mathrm{Spec}(\mathbb{K}(\!( t )\!)) \arrow[r] \arrow[d] & Y \arrow[d, "f"] \\
\mathrm{Spec}(\mathbb{K}[\![t]\!])\arrow[ur, dashed, "\exists\tilde{\alpha}"] \arrow[r, "\alpha"] & X 
\end{tikzcd}
\qedhere
\]
\end{proof}
\begin{remark}
If $f:Y\to X$ is a resolution such that $Y\setminus f^{-1}(X_{\mathrm{sing}})$ is isomorphic to $X\setminus X_{\mathrm{sing}}$ via $f$, we have $Z=X_{\mathrm{sing}}$.    
\end{remark}
\begin{theorem}[\cite{Nash95,IK03,CAS18}]
Let $X$ be a singular variety with singular locus $X_{\mathrm{sing}}$, and let $C$ be a Nash component of $X$. Then there is a unique essential exceptional prime divisor $E$ over $X$ such that
\[
\mathrm{val}_{C}=\mathrm{val}_{E}.
\] In particular, $\mathrm{val}_{C}$ is a valuation and it is essential. Also, there are only finitely many Nash components of $X$.
\end{theorem}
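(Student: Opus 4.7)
The plan is to fix a log resolution $f:Y\to X$ with $f^{-1}(X_{\mathrm{sing}})$ a divisor having (finitely many) irreducible components $E_{1},\ldots,E_{r}$, and to transfer the analysis of $\pi_{X}^{-1}(X_{\mathrm{sing}})$ to the smooth model $Y$, where irreducibility of arc preimages is immediate. The guiding principle is that on a smooth variety, the canonical map $\pi_{Y}:Y_{\infty}\to Y$ is a pro-trivial $\mathbb{A}^{\dim Y}$-bundle Zariski-locally, so $\pi_{Y}^{-1}(Z)$ is irreducible whenever $Z\subseteq Y$ is irreducible.

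First, given a Nash component $C$ with generic arc $\alpha_{C}$, I apply the lifting lemma proved just above to the open dense locus $C^{\circ}=\{\alpha\in C:\alpha(\eta)\notin X_{\mathrm{sing}}\}$ (taking $Z=X_{\mathrm{sing}}$). The induced map $C^{\circ}\to Y_{\infty}$ is the restriction of the functorial morphism $f_{\infty}:Y_{\infty}\to X_{\infty}$, and is a bijection between fat $X$-arcs avoiding $X_{\mathrm{sing}}$ and fat $Y$-arcs avoiding $f^{-1}(X_{\mathrm{sing}})$ (by the valuative criterion, as $f$ is birational). Let $\widetilde{C^{\circ}}\subseteq Y_{\infty}$ denote the image. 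Since $\widetilde{C^{\circ}}\subseteq\bigcup_{j=1}^{r}\pi_{Y}^{-1}(E_{j})$, each summand is irreducible by the remark above, and $\widetilde{C^{\circ}}$ is itself irreducible (being the homeomorphic image of the open subset $C^{\circ}$ of the irreducible $C$), we conclude that $\widetilde{C^{\circ}}$ is contained in a single $\pi_{Y}^{-1}(E_{j})$. Write $E:=E_{j}$.

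Next, I identify the valuations. The generic arc of $\pi_{Y}^{-1}(E)$ is the universal arc centered at the generic point of $E$ and induces on $\mathbb{C}(Y)=\mathbb{C}(X)$ exactly the divisorial valuation $\mathrm{val}_{E}$. Since $C$ is a maximal irreducible subset of $\pi_{X}^{-1}(X_{\mathrm{sing}})$, the inclusion $\widetilde{C^{\circ}}\subseteq\pi_{Y}^{-1}(E)$ forces, after taking closures via the bijection on fat loci, the equality $C=\overline{f_{\infty}(\pi_{Y}^{-1}(E))}$ in $X_{\infty}$. Matching generic points then gives $\mathrm{val}_{C}=\mathrm{val}_{E}$, which in particular shows $\mathrm{val}_{C}$ is a genuine valuation on $\mathbb{C}(X)$.

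For essentiality and uniqueness, I run the same construction on an arbitrary resolution $f':Y'\to X$: this produces an exceptional prime divisor $E'\subseteq Y'$ with $\mathrm{val}_{C}=\mathrm{val}_{E'}$, so the divisorial valuation $\mathrm{val}_{E}$ has its center equal to a component of $(f')^{-1}(X_{\mathrm{sing}})$ on every resolution. Hence $E$ is essential, and its birational equivalence class (and so its model on any smooth $Y$) is uniquely determined by the discrete divisorial valuation $\mathrm{val}_{C}$. Finiteness is now automatic: the assignment $C\mapsto E$ is injective into the finite set $\{E_{1},\ldots,E_{r}\}$, so there are at most $r$ Nash components.

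The main obstacle is step one: verifying that the set-theoretic lifting of $C^{\circ}$ is actually cut out (as a constructible/scheme-theoretic image) by a morphism of ind-schemes $f_{\infty}$ and that it restricts to a homeomorphism on the loci of fat arcs avoiding the relevant exceptional sets. Once this functorial matching is in hand, the smoothness of $Y$ collapses the combinatorics: irreducibility of $\pi_{Y}^{-1}(E_{j})$ forces $\widetilde{C^{\circ}}$ into a single component, and everything else is formal bookkeeping with divisorial valuations.
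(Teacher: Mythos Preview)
The paper does not supply its own proof of this theorem; it is stated with attribution to \cite{Nash95,IK03,CAS18} and used as a black box. Your sketch is precisely the standard argument from those references: lift the Nash component to a log resolution via the valuative-criterion lemma proved just above, use irreducibility of $\pi_{Y}^{-1}(E_{j})$ on the smooth model to pin down a unique $E$, and read off finiteness from the finite exceptional set. So in spirit you are reproducing the cited proof, not diverging from anything in the paper.

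One imprecision worth fixing: in your essentiality paragraph you write that on an arbitrary resolution $f':Y'\to X$ the construction ``produces an exceptional prime divisor $E'\subseteq Y'$ with $\mathrm{val}_{C}=\mathrm{val}_{E'}$''. On a resolution that is not a log resolution, $(f')^{-1}(X_{\mathrm{sing}})$ need not be purely divisorial, so the irreducible component $W$ you land in may have codimension $\geq 2$; in that case the generic arc of $\pi_{Y'}^{-1}(W)$ computes $\mathrm{ord}_{W}$ rather than a divisorial valuation realised on $Y'$. The correct statement is simply that the center of $\mathrm{val}_{C}$ on $Y'$ is the generic point of $W$, hence an irreducible component of $(f')^{-1}(X_{\mathrm{sing}})$, which is exactly the definition of essential. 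You already know $\mathrm{val}_{C}=\mathrm{val}_{E}$ is divisorial from the first (log) resolution, so there is no need to claim $W$ is a divisor. With that adjustment the argument is complete; note also that for the identification of generic points you only need continuity of $f_{\infty}$, not the full homeomorphism on fat loci that you flag as an obstacle.
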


\begin{definition}[Nash Valuations of $X$]
Let $C_{1},\dots,C_{\ell}$ be the Nash components of $X$, then the induced valuations $\mathrm{val}_{C_{1}},\dots,\mathrm{val}_{C_{\ell}}$ are called the \emph{Nash valuations} of $X$.    
\end{definition}

In other words, we have the following map
\begin{equation*}
\begin{split}
\mathcal{N}_{X}:\{\text{Nash components of }X\}&\to\{\text{essential valuations of } X\}\\
C&\longmapsto\mathrm{val}_{C}=\mathrm{val}_{E}
\end{split}    
\end{equation*}
is injective. $\mathcal{N}_{X}$ is called the \emph{Nash map} of $X$. Therefore, we have
\[
\{\text{Nash valuations of }X\}\subseteq\{\text{essential valuations of }X\}.
\]
Consequently, we obtain an intrinsic method to identify some of the essential valuations of $X$ without requiring to consider all resolutions of singularities of $X$.
Now we can formulate \emph{the Nash Problem for $X$}:
\begin{center}
Understand the image of the Nash map $\mathcal{N}_{X}$.    
\end{center}
In particular, we are interested in that if every essential valuation of $X$ corresponds to a Nash valuation of $X$ (i.e., the surjectivity of the Nash map $\mathcal{N}_{X}$).

As mentioned in the introduction, it is known that $\mathcal{N}_{X}$ is surjective in dimensions 1 and 2 \cite{FdBPP12} and several interesting cases in
arbitrary dimensions \cite{IK03,PPP08,GPPD07,BLM25}, but the surjectivity fails in general starting in dimensions 3 \cite{dF13,IK03}. Furthermore, \cite{dFD16,LJR12} provide some characterizations for the image of $\mathcal{N}_{X}$. In particular, T.~de~Fernex and R. Docampo introduce the concept of \emph{terminal valuations} and prove that they belong to the image of $\mathcal{N}_{X}$ \cite{dFD16}.

Since there are no terminal valuations for $X$ having only terminal singularities \cite{dFD16}, we need other approaches to characterize their Nash valuations. In \cite{JK13}, J.~Kollár and J.~M.~Johnson provide some characterizations for singularities of type $cA$ in arbitrary dimensions. Recently, there is a result proven by H.-K.\ Chen \cite{Chen23} for 3-fold terminal singularities of type $cA/m$.
\begin{theorem}[{\cite[Theorem~1.1]{Chen23}}]
Let $(X,p)$ be a germ of 3-fold terminal singularity of type $cA/m$ for $m\in\mathbb{Z}_{>0}$, then there is an one-to-one correspondence
between Nash valuations of $X$ and exceptional prime divisors over $X$
whose discrepancies are less than or equal to one.    
\end{theorem}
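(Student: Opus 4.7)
The plan is to establish the bijection as an equality of sets of divisorial valuations on $\mathbb{C}(X)$, proving the two inclusions separately and invoking the injectivity of the Nash map $\mathcal{N}_X$ to conclude. Throughout I would exploit the explicit hyperquotient form $X=(xy+f(z,u)=0)\subseteq\mathbb{C}^{4}/\tfrac{1}{m}(a,-a,1,0)$, which makes both the exceptional divisors of small discrepancy and the arcs on $X$ accessible via the Newton polygon of $f(z,u)$ and the combinatorics of the ambient toric variety $\mathbb{C}^{4}/\tfrac{1}{m}(a,-a,1,0)$.

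For the inclusion $\{E \text{ exceptional over }X : a(E,X)\le 1\}\hookrightarrow\{\mathrm{val}_C : C \text{ Nash component of }X\}$, I would first identify the relevant exceptional divisors as those induced by weight vectors $w=(w_x,w_y,w_z,w_u)$ compatible with the $\mathbb{Z}_m$-action and adapted to the Newton polygon of $f(z,u)$, with the weights satisfying an explicit numerical inequality translating $a(E,X)\le 1$. For each such $E$, I would build a wedge on $X$: choose $z(t),u(t),x(t)$ generic of orders $w_z,w_u,w_x$ and solve for $y(t)$ via $xy=-f(z,u)$; the weight condition on $w$ is precisely what ensures the division is possible in $\mathbb{K}[\![t]\!]$ and produces a fat arc centered at $p$ with generic point in the smooth locus. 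Passing to the closure of this family in $X_\infty$ and checking maximality against $\pi_X^{-1}(X_{\mathrm{sing}})$ identifies it as a Nash component whose induced valuation is $\mathrm{val}_E$ by construction.

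For the reverse inclusion, starting from a Nash component $C$ with induced essential valuation $\mathrm{val}_E$, I would apply the arc-theoretic codimension estimates in the spirit of Ein--Lazarsfeld--Musta\c{t}\u{a} and Ishii--Koll\'ar: using a log resolution $f:Y\to X$ one compares $\mathrm{codim}(C,X_\infty)$ to an expression involving $a(E,X)$, and the constraint that the generic arc of $C$ satisfies $\alpha(0)=p$ while remaining fat forces $a(E,X)\le 1$. Combining this bound with the injectivity of the map $E\mapsto \mathrm{val}_E$ and the injectivity of $\mathcal{N}_X$ promotes the two inclusions to mutually inverse bijections between the finite sets in question, yielding the claimed one-to-one correspondence.

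The principal obstacle is the forward inclusion. The $\mathbb{Z}_m$-quotient forces one to work with $\mathbb{Z}_m$-semi-invariant arcs on the cover $\mathbb{C}^{4}$ that descend correctly to $X$, rather than building arcs directly on $X$; and showing that the constructed family is actually a full irreducible component of $\pi_X^{-1}(X_{\mathrm{sing}})$ (not merely contained in one) requires a delicate maximality argument that ultimately reduces to a combinatorial optimization on the Newton polygon of $f(z,u)$. Extracting the clean bound $a(E,X)\le 1$ on the other side likewise requires careful bookkeeping of how discrepancies transform under weighted blow-ups in the presence of the $\mathbb{Z}_m$-action, which is where the explicit classification in Theorem~\ref{morilist} of type cA/m singularities is genuinely needed.
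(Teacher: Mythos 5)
A point of context first: the paper you are writing into does not prove this statement at all --- it is quoted as background from \cite{Chen23} --- so there is no in-paper proof to compare against; what the paper does prove (Theorem \ref{mainthmcax2}, Theorem \ref{ch8prop}) uses a different mechanism, namely the minimal-valuation criterion of Proposition \ref{prop:mininash} together with explicit weighted blow-ups and the relation (\ref{eq:valeqn}). Judged on its own terms, your outline has the right general shape (two inclusions, the explicit equation $xy+f(z,u)=0$, weights adapted to the Newton polygon, arc families obtained by prescribing orders of $x,z,u$ and solving for $y$), which is indeed the flavor of the arguments of Johnson--Koll\'ar and Chen, but both inclusions as sketched contain genuine gaps.

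In the forward direction, everything rests on the ``maximality'' step you explicitly defer: showing that the closure of your constructed family is an entire irreducible component of $\pi_X^{-1}(X_{\mathrm{sing}})$, not merely contained in one. That is not a technical afterthought --- it is the substance of the theorem, and your sketch offers no mechanism for it; the known routes either prove that $\mathrm{val}_E$ is minimal for the order $\leq_X$ and invoke minimal-implies-Nash (the route this paper takes for cAx/2), or compare closures of divisorial/contact strata directly. In the reverse direction the argument as stated does not work: codimension formulas for maximal divisorial sets on a singular $X$ are governed by Mather(--Jacobian) discrepancies, not by $a(E,X)$, and no codimension estimate forces an irreducible component of $\pi_X^{-1}(X_{\mathrm{sing}})$ to satisfy $a(E,X)\leq 1$; being fat and centered at $p$ imposes no bound whatsoever on the discrepancy. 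This direction is exactly the delicate one --- it is why Conjectures A and B in the present paper are one-sided, and why even the toric Theorem \ref{mainthm} here proves only the inclusion ``discrepancy $\leq 1$ implies Nash.'' For cA/m it is obtained by showing that the arc family attached to any divisor of discrepancy greater than one lies in the closure of a family attached to a divisor of discrepancy at most one, an argument that uses the specific geometry of the equation $xy+f(z,u)=0$ and its weighted blow-ups rather than general codimension bounds; your appeal to Ein--Lazarsfeld--Musta\c{t}\u{a}-type estimates plus injectivity of the Nash map cannot substitute for it.
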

On the other hand, the example in \cite[Proposition 4.5]{IK03} also mentions that its Nash valuation of $X$ has minimal discrepancy, which is $1$. These observations serve as evidence that discrepancies might provide a useful criterion to identify whether a given valuation of $X$ is Nash or not.




\section{Nash Valuations for Toric Terminal Singularities}

It is natural to consider the valuations over toric terminal singularities as our initial case. In this section, we will prove that any exceptional prime divisor $E$ over a toric terminal singularity with discrepancy $a(E,X)\leq 1$ induces a Nash valuation. 

We recall some basic setups and characterizations for $\mathbb{Q}$-Gorenstein toric varieties with terminal singularities. Most of the material is from \cite{CLS11} and \cite{Mat02}. In this section, $N$ denotes the standard lattice and $M:=\mathrm{Hom}_{\mathbb{Z}}(N,\mathbb{Z})$ denotes its dual lattice.  
\begin{proposition}
	Let $\Delta$ be a fan in $N_{\mathbb{R}}:=N\otimes_{\mathbb{Z}}\mathbb{R}$, inducing an affine toric variety $X(\Delta)$.
\begin{itemize}
    \item[(1)] Its canonical divisor $K_{X(\Delta)}$ is $\mathbb{Q}$-Cartier (i.e., $X(\Delta)$ is $\mathbb{Q}$-Gorenstein) if and only if for any cone $\tau\in\Delta$, there exists an $m_{\tau}\in M_{\mathbb{Q}}:=M\otimes\mathbb{Q}$ such that $m_{\tau}(v_{i})=\langle m_{\tau},v_{i}\rangle=1$ for every primitive element $v_{i}\in N$ with $\langle v_{i} \rangle\in\Delta$ and $\langle v_{i} \rangle\subset\tau$.
    \item[(2)] Assume $K_{X(\Delta)}$ is $\mathbb{Q}$-Cartier(i.e., $X(\Delta)$ is $\mathbb{Q}$-Gorenstein). Then $X(\Delta)$ has terminal singularities if and only if
\[
\{w\in\tau\cap N|\langle m_{\tau},w\rangle\leq 1\}=\{0\}\cup\{v_{i}\in N| v_{i}\text{ is primitive},  \langle v_{i} \rangle\in\Delta \text{ and } \langle v_{i} \rangle\subset\tau \}
\]for every $\tau\in\Delta$. 
\end{itemize}
\end{proposition}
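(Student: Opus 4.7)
The plan is to treat the two parts separately, recognizing that both are standard characterizations in toric geometry obtained by pairing the torus-invariant description of $K_{X(\Delta)}$ with the piecewise-linear support function encoded by the collection $\{m_\tau\}_{\tau\in\Delta}$. Throughout, I would appeal to the description $K_{X(\Delta)}=-\sum_{\rho\in\Delta(1)}D_{\rho}$, where $D_{\rho}$ are the torus-invariant prime divisors associated with the rays of $\Delta$ (see \cite[Ch.~8]{CLS11}).

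For part (1), I would first reduce the statement to maximal cones, since for a non-maximal $\tau$ one obtains $m_\tau$ by restricting any $m_\sigma$ associated to a maximal cone $\sigma\supseteq\tau$. Then I would apply the toric criterion that a $\mathbb{Q}$-Weil divisor $\sum a_\rho D_\rho$ is $\mathbb{Q}$-Cartier on the affine chart $U_\sigma$ if and only if there exists $m_\sigma\in M_\mathbb{Q}$ with $\langle m_\sigma, v_\rho\rangle=-a_\rho$ for every ray $\rho\subset\sigma$. Applying this to $a_\rho=-1$ yields the condition $\langle m_\tau,v_i\rangle=1$ exactly as stated. Compatibility of the $m_\sigma$ across common faces (so that they really glue to a well-defined piecewise-linear function $\varphi_{K_{X(\Delta)}}$ on $|\Delta|$) is automatic once the rays are fixed and the equations are satisfied, and no further work is needed.

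For part (2), I would use the standard interpretation of discrepancies on toric varieties via star subdivisions. Any exceptional prime divisor $E$ over $X(\Delta)$ with toric valuation corresponds to a primitive lattice vector $w\in N$ lying in the relative interior of some cone $\tau\in\Delta$; conversely every such $w$ arises this way after further subdivision. The discrepancy is computed by pulling back $K_{X(\Delta)}$ using the piecewise-linear function from part (1), giving $a(E_w,X(\Delta))=\langle m_\tau,w\rangle-1$. Therefore terminality of $X(\Delta)$ is equivalent to $\langle m_\tau,w\rangle>1$ for every primitive $w\in\tau\cap N$ that is not a ray generator of $\tau$, which rewrites exactly as
\[
\{w\in\tau\cap N\mid \langle m_\tau,w\rangle\leq 1\}=\{0\}\cup\{v_i\in N\mid v_i\text{ primitive}, \langle v_i\rangle\in\Delta,\langle v_i\rangle\subset\tau\}.
\]
The non-primitive lattice vectors $w=kw'$ with $k\geq 2$ and $w'$ primitive are handled automatically, since if $\langle m_\tau,w'\rangle>1$ then $\langle m_\tau,w\rangle>1$ as well, so the condition on primitive vectors suffices to control the entire cone.

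The main obstacle is essentially bookkeeping rather than a conceptual difficulty: one must verify that every exceptional prime divisor over $X(\Delta)$ (not merely the toric-invariant ones) is captured by a lattice point in some $\tau$ for the terminality characterization to be an ``iff''. This uses the fact that among all divisorial valuations over a toric variety the toric ones already realize the infimum of discrepancies on each cone, so it is enough to test the terminal condition on toric exceptional divisors; I would cite the relevant statement in \cite[Ch.~11]{CLS11} or \cite{Mat02} rather than reproving it. Once this is in hand, both equivalences follow from the two displayed identifications above.
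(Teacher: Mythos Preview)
Your proposal is correct and follows the standard toric argument. However, the paper does not actually supply a proof of this proposition: it is stated as a recalled fact, with the surrounding text indicating that ``most of the material is from \cite{CLS11} and \cite{Mat02}.'' So there is nothing to compare against beyond noting that your sketch is exactly the argument one finds in those references (the $\mathbb{Q}$-Cartier criterion via linear support data on each cone, and the discrepancy formula $a(E_w,X(\Delta))=\langle m_\tau,w\rangle-1$ for toric exceptional divisors). Your remark about needing toric valuations to witness the infimum of discrepancies is the right point to flag, and citing \cite{CLS11} or \cite{Mat02} for it is appropriate and is precisely what the paper does implicitly.
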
 

Given a cone $\sigma\in\Delta$, we consider the following partial order on $\sigma\cap N$.
\begin{definition}
Let $u_{1},u_{2}\in\sigma\cap N$. We define $u_{1}\leq u_{2}$ if $u_{2}\in u_{1}+\sigma$. i.e., there exists a $u'\in\sigma\cap N$ such that $u_{2}=u_{1}+u'$. For a subset $A\subset\sigma\cap N$, an element $u\in A$ is minimal in $A$ if for every $v\in A$ with $v\leq u$ we have $u=v$.
\end{definition}

Denote $S_{\sigma}:=(\cup_{\tau\preceq\sigma,\tau:\text{singular}}\stackrel{\circ}{\tau})\cap N$ for a given cone $\sigma\in\Delta$, where $\tau\preceq\sigma$ means that $\tau$ is a face of $\sigma$. S.\ Ishii and J.\ Kollár describe the Nash toric valuations on a toric variety via this partial order $\leq$.

\begin{theorem}[\cite{IK03}]\label{IK03main}
Let $X(\Delta)$ be an affine toric variety in any dimension. Then its Nash map is bijective. Moreover, Nash valuations of $X(\Delta)$ (essential valuations of $X(\Delta)$) are in one-to-one correspondence to minimal elements in $S_{\sigma}$ for some cone $\sigma\in\Delta$. 
\end{theorem}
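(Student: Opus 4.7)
The plan is to exploit the torus action on $X(\Delta)$ to convert the combinatorics of irreducible components of $\pi_{X(\Delta)}^{-1}(X(\Delta)_{\mathrm{sing}})$ into the combinatorics of lattice points in $N$. The key dictionary, which I would set up first, is the correspondence between lattice points $v \in \sigma \cap N$ and \emph{monomial arcs} $\alpha_v^{\sigma}$ on the affine chart $U_\sigma$: the ring map $\mathbb{C}[\sigma^{\vee} \cap M] \to \mathbb{C}[\![t]\!]$ defined by $x^m \mapsto t^{\langle m, v \rangle}$ produces an arc whose induced valuation is the toric divisorial valuation $\mathrm{val}_v$. Checking where the endpoints land, I would show that the closed point $\alpha_v(0)$ lies in the torus orbit $O(\tau)$ of the unique face $\tau \preceq \sigma$ containing $v$ in its relative interior, while $\alpha_v(\eta)$ lies in the open torus orbit whenever $v$ is nonzero. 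Combined with the fact that $O(\tau) \subseteq X(\Delta)_{\mathrm{sing}}$ iff $\tau$ is singular, this gives $\alpha_v \in \pi_{X(\Delta)}^{-1}(X(\Delta)_{\mathrm{sing}})$ precisely when $v \in S_\sigma$.

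Next I would analyze the specialization order among monomial arcs: the claim to establish is that $\alpha_{v_2}$ is a specialization of $\alpha_{v_1}$ in $X(\Delta)_\infty$ iff $v_1 \leq v_2$ in the partial order on $\sigma \cap N$. One direction is concrete — writing $v_2 = v_1 + w$ with $w \in \sigma \cap N$ gives a one-parameter family of monomial arcs with general fiber $\alpha_{v_1}$ and special fiber $\alpha_{v_2}$ — and the other follows by comparing the ideals of the arcs in the coordinate ring of $U_{\sigma,\infty}$. Combined with the previous step, this already identifies the monomial arcs inside $\pi_{X(\Delta)}^{-1}(X(\Delta)_{\mathrm{sing}})$ that are \emph{maximal} under generalization (hence candidates for generic points of Nash components) with the minimal elements of $S_\sigma$, as $\sigma$ ranges over cones of $\Delta$.

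The main obstacle is promoting this to a statement about arbitrary arcs: I must show that every fat $\mathbb{K}$-arc $\alpha$ with $\alpha(0) \in X(\Delta)_{\mathrm{sing}}$ and $\alpha(\eta) \notin X(\Delta)_{\mathrm{sing}}$ (whose existence is guaranteed by Proposition~\ref{Nashgen}) specializes, inside $\pi_{X(\Delta)}^{-1}(X(\Delta)_{\mathrm{sing}})$, to a monomial arc $\alpha_v$. For this I would use the natural action of the arc-space torus $T_\infty$ on $X(\Delta)_\infty$, where $T = \mathrm{Spec}(\mathbb{C}[M])$ is the dense torus. Given $\alpha$, each coordinate $x^m$ pulls back to a power series in $t$, and applying a suitably chosen one-parameter subgroup of $T_\infty$ produces a flat family whose special fiber is the monomial arc $\alpha_v$ determined by $v := (m \mapsto \mathrm{ord}_t(x^m \circ \alpha)) \in \sigma \cap N$. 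Since $\mathrm{val}_\alpha = \mathrm{val}_v$ is preserved in the limit, the center is preserved as well, so the limit arc lies in $\pi_{X(\Delta)}^{-1}(X(\Delta)_{\mathrm{sing}})$. Combining the three steps identifies each irreducible component of $\pi_{X(\Delta)}^{-1}(X(\Delta)_{\mathrm{sing}})$ with a unique minimal element of $S_\sigma$ for some $\sigma \in \Delta$. Finally, bijectivity of $\mathcal{N}_{X(\Delta)}$ follows because such a minimal $v$ also gives an essential valuation: no regular subdivision of $\Delta$ can remove $v$, since minimality of $v$ in $S_\sigma$ prevents it from being written as a nonnegative integer combination of other lattice points that a refinement could introduce as rays.
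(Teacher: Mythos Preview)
The paper does not supply its own proof of this statement; Theorem~\ref{IK03main} is quoted directly from \cite{IK03} and then used as a black box in Proposition~\ref{aleq1min} and Theorem~\ref{mainthm}. So there is no argument in the present paper against which to compare your sketch.

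That said, your outline tracks the structure of the original Ishii--Koll\'ar argument rather closely: the dictionary between lattice points $v\in\sigma\cap N$ and monomial arcs, the identification of specialization among monomial arcs with the partial order $\leq$ on $\sigma\cap N$, and the degeneration of an arbitrary fat arc to a monomial one via the $T_\infty$-action on $X(\Delta)_\infty$. One point in your final paragraph needs reworking, however. You argue that each minimal $v\in S_\sigma$ yields an essential valuation, but that direction is already free once you have identified the Nash valuations with the minimal elements, since Nash valuations are essential in general. Bijectivity of $\mathcal{N}_{X(\Delta)}$ requires the \emph{converse}: every essential valuation arises from a minimal element of some $S_\sigma$. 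In \cite{IK03} this is obtained by showing that a \emph{non}-minimal $v$ is not essential, via an explicit toric resolution on which the center of $\mathrm{val}_v$ is not an irreducible component of the preimage of the singular locus. Your sketch does not provide that direction, and the sentence about ``no regular subdivision of $\Delta$ can remove $v$'' is aimed at the inclusion that comes for free rather than the one that is actually needed.
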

Here, we try characterizing minimal elements with respect to $\leq$ for terminal toric singularities.
\begin{proposition}\label{aleq1min} Let $\Delta$ be a fan associated to a toric variety $X(\Delta)$ of dimension at least $3$, and let $\tau\in\Delta$ be a singular cone which defines a terminal singularity. A primitive element $w\in\stackrel{\circ}{\tau}\cap N$ is minimal in $S_{\tau}$ if $\langle m_{\tau},w\rangle\leq 2$.
\end{proposition}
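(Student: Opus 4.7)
The plan is to argue by contradiction, using only the terminal characterization recalled just before the proposition: for every cone $\tau \in \Delta$, the lattice points $z \in \tau \cap N$ with $\langle m_{\tau}, z \rangle \leq 1$ are exactly $0$ together with the primitive generators of the rays of $\tau$. Suppose $v \in S_{\tau}$ satisfies $v \leq w$, so $w = v + u$ for some $u \in \tau \cap N$, and aim to show $u = 0$, forcing $v = w$. The whole argument consists of pairing $w = v + u$ with $m_{\tau}$ and applying the terminal condition twice.

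First I would verify that $w \in S_{\tau}$, so that the notion of minimality is well-posed: $\tau$ is itself singular by hypothesis and $w \in \stackrel{\circ}{\tau} \cap N$, so $w$ lies in the relative interior of a singular face of $\tau$. Next, to bound $\langle m_{\tau}, v \rangle$ from below, note that $v$ lies in the relative interior of some singular face $\sigma \preceq \tau$. Since cones of dimension $0$ or $1$ are automatically smooth, we must have $\dim \sigma \geq 2$, and therefore $v$ is neither $0$ nor a primitive generator of a ray of $\tau$ (the rays of $\sigma$ are among the rays of $\tau$). The terminal characterization then forces $\langle m_{\tau}, v \rangle > 1$ \emph{strictly}. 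On the $u$ side, the same condition applied to $\tau$ gives $\langle m_{\tau}, u \rangle \geq 1$ as soon as $u \neq 0$.

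Combining the two bounds, if $u \neq 0$ then
\[
\langle m_{\tau}, w \rangle = \langle m_{\tau}, v \rangle + \langle m_{\tau}, u \rangle > 1 + 1 = 2,
\]
contradicting the hypothesis $\langle m_{\tau}, w \rangle \leq 2$. Hence $u = 0$ and $v = w$, which establishes minimality. There is no real obstacle in this argument; the only conceptual point to keep track of is that the bound $2$ (rather than $1$) is exactly what is needed: the contribution of $v$ is bounded below by a strict inequality precisely because its enclosing singular face has dimension at least two, while the contribution of a nonzero $u$ is only bounded below by $1$ (attained when $u$ is a primitive ray generator). The dimension hypothesis $\dim X(\Delta) \geq 3$ is used only in that it permits the existence of singular cones $\tau$, since in lower dimensions every terminal toric cone is already smooth.
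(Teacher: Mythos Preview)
Your proof is correct and follows essentially the same approach as the paper: pair the decomposition $w = v + u$ with $m_{\tau}$, use the terminal condition to get $\langle m_{\tau}, v \rangle > 1$ for the element of $S_{\tau}$, and bound the remaining term below by $1$. Your version is in fact slightly cleaner, since the paper splits into two cases for $u$ (according to whether $u$ lies on a ray or not) whereas you observe directly that the terminal condition forces $\langle m_{\tau}, u \rangle \geq 1$ for any nonzero $u \in \tau \cap N$.
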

\begin{proof}
We prove its contrapositive statement. Let $\tau\in\Delta$ be a cone such that $\tau\cap N$ is singular and corresponds to a terminal singularity, and consider an element $w\in\stackrel{\circ}{\tau}\cap N$ which is not minimal in $S_{\tau}$. Hence, there exists a non-zero element $u \in S_{\tau}$ such that $u\leq w$, i.e., $w=u+u'$ for some non-zero $u'\in\tau\cap N$.

Notice that $\langle m_{\tau},v_{i}\rangle=1$ because $X(\tau)$ is $\mathbb{Q}$-Gorenstein, and $\langle m_{\tau},u\rangle>1$  for a non-zero $u\in(\tau\cap N)\setminus\{v_{i}\in N| v_{i}:\text{primitive},  \langle v_{i} \rangle\in\Delta \text{ and } \langle v_{i} \rangle\subset\tau \}$ since the $X(\tau)$ has a terminal singularity.

We have the following two cases:\\
\textbf{Case 1} If $u'\in(\tau\cap N)\setminus(\cup_{i}\langle v_{i}\rangle)$, then we have $\langle m_{\tau},u'\rangle>1$, and
\[
\langle m_{\tau},w\rangle=\langle m_{\tau},u\rangle+\langle m_{\tau},u'\rangle>1+1=2. \
\]
\textbf{Case 2} If $u'\in\cup_{i}\langle v_{i}\rangle$, then we have $u'=\alpha v_{j}$ for some $v_{j}$ and some $\alpha\in\mathbb{Z}_{>0}$, and we get $\langle m_{\tau},u'\rangle=\alpha\langle m_{\tau},v_{j}\rangle=\alpha\cdot 1\geq 1$. So
\[
\langle m_{\tau},w\rangle=\langle m_{\tau},u\rangle+\langle m_{\tau},u'\rangle>1+1=2. \
\]
Therefore, we obtain
\[
\langle m_{\tau},w\rangle>2.
\qedhere
\]
\end{proof}
By this Proposition, Theorem \ref{IK03main} can be reformulated as follows.
\begin{theorem}\label{mainthm}
    Let $X(\Delta)$ be a $\mathbb{Q}$-Gorenstein toric variety of dimension a least 3 having only terminal singularities (hence, we have $a(E,X(\Delta))> 0$ for all $E$ over $X$), and let $E$ be an exceptional prime divisor over $X(\Delta)$ with $a(E,X(\Delta))\leq 1$. Then the induced valuation $\mathrm{val}_{E}$ is a Nash valuation of $X(\Delta)$. 
\end{theorem}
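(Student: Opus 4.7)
The plan is to combine Proposition \ref{aleq1min} with the Ishii--Kollár correspondence (Theorem \ref{IK03main}), converting the discrepancy inequality into the lattice pairing condition that is the hypothesis of Proposition \ref{aleq1min}. Since Theorem \ref{IK03main} already parameterizes Nash valuations of $X(\Delta)$ by minimal elements of $S_{\sigma}$ for cones $\sigma \in \Delta$, it suffices to treat the case where $E = E_{w}$ is the toric exceptional prime divisor corresponding to a primitive lattice vector $w \in \stackrel{\circ}{\tau} \cap N$ in some singular cone $\tau \in \Delta$.

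First I would invoke the standard toric discrepancy formula
\[
a(E_{w}, X(\Delta)) = \langle m_{\tau}, w\rangle - 1,
\]
so that the assumption $a(E, X(\Delta)) \leq 1$ is equivalent to $\langle m_{\tau}, w\rangle \leq 2$. Since $X(\Delta)$ has only terminal singularities by hypothesis, Proposition \ref{aleq1min} applies and immediately yields that $w$ is a minimal element of $S_{\tau}$. Finally, Theorem \ref{IK03main} puts minimal elements of $S_{\tau}$ in bijection with Nash valuations, so $\mathrm{val}_{E} = \mathrm{val}_{E_{w}}$ is a Nash valuation of $X(\Delta)$, as required.

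I do not expect a substantial obstacle here: the theorem is essentially a discrepancy-flavored repackaging of Theorem \ref{IK03main}, with Proposition \ref{aleq1min} supplying the link between the two formulations. The main point requiring care is to confirm that the primitivity and interior-of-cone conditions imposed in Proposition \ref{aleq1min} match those arising from the toric presentation of the exceptional divisor $E$, but these compatibilities are part of the standard toric-divisor/lattice-vector dictionary used implicitly in the statement of Theorem \ref{IK03main}.
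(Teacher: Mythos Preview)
Your proposal is correct and matches the paper's own proof essentially line for line: translate $a(E,X(\Delta))\leq 1$ into $\langle m_{\tau},w\rangle\leq 2$ via the toric discrepancy formula, apply Proposition~\ref{aleq1min} to conclude minimality of $w$ in $S_{\tau}$, and then invoke Theorem~\ref{IK03main}. Your remark about checking the primitivity and interior-of-cone compatibilities is exactly the only point the paper leaves implicit.
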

\begin{proof} 

Recall that for every exceptional prime divisor $E$ over a toric variety, its discrepancy has the form $a(E,X(\Delta))=\langle m_{\tau},w\rangle-1$, where $w\in \stackrel{\circ}{\tau}\cap N$ is the primitive element of a ray $\rho_{E}:=\langle w\rangle\subset\stackrel{\circ}{\tau}$ corresponding to $E$. 

By assumption, since $a(E,X(\Delta))=\langle m_{\tau},w\rangle-1\leq 1$, we get $\langle m_{\tau},w\rangle\leq 2$, and we obtain $w\in\stackrel{\circ}{\tau}\cap N$ is minimal in $\stackrel{\circ}{\tau}\cap N$ by Proposition \ref{aleq1min}. Hence, the associated valuation $\text{val}_{E}$ is a Nash valuation by Theorem \ref{IK03main}.  
\end{proof}

This result suggests that we can try using discrepancies to characterize Nash valuations for terminal singularities. In particular, we propose the following conjecture.

\begin{conjecture}[Conjecture A]\label{conj1} Let $(X,p)$ be 3-fold terminal. Then every exceptional prime divisor $E$ with discrepancy $a(X,E)\leq 1$ induces a Nash valuation $\mathrm{val}_{E}$ of $X$.
\end{conjecture}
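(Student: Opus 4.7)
The plan is to follow the strategy that succeeds for the cAx/2 case (Theorem \ref{mainthmcax2}) and for the toric case (Theorem \ref{mainthm1}), extending it across the entire Mori–Reid classification (Theorem \ref{morilist}). The central tool will be Hayakawa's divisorial contraction $\pi:\widebar{X}\to X$ with minimal discrepancy $\frac{1}{m}$, whose exceptional divisor $E_{0}$ provides the ``first" Nash valuation and along which we can lift arcs on $X$ to arcs on $\widebar{X}$. Given any exceptional prime divisor $E$ over $X$ with $a(E,X)\leq 1$, the key observation is
\[
a(E,\widebar{X})=a(E,X)-\tfrac{1}{m}\operatorname{ord}_{E}(\pi^{*}E_{0})\leq 1-\tfrac{1}{m}\operatorname{ord}_{E}(\pi^{*}E_{0}),
\]
so $E$ has a reasonably small discrepancy over $\widebar{X}$, allowing us to bring to bear either toric techniques (Theorem \ref{mainthm1}) or an inductive hypothesis on the Gorenstein index.

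First I would reduce the problem type by type using Theorem \ref{morilist} together with Hayakawa's explicit descriptions in \cite{Hay99,Hay00}, which give weighted blow-up models for $\widebar{X}$ and describe the singularities of $\widebar{X}$ as either smooth, cyclic quotient (hence toric), or of strictly smaller Gorenstein index. For the cyclic-quotient case I would invoke Theorem \ref{mainthm1} directly to produce, for every exceptional prime $F$ over $\widebar{X}$ with $a(F,\widebar{X})\leq 1$, a Nash component $C_{F}$ of $\widebar{X}_{\infty}$. For the case where $\widebar{X}$ still carries non-Gorenstein terminal singularities, I would set up an induction on the Gorenstein index. Then, mimicking the cAx/2 argument, I would push $C_{F}$ forward by $\pi_{\infty}:\widebar{X}_{\infty}\to X_{\infty}$ and show that the closure of $\pi_{\infty}(C_{F})$ in $\pi_{X}^{-1}(X_{\mathrm{sing}})$ is an entire irreducible component, thereby identifying $\operatorname{val}_{E}$ as a Nash valuation of $X$.

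The crucial geometric input, which makes the above compatible with discrepancies, is that if $F$ is an exceptional prime over $\widebar{X}$ whose center lies in the smooth locus of $E_{0}$, then $a(F,X)=a(F,\widebar{X})+\tfrac{1}{m}\operatorname{ord}_{F}(\pi^{*}E_{0})$ and, conversely, every $E$ over $X$ with $a(E,X)\leq 1$ either equals $E_{0}$ (the base case) or arises in this fashion; this rests on the minimality of the discrepancy of $E_{0}$. To verify surjectivity onto a Nash component one would, as in the toric case, produce an explicit family of arcs on $X$ of the form $\alpha=\pi\circ\widetilde{\alpha}$ where $\widetilde{\alpha}$ runs over a sufficiently large open subfamily of arcs through a generic point of $F$, and check dimension and irreducibility via the fiber structure of $\pi_{\infty,m}$ along the strict transform loci.

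The main obstacle is twofold. First, the classification in Theorem \ref{morilist} is genuinely case-heavy: cA/m, cAx/2, cAx/4, cD/2, cD/3, and cE/2 each require their own Hayakawa model, and in several sub-cases (notably cD/3-2, cD/3-3, cE/2) the exceptional divisor $E_{0}$ has complicated singularities and the lifting of arcs must be controlled in local coordinates adapted to the specific hyperquotient form. Second, and more seriously, the inductive step requires showing that the Nash-component status is preserved under $\pi_{\infty}$: one must verify that no two distinct Nash components of $\widebar{X}$ collapse under the push-forward to the same component on $X$ and, more delicately, that components of $\pi_{X}^{-1}(X_{\mathrm{sing}})$ lying ``entirely inside" the fiber over $p$ still arise from arcs that lift through $\pi$. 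Addressing this second issue will likely require a careful study, in the spirit of \cite{dFD16} and \cite{Chen23}, of the dimension of the fiber of $\pi_{\infty}$ over the generic arc of each candidate component, together with a type-by-type verification that the lifted arcs are algebraically fat.
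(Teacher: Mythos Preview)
The statement you are attempting to prove is \emph{Conjecture~A}, which the paper explicitly leaves open; there is no proof in the paper to compare against. The paper establishes only the special cases: the toric case (Theorem~\ref{mainthm1}), the cAx/2 case for Conjecture~B (Theorem~\ref{mainthmcax2}), and a conditional partial result in the Gorenstein case (Theorem~\ref{ch8prop}). Your proposal is therefore not a reconstruction of an existing argument but an attempt at a genuinely open problem.

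Beyond that, your outline has real gaps. First, the inductive scheme on the Gorenstein index does not close: Hayakawa's contractions do not in general reduce to cyclic quotients or to strictly smaller index; in several sub-cases of Theorem~\ref{morilist} the variety $\widebar{X}$ acquires Gorenstein terminal points (index~$1$), and for those the base case of your induction is precisely Conjecture~A with $m=1$, which the paper only handles under the extra hypothesis of Theorem~\ref{ch8prop}. Second, your claimed dichotomy---that every $E$ with $a(E,X)\leq 1$ is either $E_{0}$ or has $a(E,\widebar{X})\leq 1$ over a singular point of $\widebar{X}$---is not justified: the center of $E$ in $\widebar{X}$ need not lie in $\widebar{X}_{\mathrm{sing}}$, and even when it does, the bound $a(E,\widebar{X})\leq 1-\tfrac{1}{m}\operatorname{ord}_{E}(E_{0})$ does not by itself put you in the inductive range for the local index at that point. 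Third, the step you flag as ``the main obstacle''---that pushing forward a Nash component of $\widebar{X}$ yields a full Nash component of $X$---is exactly the content one would need, and you offer no mechanism for it; the paper's own technique in the cAx/2 case does \emph{not} proceed this way, but instead shows minimality of $\mathrm{val}_{E}$ directly via the inequality~(\ref{eq:valeqn}) by exhibiting an explicit $h\in\mathcal{O}_{X,p}$ with $\mathrm{val}_{E}(h)=1$, which is a rather different (and more hands-on) argument than lifting arc families.
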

We also propose a weaker version. 
\begin{conjecture}[Conjecture B]\label{conj2} Let $(X,p)$ be 3-fold terminal. Then every exceptional prime divisor $E$ with minimal discrepancy (i.e., $a(X,E)=\frac{1}{m}$) induces a Nash valuation $\mathrm{val}_{E}$ of $X$.
\end{conjecture}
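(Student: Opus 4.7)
The plan is to realize $\mathrm{val}_E$ as the valuation of a generic arc of an explicitly constructed irreducible family of arcs on $X$ centered at $p$, and then show that this family is a Nash component. The driving tool is Hayakawa's theorem on the existence of divisorial contractions with minimal discrepancy: since $a(E,X)=\tfrac{1}{m}$, after possibly composing with intermediate contractions, I would arrange that $E$ is the exceptional divisor of some projective birational $\pi\colon\widebar{X}\to X$ with $K_{\widebar{X}/X}=\tfrac{1}{m}E$ and $\widebar{X}$ terminal. In the cAx/2 case (Theorem \ref{mainthmcax2}), Hayakawa's work pins down $\pi$ concretely as a weighted blowing-up and describes the singular locus of $\widebar{X}$, so all subsequent steps become explicit; for the general conjecture one would need to handle also cD/2, cD/3, cAx/4, cE/2 via the classification in Theorem \ref{morilist}.

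Next I would construct the family. Because $\widebar{X}$ is terminal, its singular locus has codimension at least three, so $E\cap\widebar{X}_{\mathrm{sm}}$ is dense in $E$. Let $\widetilde{W}\subset\widebar{X}_\infty$ be the locally closed irreducible set of arcs $\tilde\alpha$ with $\tilde\alpha(0)\in E\cap\widebar{X}_{\mathrm{sm}}$, meeting $E$ transversally, and with $\tilde\alpha(\eta)\notin E$. The induced morphism of arc spaces sends $\widetilde{W}$ to an irreducible $W\subset X_\infty$, and by construction the generic arc $\alpha=\pi\circ\tilde\alpha\in W$ satisfies $\alpha(0)=p\in X_{\mathrm{sing}}$, $\alpha(\eta)\notin X_{\mathrm{sing}}$, and $\mathrm{val}_\alpha=\mathrm{val}_E$ (the last because transversality to $E$ at a smooth point of $\widebar{X}$ computes the $E$-order of any $f\in\mathcal{O}_{X,p}$ via $\pi^*$). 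Thus $W\subset\pi_X^{-1}(X_{\mathrm{sing}})$ and its generic point induces the target valuation.

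The crucial and hardest step is the maximality check: showing that $\overline{W}$ is actually an irreducible component of $\pi_X^{-1}(X_{\mathrm{sing}})$, equivalently that no fat arc $\beta\in\pi_X^{-1}(X_{\mathrm{sing}})$ strictly generalizes the generic arc of $\overline{W}$. My strategy here is to lift any candidate $\beta$ to $\widebar{X}$ via the valuative criterion of properness (as in the lifting lemma of Section 2), and then use the local normal form of $\pi$ from Hayakawa's classification to argue that either the lift $\tilde\beta$ already lies in $\widetilde{W}$, so $\mathrm{val}_\beta=\mathrm{val}_E$, or the existence of $\tilde\beta$ contradicts the terminality of $\widebar{X}$ together with the minimal-discrepancy hypothesis. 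In the toric setting this argument is packaged as Proposition \ref{aleq1min} and the Ishii--Kollár minimality criterion of Theorem \ref{IK03main}; for cAx/2 the analogue must be carried out by hand on the $\mathbb{Z}_2$-quotient of the hypersurface $x^2+y^2+f(z,u)=0$, using the weight system attached to the divisorial contraction computing $\mathrm{val}_E$, and leveraging the form $f\in(z,u)^4$ to bound from below the $E$-order of any competing valuation.

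The main obstacle I anticipate is precisely this maximality step, and it is subtle because $\widebar{X}$ typically retains singular points on $E$ (cyclic quotient points or cA/m points emerging from Hayakawa's construction). One must therefore verify that arcs on $X$ whose lifts hit these residual singular points do not yield proper generalizations of $\alpha$; for cAx/2 a case-by-case check against Hayakawa's normal forms should close this, mirroring the philosophy already used for the partial Gorenstein result stated in the introduction (where the hypothesis ``cyclic quotient or Gorenstein terminal'' on $\widebar{X}$ is exactly what lets the analogous argument go through). For the full Conjecture B beyond cAx/2, this maximality step is what is currently missing and is why only the cAx/2 case and partial Gorenstein evidence are established here.
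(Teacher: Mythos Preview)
First, note that the statement you are addressing is a \emph{conjecture}: the paper does not claim a proof of it in general, only for type cAx/2 (Theorem~\ref{mainthmcax2}), for toric terminal singularities (Theorem~\ref{mainthm}), and a weak partial statement in the Gorenstein case (Theorem~\ref{ch8prop}). Your proposal correctly acknowledges this at the end, so the honest content of your proposal is really a sketch for the cAx/2 case plus a plan for the rest.

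For cAx/2, your route is genuinely different from the paper's. You propose to build the maximal divisorial family $W\subset X_\infty$ associated with $E$ directly and then prove maximality of $\overline{W}$ inside $\pi_X^{-1}(p)$ by lifting a putative generalizing arc $\beta$ to $\widebar{X}$ and analyzing where $\tilde\beta(0)$ lands. The paper instead never works in the arc space at this level: it uses Proposition~\ref{prop:mininash} to reduce to showing that no divisorial valuation $\mathrm{val}_F$ satisfies $\mathrm{val}_F<_X\mathrm{val}_E$, and then for each residual singular point $q\in\widebar{X}_{\mathrm{sing}}$ it exhibits an explicit $h\in\mathcal{O}_{X,p}$ with $\mathrm{val}_E(h)=1$ and $\mathrm{val}_F(h)\geq 2$ for every $F$ centered at $q$. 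The engine is the numerical identity~(\ref{eq:valeqn}),
\[
\mathrm{val}_F(h)=aa_1+c\,\mathrm{val}_E(h)+d,
\]
derived by pulling back a suitable principal divisor through $\pi_\sigma$ and then through a resolution of $\widebar{X}$; once $b=\mathrm{val}_E(h)=1$, the right-hand side is automatically $\geq 2$. The explicit $h$'s (e.g.\ $x^2+y^2+z^2+u^2$ at the higher-index quotient point, or $x^2+y^2-(a_t/b_t)^2z^2+u^2$ at the Gorenstein $cD$ points in $U_3$) are the entire substance of the argument.

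The gap in your proposal is precisely here. Your description of the maximality step (``bound from below the $E$-order of any competing valuation'' using ``the weight system'' and ``the form $f\in(z,u)^4$'') does not supply a mechanism; in particular it is $\mathrm{val}_F$, not the $E$-order, that must be bounded below, and the paper's way of doing this is not a generic weight estimate but the construction of a specific function whose $E$-order equals~$1$. Without that idea---or an arc-space substitute of comparable precision, such as an explicit wedge or curvette at each residual singular point of $\widebar{X}$ showing that arcs lifting there cannot specialize to the generic arc of $W$---your maximality step remains a restatement of the goal. The two frameworks are of course equivalent (minimality in $\leq_X$ is exactly maximality of the corresponding divisorial arc family), so the paper's equation~(\ref{eq:valeqn}) and the $b=1$ trick are what you would need to import to make your outline go through.
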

In the next section, we will prove that for $X$ is a $3$-fold terminal singularity of type cAx/2 Conjecture B is true.


\section{Nash Valuations for 3-Fold Terminal Singularities of type cAx/2}

In this section, we prove that the Conjecture \ref{conjB} holds for $3$-fold terminal singularities of type cAx/2. In other words, the exceptional prime divisors over $3$-fold terminal singularities $X$ of type cAx/2 computing the minimal discrepancy will induce Nash valuations of $X$. 

\begin{theorem}
Let $(X,p)$ be a germ of 3-fold terminal singularity of type cAx/2. Then any prime divisor $E$ over $X$ with minimal discrepancy ($a(E,X)=\frac{1}{2}$ in this case) induces a Nash valuation $\mathrm{val}_{E}$ of $X$.
\end{theorem}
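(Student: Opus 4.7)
The plan is to realize $E$ as the exceptional divisor of a divisorial contraction $\pi:\overline{X}\to X$ with minimal discrepancy $a(E,X)=1/2$, and then to identify $\mathrm{val}_E$ with the valuation of an irreducible component of $\pi_X^{-1}(X_{\mathrm{sing}})$ by lifting arcs through $\pi$. The existence of such a $\pi$ for \emph{some} $E$ is given by Hayakawa's classification of divisorial contractions of cAx/2 points; that \emph{every} exceptional prime divisor over $X$ with discrepancy $1/2$ arises this way I would justify by a standard MMP-type extraction argument using the minimality of $a(E,X)$. For cAx/2 the resulting $\overline{X}$ has terminal singularities on $E$ of a very restricted form (smooth, terminal cyclic $\mathbb{Z}_2$-quotient, or lower-index Gorenstein terminal), all under explicit analytic control.

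With $\pi:\overline{X}\to X$ fixed, I would consider the contact cylinder
\[
\mathcal{B}_E := \{\tilde{\alpha}\in \overline{X}_{\infty}\mid \mathrm{center}(\tilde{\alpha})=\eta_E,\ \mathrm{ord}_E(\tilde{\alpha})=1\},
\]
where $\eta_E$ denotes the generic point of $E$. This set is irreducible, being essentially a fibration over a dense open subset of $E$ by arcs transverse to $E$, so the pushforward-and-closure $\mathcal{A}_E := \overline{\pi_{\infty}(\mathcal{B}_E)}\subseteq X_\infty$ is irreducible as well. Because $\pi(E)=\{p\}\subseteq X_{\mathrm{sing}}$ and the generic arc $\tilde{\alpha}_E\in \mathcal{B}_E$ is fat with $\tilde{\alpha}_E(\eta)\notin\mathrm{exc}(\pi)$, the image $\alpha_E:=\pi_{\infty}(\tilde{\alpha}_E)$ lies in $\pi_X^{-1}(X_{\mathrm{sing}})$, is fat with $\alpha_E(\eta)\notin X_{\mathrm{sing}}$, and satisfies $\mathrm{val}_{\alpha_E}=\mathrm{val}_E$. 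Thus $\mathcal{A}_E$ is an irreducible subset of $\pi_X^{-1}(X_{\mathrm{sing}})$ whose generic valuation is $\mathrm{val}_E$, and all that remains is to verify that $\mathcal{A}_E$ is actually an irreducible \emph{component}.

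The maximality of $\mathcal{A}_E$ is the technical heart of the argument. My approach is a specialization / discrepancy argument: if $\mathcal{A}_E\subsetneq \mathcal{C}$ for a Nash component $\mathcal{C}$ with Nash divisor $F\neq E$, then the generic point $\gamma$ of $\mathcal{C}$ specializes to $\alpha_E$, forcing $\mathrm{val}_F(f)\leq \mathrm{val}_E(f)$ for every $f\in\mathcal{O}_{X,p}$. Combining this specialization inequality with the minimality $a(E,X)=1/2$ (so that any other exceptional divisor has discrepancy $\geq 1/2$) and with an Ein--Mustata--Yasuda / Mather-discrepancy contact-locus codimension computation performed inside $\overline{X}_{\infty}$, I would aim to conclude that $F=E$. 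The delicate point is carrying out this codimension bookkeeping near the residual terminal singularities of $\overline{X}$ lying on $E$, where the classical smooth contact-locus formula must be replaced by its Mather analogue or handled locally on an etale / toric chart of the cyclic $\mathbb{Z}_2$ quotient; the corresponding case analysis against Hayakawa's explicit list of divisorial contractions for cAx/2 is where I expect the bulk of the effort to lie.
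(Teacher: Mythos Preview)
Your overall framework matches the paper's: realize $E$ as the exceptional divisor of a Hayakawa divisorial contraction $\pi_\sigma:\overline{X}\to X$ with $a(E,X)=\tfrac12$, use the fact that minimal divisorial valuations are Nash (Proposition~\ref{prop:mininash}), and reduce to showing that no Nash divisor $F\neq E$ can satisfy $\mathrm{val}_F\leq_X\mathrm{val}_E$. Where you diverge is in the mechanism for this last step, and there your proposal is underspecified in a way the paper is not.

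The paper does \emph{not} run a contact-locus or Mather-discrepancy codimension argument inside $\overline{X}_\infty$. Instead it proceeds by an elementary and explicit valuation computation. For any essential $F$ over $X$ with $\mathrm{val}_F<_X\mathrm{val}_E$, the center of $F$ on $\overline{X}$ is some singular point $q\in\overline{X}_{\mathrm{sing}}$. Choosing any irreducible $g\in m_{\overline{X},q}\setminus\mathcal{I}_{E,q}$ and any $h\in\mathcal{I}_{\pi_*G,p}$ (with $G=\mathrm{div}_{\overline{X}}(g)$), one gets the identity
\[
\mathrm{val}_F(h)=aa_1+c\,\mathrm{val}_E(h)+d,\qquad a,a_1\in\mathbb{Z}_{\geq 1},\ c\in\mathbb{Q}_{>0},\ d\in\mathbb{Q}_{\geq 0},
\]
so that as soon as one exhibits an $h$ with $\mathrm{val}_E(h)=1$, the inequality $\mathrm{val}_F(h)>\mathrm{val}_E(h)$ follows immediately. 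The heart of the paper's proof is then a chart-by-chart construction of such an $h$ (typically $h=x^2+y^2+z^2+u^2$ or a small variant) at each singular point of $\overline{X}$, carried out against Hayakawa's explicit weighted blowups for cAx/2 in the two cases ($f_{\tau_0}$ a perfect square or not). No arc-space codimension enters at all.

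Your proposed alternative---replacing this explicit $h$-construction by an Ein--Musta\c{t}\u{a}--Yasuda style codimension bound on $\overline{X}_\infty$---is precisely the step you flag as ``delicate,'' and it is not clear it goes through. The residual singularities of $\overline{X}$ in type cAx/2 include Gorenstein terminal $cD$ points (not just cyclic quotients), where Mather-discrepancy bookkeeping does not reduce to a toric chart computation, and where the inequality $a(F,X)\geq\tfrac12$ alone is too weak to force $F=E$ without further input. The paper sidesteps all of this by producing the function $h$ directly; that is the idea your proposal is missing.
\bigskip
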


\subsection{Candidates for Nash Valuations}
Let $(X,p)$ be a germ of terminal singularity. It makes sense to choose a prime divisor $E$ over $(X,p)$ with $a(E,X)\leq 1$ to be a candidate to produce a Nash valuation of $X$ because $\mathrm{val}_{E}$ is, in fact, an essential valuation of $X$. 
\begin{proposition}[{\cite[
Proposition 24]{JK13}}]
Let $(X,p)$ be a germ of canonical singularity (see \cite{KM98,Mat02}), and let $E$ be an exceptional prime divisor over $X$ centered at $p$ and satisfy
\[
a(E,X)< 1+(\text{the minimal discrepancy of }(X,p)).
\]
Then $\mathrm{val}_{E}$ is an essential valuation of $X$.    
\end{proposition}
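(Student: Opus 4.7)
The plan is to argue by contradiction, combining the additivity of discrepancies along a resolution with the standard lower bound on discrepancies over a smooth variety. Assume $\mathrm{val}_E$ is not essential, so on some resolution $f\colon Y\to X$ whose exceptional locus $\mathrm{exc}(f)=\sum_{j=1}^{r}F_j$ is a divisor (one may replace $f$ by a dominating log resolution so that $\mathrm{exc}(f)$ is moreover simple normal crossings) the center $Z$ of $\mathrm{val}_E$ in $Y$ is not any of the $F_j$. Since $E$ is centered at $p$, we have $Z\subseteq f^{-1}(p)\subseteq\bigcup_j F_j$. Irreducibility of $Z$ together with its being distinct from every $F_j$ forces the codimension $c:=\mathrm{codim}_Y(Z)$ to be at least $2$, and $Z\subseteq F_i$ for some index $i$.

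Two classical inputs now drive the argument. First, because $Y$ is smooth and the center of $\mathrm{val}_E$ on $Y$ has codimension $c$, the discrepancy of $E$ with respect to the smooth variety $Y$ satisfies
\[
a(E,Y)\;\geq\;c-1\;\geq\;1.
\]
This is the well-known fact that a divisorial valuation over a smooth variety, whose center has codimension $c$, has discrepancy at least $c-1$; it can be verified by induction on the length of a sequence of smooth blowups extracting $E$, or by invoking the explicit description of minimal log discrepancies on a smooth ambient (cf.\ \cite{KM98}). Second, writing $K_{Y/X}=\sum_{j}a(F_j,X)\,F_j$ gives the additivity
\[
a(E,X)\;=\;a(E,Y)\;+\;\mathrm{val}_E(K_{Y/X})\;=\;a(E,Y)\;+\;\sum_{j}a(F_j,X)\cdot\mathrm{val}_E(F_j).
\]
The inclusion $Z\subseteq F_i$ forces $\mathrm{val}_E(F_i)\geq 1$. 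Moreover, $F_i$ is an exceptional prime divisor of $f$ whose image contains $p$, so $F_i$ is centered at $p$ (and equals $\{p\}$ set-theoretically in the isolated case), whence canonicity of $(X,p)$ yields $a(F_i,X)\geq(\text{minimal discrepancy of }(X,p))$.

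Combining these bounds,
\[
a(E,X)\;\geq\;(c-1)\;+\;a(F_i,X)\cdot\mathrm{val}_E(F_i)\;\geq\;1+(\text{minimal discrepancy of }(X,p)),
\]
contradicting the hypothesis $a(E,X)<1+(\text{minimal discrepancy of }(X,p))$. Hence $Z$ must coincide with some $F_j$, making $\mathrm{val}_E$ an essential valuation. The main obstacle is the smooth-variety bound $a(E,Y)\geq c-1$: although classical, its proof requires either an inductive monomialization argument through a tower of smooth blowups extracting $E$, or invoking the explicit formula for minimal log discrepancies over a smooth ambient. Once this is granted, the remainder is essentially bookkeeping with $K_{Y/X}=\sum a(F_j,X)F_j$ and the observation that the center $Z$ necessarily lies in the exceptional divisor.
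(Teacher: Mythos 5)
Note first that the paper does not prove this statement: it is imported verbatim from \cite[Proposition 24]{JK13}, so your attempt can only be compared with the standard argument behind that citation. Your main engine --- the additivity $a(E,X)=a(E,Y)+\sum_j a(F_j,X)\,\mathrm{val}_E(F_j)$, the smooth-model bound $a(E,Y)\geq \mathrm{codim}_Y(Z)-1$, and nonnegativity of the $a(F_j,X)$ from canonicity --- is indeed the right one. The gap is in your very first reduction. Essentiality, as defined here, requires the center of $\mathrm{val}_E$ to be an irreducible component of $f^{-1}(X_{\mathrm{sing}})$ for \emph{every} resolution $f$, including resolutions whose exceptional locus is not a divisor; canonical (even Gorenstein terminal) points admit such resolutions, e.g.\ the small resolution of the ordinary double point $xy=zw$, whose exceptional set is a single curve. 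The step ``one may replace $f$ by a dominating log resolution'' is not justified: if non-essentiality is witnessed on a resolution $Y_0$ (say the center of $E$ on $Y_0$ is a point strictly inside an exceptional curve), the center of $E$ on a dominating log resolution can perfectly well be one of the divisors $F_j$, so your contradiction never materializes. What your argument actually proves is the weaker ``divisorially essential'' property (the condition checked only on resolutions with divisorial exceptional locus). Worse, on a resolution with small exceptional locus there may be no exceptional prime divisors $F_j$ at all, so both the inclusion $Z\subseteq F_i$ and the term $a(F_i,X)\,\mathrm{val}_E(F_i)$ you rely on are unavailable.

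The repair is to run the estimate directly on an arbitrary resolution $f:Y\to X$. If the center $Z$ of $\mathrm{val}_E$ is strictly contained in an irreducible component $W$ of $f^{-1}(X_{\mathrm{sing}})$, let $E_W$ be the divisor obtained by blowing up (the generic point of) $W$ in $Y$, so that $a(E_W,Y)=\mathrm{codim}_Y(W)-1$ and $E_W$ is exceptional over the germ, whence $a(E_W,X)\geq(\text{minimal discrepancy of }(X,p))$. Since $Z\subsetneq W$ gives $\mathrm{codim}_Y(Z)\geq\mathrm{codim}_Y(W)+1$, and since $K_{Y/X}$ is effective (canonicity) with $\mathrm{val}_E(K_{Y/X})\geq\mathrm{ord}_W(K_{Y/X})=\mathrm{val}_{E_W}(K_{Y/X})$ because $Z\subseteq W$, one gets $a(E,X)\geq \mathrm{codim}_Y(W)+\mathrm{val}_{E_W}(K_{Y/X})=1+a(E_W,X)\geq 1+(\text{minimal discrepancy})$, the desired contradiction; your case is exactly the special case $W=F_i$ of codimension one. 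The ordinary double point also shows the non-divisorial resolutions cannot be bypassed and that the bound is sharp: blowing up a point on the exceptional curve of the small resolution gives $E$ with $a(E,X)=2=1+(\text{minimal discrepancy})$, whose center on that small resolution is strictly smaller than the component, so $E$ is not essential, yet this failure is invisible on resolutions with divisorial exceptional locus of the kind your argument inspects.
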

\begin{corollary}
Let $(X,p)$ be a 3-fold terminal singularity with Gorenstein index $m\geq1$. Then any exceptional prime divisor $E$ over $X$ such that $a(E,X)\leq 1$ is essential for $X$. In particular, any exceptional prime divisor over $X$ with minimal discrepancy (in this case, $\frac{1}{m}$) is essential for $X$.    
\end{corollary}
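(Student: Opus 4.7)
The plan is to read this off directly from the preceding Johnson--Kollár Proposition together with the Kawamata--Markushevich theorem quoted in the preliminaries. First I would invoke the standard implication that a terminal singularity is canonical, so the Johnson--Kollár Proposition applies to $(X,p)$. By the Kawamata--Markushevich theorem stated earlier in the preliminaries (``for a 3-fold terminal singularity with Gorenstein index $m\geq 1$, its minimal discrepancy is exactly $\tfrac{1}{m}$''), the threshold appearing in that proposition is
\[
1+(\text{minimal discrepancy of }(X,p))\;=\;1+\tfrac{1}{m}\;>\;1.
\]

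Next I would verify the remaining hypothesis of the Johnson--Kollár Proposition, namely that $E$ is centered at $p$. Because $(X,p)$ is a germ of 3-fold terminal singularity, the singular locus has codimension $\geq 3$ and therefore is the single point $\{p\}$; any birational morphism $f:Y\to X$ is an isomorphism outside $p$, so each exceptional prime divisor $E\subseteq Y$ must satisfy $f(E)=\{p\}$. Hence every exceptional prime divisor over the germ $(X,p)$ is automatically centered at $p$.

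With these two observations in place, the hypothesis $a(E,X)\leq 1$ gives $a(E,X)\leq 1<1+\tfrac{1}{m}$, so the Proposition yields that $\mathrm{val}_E$ is an essential valuation of $X$, proving the first assertion. The ``in particular'' part is then immediate: any exceptional prime divisor computing the minimal discrepancy satisfies $a(E,X)=\tfrac{1}{m}\leq 1$ for every $m\geq 1$, so the first part applies. There is really no substantive obstacle here; the only point requiring any attention is the short verification that divisors over the germ are centered at $p$, which is forced by the isolated nature of 3-fold terminal singularities.
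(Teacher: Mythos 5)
Your overall route is the intended one: terminal implies canonical, the Kawamata--Markushevich statement gives minimal discrepancy $\tfrac{1}{m}>0$, so the Johnson--Koll\'ar threshold is $1+\tfrac{1}{m}>1$ and $a(E,X)\leq 1$ falls below it; this is exactly how the corollary is meant to follow from the cited proposition.

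However, your discharge of the hypothesis ``$E$ is centered at $p$'' contains a genuine error. It is not true that every birational morphism $f:Y\to X$ of the germ is an isomorphism outside $p$, nor that every exceptional prime divisor over the germ has center $\{p\}$: one may, for instance, blow up a curve $C$ through $p$. Since $X_{\mathrm{sing}}=\{p\}$, the generic point of $C$ lies in the smooth locus, so the divisor $E$ extracted by this blow-up has $a(E,X)=\operatorname{codim}(C)-1=1\leq 1$ but its center is the curve $C$, not the point $p$. Such a divisor is in fact \emph{not} essential: taking a resolution that is an isomorphism over $X\setminus\{p\}$, the center of $\mathrm{val}_{E}$ there is the strict transform of $C$, which is not an irreducible component of $f^{-1}(X_{\mathrm{sing}})=f^{-1}(p)$. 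So the ``centered at $p$'' condition in the Johnson--Koll\'ar proposition cannot be derived as automatic; it must be carried along as a (here implicit) hypothesis on $E$ --- which is how the statement is actually used later in the paper, where the relevant divisors (e.g.\ the exceptional divisor of a divisorial contraction to $p$) do have zero-dimensional center. With that hypothesis retained, the rest of your argument, including the ``in particular'' clause via $\tfrac{1}{m}\leq 1$, is fine.
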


\subsection{Setup for the Proof}\label{sec:NashEqn}
For a singularity $(X,p)$, define the partial order among the divisorial valuations centered at $p\in X$ :
\[
    \mathrm{val}_{E} \leq_{X} \mathrm{val}_{F}
    \quad\text{if}\quad
    \mathrm{val}_{E}(f) \leq \mathrm{val}_{F}(f)
    \quad\text{for all}\quad
    f\in \mathcal{O}_{X,p}.
\]A divisorial valuation on $X$ $\mathrm{val}_{E}$ is \emph{minimal} if it is minimal among all divisorial vlautions centered at $p$ with respect to $\leq_{X}$.

\begin{proposition}[\cite{dFD16,BLM25}]\label{prop:mininash}
\[\{\text{minimal valuations of }X\} \subseteq\{ \text{Nash valuations of }X\}.\]
\end{proposition}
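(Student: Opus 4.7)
The plan is to realize each minimal divisorial valuation $\mathrm{val}_{E}$ as the valuation induced by the generic arc of a Nash component, using the maximal divisorial set attached to $\mathrm{val}_{E}$ together with the semicontinuity of valuation functions along specializations of arcs in $X_{\infty}$.

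Given a $\leq_{X}$-minimal divisorial valuation $\mathrm{val}_{E}$ centered at $p$, I would first introduce the \emph{maximal divisorial set} $C_{X}(\mathrm{val}_{E}) \subseteq X_{\infty}$, defined as the Zariski closure of the locus of arcs $\alpha$ whose induced semi-valuation on $\mathcal{O}_{X,p}$ coincides with $\mathrm{val}_{E}$. By the standard arc-theoretic machinery (Ishii; de Fernex--Ein--Ishii), this set is irreducible, and its generic point $\alpha_{E}$ satisfies $\mathrm{val}_{\alpha_{E}} = \mathrm{val}_{E}$. Since $E$ is centered at $p$, every arc in $C_{X}(\mathrm{val}_{E})$ has closed point $p$, so
\[
C_{X}(\mathrm{val}_{E}) \;\subseteq\; \pi_{X}^{-1}(p) \;\subseteq\; \pi_{X}^{-1}(X_{\mathrm{sing}}).
\]
In particular $\alpha_{E}$ belongs to some Nash component $C_{i}$ of $X$; let $\alpha_{i}$ denote its generic point, so that $\alpha_{E} \in \overline{\{\alpha_{i}\}}$ in $X_{\infty}$.

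The key input is the semicontinuity statement that each contact locus $\{\beta \in X_{\infty} : \mathrm{val}_{\beta}(f) \geq k\}$ is closed, which forces specialization to weakly increase valuations. Applied to $\alpha_{E} \in \overline{\{\alpha_{i}\}}$, this yields $\mathrm{val}_{\alpha_{i}}(f) \leq \mathrm{val}_{\alpha_{E}}(f) = \mathrm{val}_{E}(f)$ for every $f \in \mathcal{O}_{X,p}$. The Nash valuation $\mathrm{val}_{C_{i}} := \mathrm{val}_{\alpha_{i}}$ is a genuine divisorial valuation, since generic arcs of Nash components are fat, and its center contains $p$ (by continuity of $\pi_{X}$, together with $\pi_{X}(\alpha_{i}) \in X_{\mathrm{sing}}$ and $p \in \overline{\{\pi_{X}(\alpha_{i})\}}$; in the 3-fold terminal case this forces $\pi_{X}(\alpha_{i}) = p$ because $X_{\mathrm{sing}}$ is a discrete set of closed points). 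Hence $\mathrm{val}_{C_{i}}$ is a divisorial valuation centered at $p$ with $\mathrm{val}_{C_{i}} \leq_{X} \mathrm{val}_{E}$, and the minimality of $\mathrm{val}_{E}$ forces the equality $\mathrm{val}_{C_{i}} = \mathrm{val}_{E}$, showing that $\mathrm{val}_{E}$ is itself a Nash valuation.

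The single delicate step is the semicontinuity input that translates a topological specialization in $X_{\infty}$ into a pointwise inequality of valuations; it is standard but it is precisely where the arc-space machinery enters. Once it is granted, the remainder of the argument is essentially tautological: a minimal divisorial valuation with a divisorial specialization lying below it must coincide with that specialization.
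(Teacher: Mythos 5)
The paper does not give its own proof of this proposition—it is quoted directly from \cite{dFD16,BLM25}—and your argument is exactly the standard one behind those references: take the generic arc $\alpha_E$ of the maximal divisorial set of $\mathrm{val}_E$ (so $\mathrm{ord}_{\alpha_E}=\mathrm{val}_E$ and $\alpha_E(0)=p$), observe it lies in $\pi_X^{-1}(X_{\mathrm{sing}})$ and hence in some Nash component $C_i$, use closedness of the contact loci $\{\beta : \mathrm{ord}_\beta(f)\geq k\}$ to get $\mathrm{val}_{C_i}\leq_X \mathrm{val}_E$, and conclude by minimality. You also correctly isolate the two inputs needed for this to close in the paper's setting, namely that $\mathrm{val}_{C_i}$ is an honest divisorial valuation (Nash's theorem, quoted in Section 2) and that it is centered at $p$ (automatic here because $X_{\mathrm{sing}}$ consists of isolated closed points), so the proposal is correct and complete.
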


We outline the strategy of the proof. Let $(X,p)$ be a 3-fold terminal singularity with Gorenstein index $m\geq1$, and let $\pi:\widebar{X}\to X$ be a divisorial contraction with minimal discrepancy as in Definition \ref{def:HayDivCon}. Denote by $E=\pi^{-1}(p)$ the $\mathbb{Q}$-Cartier exceptional prime divisor in $\widebar{X}$ obtained by $\pi$. Let $F$ be an essential exceptional prime divisor over $X$ centered at $p$. 

Here we derive an equation relating $\text{val}_{E}$ and $\text{val}_{F}$. Since $\pi:\widebar{X}\to X$ is a divisorial contraction, $\widebar{X}$ has only terminal singularities, which are isolated points (notice that $\mathrm{codim(}\widebar{X}_{\mathrm{sing}},\widebar{X})\geq 3$ since $\widebar{X}$ is terminal). Let $\phi:Y\to\widebar{X}$ be a resolution. Notice that $\mathrm{val}_{F}$ is essential over $X$, so $F$ appears in $Y$. Hence, $F$ is centered at some singular point $q\in \widebar{X}$.

Let $g\in m_{\widebar{X},q}\setminus \mathcal{I}_{E,q}$ be irreducible, where $m_{\widebar{X},q}\subseteq\mathcal{O}_{\widebar{X},q}$ is the maximal ideal, and $\mathcal{I}_{E}$ is the sheaf of ideals associated with $E$. Let $G=\mathrm{div}_{\widebar{X}}(g)\subseteq\widebar{X}$ be the principal divisor induced by $g$, which is prime in a neighborhood of $q\in\widebar{X}$.

Let $h\in\mathcal{I}_{\pi_{*}G,p}$, where $\pi_{*}G$ is the push-forward of $G$ via $\pi$ in $X$, which is a Weil divisor containing $p$, and $\mathcal{I}_{\pi_{*}G}$ is the sheaf of ideals associated with $\pi_{*}G$. Let $H=\mathrm{div}_{X}(h)\subseteq X$ be the principal divisor induced by $h$, which is Cartier. Then 
\[
\pi^{*}H=aG+bE+A 
\] where $a,b\in \mathbb{Z}_{\geq 1}$ and $A$ is some $\mathbb{Q}$-Cartier divisor in $\widebar{X}$ without components from $G$ or $E$ (notice that here $E$ is $\mathbb{Q}$-Cartier, $a\geq 1$ since $h\in\mathcal{I}_{\pi_{*}G}$, and $b\geq 1$ since $x\in\pi_{*}G$).

Applying $\phi^{*}$ to $\pi^{*}H$, we get
\[
\phi^{*}\pi^{*}H=\phi^{*}(aG+bE+A)=a\tilde{G}+b\tilde{E}+(aa_{1}+bc+d)F+(\mathrm{others})  
\]where $\phi^{*}E=\tilde{E}+cF+(\mathrm{others})$, $\phi^{*}G=\tilde{G}+a_{1}F+(\mathrm{others})$, $\phi^{*}A=dF+\tilde{A}+(\mathrm{others})$, with $a_{1}\in\mathbb{Z}_{\geq 1}$, $c\in\mathbb{Q}_{> 0}, d\in\mathbb{Q}_{\geq 0}$. "Others" means divisors whose supports are disjoint from the shown divisors in the corresponding equations. In addition, $\tilde{G}$, $\tilde{E}$, and $\tilde{A}$ denote the strict transforms of $G$, $E$, and $A$ to $Y$.

We can see that
\[
\mathrm{val}_{E}(h)=b,
\quad
\mathrm{val}_{F}(h)=aa_{1}+bc+d.
\]In other words,
\[
\mathrm{val}_{F}(h)=aa_{1}+c\mathrm{val}_{E}(h)+d.\label{eq:valeqn}\tag{\#}
\]Here is the main idea of the proof. Assume that $\mathrm{val}_{E}$ is non-Nash, so $\mathrm{val}_{E}$ is non-minimal by Proposition~\ref{prop:mininash}, which means there is a Nash valuation $\mathrm{val}_{F}$ centered at $p$ such that $\mathrm{val}_{F}<_{X}\mathrm{val}_{E}$. Our goal is to find some particular $h\in\mathcal{I}_{\pi_{*}G,p}$ such that $\text{val}_{E}(h)<\text{val}_{F}(h)$, which leads to a contradiction. To obtain such inequality, it suffices to construct an $h\in\mathcal{I}_{\pi_{*}G,p}$ satisfying either (1) $\mathrm{val}_{E}(h)=1(=b)$ or (2) $c\geq 1$ in equation~(\ref{eq:valeqn}).
\begin{remark}
In the following proof, we will construct an $h\in\mathcal{I}_{\pi_{*}G,p}$ such that $\mathrm{val}_{E}(h)=1$.    
\end{remark}

\subsection{Weighted Blowing-Ups}
From \cite{Hay99,Hay00}, we know that any divisorial contraction with minimal discrepancy is obtained by a \emph{weighted blowing-up} with some weight $\sigma=\frac{1}{m}(a,b,c,d)\in\frac{1}{m}(\mathbb{Z}_{>0})^{4}$. In this section, we introduce the notation and tools that will be used in the proof of our main theorem in this section. The details can be found in \cite{Hay99,Hay00,Chen16,Chen23}.

The cyclic quotient singularity $Y=\mathbb{C}^{4}/\frac{1}{m}(\alpha_{1},\alpha_{2},\alpha_{3},\alpha_{4})$ is the toric variety corresponding to the lattice $\widebar{N}:=\mathbb{Z}e_{1}+\mathbb{Z}e_{2}+\mathbb{Z}e_{3}+\mathbb{Z}e_{4}+\mathbb{Z}e=N+\mathbb{Z}e$ and the cone $C=\mathbb{R}_{\geq 0}e_{1}+\mathbb{R}_{\geq 0}e_{2}+\mathbb{R}_{\geq 0}e_{3}+\mathbb{R}_{\geq 0}e_{4}$, where $e_{i}$, $i=1,2,3,4$, are the standard basis vectors for $\mathbb{Z}^{4}$, and $e=\frac{1}{m}(\alpha_{1},\alpha_{2},\alpha_{3},\alpha_{4})\in\frac{1}{m}\mathbb{Z}^{4}$. The fan $\Delta$ of $Y$ consist of all the faces of $C$. In other words,
\[
Y=\mathbb{C}^{4}/\tfrac{1}{m}(\alpha_{1},\alpha_{2},\alpha_{3},\alpha_{4})\cong\mathrm{Spec}(\mathbb{C}[C^{\vee}\cap \widebar{M}])
\]where $\widebar{M}:=\mathrm{Hom}_{\mathbb{Z}}(\widebar{N},\mathbb{Z})$.

Let us give a description of the weighted blowing-ups for $\mathbb{C}^{4}/\frac{1}{m}(\alpha_{1},\alpha_{2},\alpha_{3},\alpha_{4})$. Let $\sigma=\frac{1}{m}(a,b,c,d)\in \widebar{N}$ with $a,b,c,d>0$, and assume $e_{1},e_{2},e_{3},e_{4},\sigma$ generate the lattice $\widebar{N}$. This $\sigma$ will be called a \emph{weight}. 

We construct the \emph{weighted blowing-up} $\bar{\pi}_{\sigma}:\widebar{Y}\to Y=\mathbb{C}^{4}/\frac{1}{m}(\alpha_{1},\alpha_{2},\alpha_{3},\alpha_{4})$ with weight $\sigma$ as follows. Subdivide the cone $C$ by adding the $1$-dimensional cone $\mathbb{R}_{\geq 0}\sigma$. We obtain a subdivision of $C$ into $4$ cones:
\begin{equation*}
\begin{split}
&C_{1}=\mathbb{R}_{\geq0}\sigma +\mathbb{R}_{\geq0}e_{2}+\mathbb{R}_{\geq0}e_{3}+\mathbb{R}_{\geq0}e_{4},\\
&C_{2}=\mathbb{R}_{\geq0}e_{1} +\mathbb{R}_{\geq0}\sigma+\mathbb{R}_{\geq0}e_{3}+\mathbb{R}_{\geq0}e_{4},\\
&C_{3}=\mathbb{R}_{\geq0}e_{1}+\mathbb{R}_{\geq0}e_{2} +\mathbb{R}_{\geq0}\sigma+\mathbb{R}_{\geq0}e_{4},\\
&C_{4}=\mathbb{R}_{\geq0}e_{1}+\mathbb{R}_{\geq0}e_{2}+\mathbb{R}_{\geq0}e_{3} +\mathbb{R}_{\geq0}\sigma.
\end{split}    
\end{equation*}
Let $\Delta'$ be the fan consisting of all faces of $C_{1},C_{2},C_{3},C_{4}$. Then the \emph{weighted blowing-up} $\widebar{Y}$ of $Y$ is defined as the toric variety corresponding to the lattice $\widebar{N}$ and the fan $\Delta'$, and $\bar{\pi}_{\sigma}:\widebar{Y}\to Y$ is the morphsim induced from the natural map of fans $(\widebar{N},\Delta')\to(\widebar{N},\Delta)$.

The toric variety $\widebar{Y}$ is covered by 4 affine open sets $\widebar{U}_{1},\widebar{U}_{2},\widebar{U}_{3},\widebar{U}_{4}$ corresponding to the cones $C_{1},C_{2},C_{3},C_{4}$, respectively. These affine open sets are as follows:
\begin{equation*}
\begin{split}
&\widebar{U}_{1}=\mathbb{C}^{4}/\tfrac{1}{a}(m,-b,-c,-d)\cong\mathrm{Spec}(\mathbb{C}[C_{1}^{\vee}\cap \widebar{M}]),\\
&\widebar{U}_{2}=\mathbb{C}^{4}/\tfrac{1}{b}(-a,m,-c,-d)\cong\mathrm{Spec}(\mathbb{C}[C_{2}^{\vee}\cap \widebar{M}]),\\
&\widebar{U}_{3}=\mathbb{C}^{4}/\tfrac{1}{c}(-a,-b,m,-d)\cong\mathrm{Spec}(\mathbb{C}[C_{3}^{\vee}\cap \widebar{M}]),\\
&\widebar{U}_{4}=\mathbb{C}^{4}/\tfrac{1}{d}(-a,-b,-c,m)\cong\mathrm{Spec}(\mathbb{C}[C_{4}^{\vee}\cap \widebar{M}]).
\end{split}    
\end{equation*}
And $\bar{\pi}_{\sigma}$ are given by:
\begin{equation*}
\begin{aligned}
&\bar{\pi}_{\sigma}|_{\widebar{U}_{1}}:\widebar{U}_{1}\longrightarrow Y  &\quad
&(x,y,z,u)\mapsto (x^{\frac{a}{m}},x^{\frac{b}{m}}y,x^{\frac{c}{m}}z,x^{\frac{d}{m}}u),\\
&\bar{\pi}_{\sigma}|_{\widebar{U}_{2}}:\widebar{U}_{2}\longrightarrow Y &\quad
&(x,y,z,u)\mapsto (y^{\frac{a}{m}}x,y^{\frac{b}{m}},y^{\frac{c}{m}}z,y^{\frac{d}{m}}u),\\
&\bar{\pi}_{\sigma}|_{\widebar{U}_{3}}:\widebar{U}_{3}\longrightarrow Y &\quad
&(x,y,z,u)\mapsto (z^{\frac{a}{m}}x,z^{\frac{b}{m}}y,z^{\frac{c}{m}},z^{\frac{d}{m}}u),\\
&\bar{\pi}_{\sigma}|_{\widebar{U}_{4}}:\widebar{U}_{4}\longrightarrow Y &\quad
&(x,y,z,u)\mapsto (u^{\frac{a}{m}}x,u^{\frac{b}{m}}y,u^{\frac{c}{m}}z,u^{\frac{d}{m}}).
\end{aligned}
\end{equation*}
In these expressions, $x$, $y$, $z$, $u$ denote coordinates in the ambient $\mathbb{C}^4$. 

The exceptional divisor $\widebar{E}$ of $\bar{\pi}_{\sigma}$ is isomorphic to the weighted projective space $\mathbb{P}_{\mathbb{C}}(a,b,c,d)$.

Let us describe weighted blowing-ups for a subvariety defined by a semi-invariant in $\mathbb{C}^{4}/\frac{1}{m}(\alpha_{1},\alpha_{2},\alpha_{3},\alpha_{4})$. Let $\phi\in\mathbb{C}\{x,y,z,u\}$ be a $\mathbb{Z}_{m}$-semi-invariant. It defines a hyperquotient singularity $X=(\phi=0)\subseteq\mathbb{C}^{4}/\frac{1}{m}(\alpha_{1},\alpha_{2},\alpha_{3},\alpha_{4})$. Let $\widebar{X}=(\bar{\pi}_{\sigma})^{-1}(X)$ be the strict transform of $X$ via $\bar{\pi}_{\sigma}$ and let $\pi_{\sigma}=\bar{\pi}_{\sigma}|_{\widebar{X}}$ be the restriction of $\bar{\pi}_{\sigma}$ in $\widebar{X}$. Then $\pi_{\sigma}:\widebar{X}\to X$ is called the \emph{weighted blowing-up with weight $\sigma=\frac{1}{m}(a,b,c,d)$} (or, briefly, the $\sigma$-blowing-up). Let $U_{i}:=\widebar{U}_{i}\cap\widebar{X}$ for $i=1,2,3,4$, then each $U_{i}$ is a hyperquotient singularity in $\widebar{U}_{i}$ and $\widebar{X}$ is covered by 4 charts $U_{1},U_{2},U_{3},U_{4}$. We denote $E:=\widebar{E}\cap\widebar{X}\subseteq\mathbb{P}_{\mathbb{C}}(a,b,c,d)$. Notice that $E$ is not necessarily irreducible in $\widebar{X}$. 

We can explicitly express the defining equations of $\widebar{X}\cap U_{i}$ for $i=1,2,3,4$. Write a semi-invariant $\phi=\sum_{i,j,k,\ell}a_{ijk\ell}x^{\alpha_{i}}y^{\beta_{j}}z^{\gamma_{k}}u^{\delta_{\ell}}$, and we define 
\[
\mathrm{wt}_{\sigma}(\phi):=\min \left\{ \frac{a\alpha_{i}+b\beta_{j}+c\gamma_{k}+d\delta_{\ell}}{m} \ \middle| \ a_{ijk\ell}\neq 0 \right\}
\]for a given weight $\sigma=\frac{1}{m}(a,b,c,d)\in\widebar{N}$. Then $\widebar{X}\cap U_{i}$ for $i=1,2,3,4$ is defined by 
\begin{equation*}
\begin{split}
&\phi_{1}:=\phi(x^{\frac{a}{m}},x^{\frac{b}{m}}y,x^{\frac{c}{m}}z, x^{\frac{d}{m}}u)x^{-\mathrm{wt}_{\sigma}(\phi)} \text{ in } U_{1},\\
&\phi_{2}:=\phi(y^{\frac{a}{m}}x,y^{\frac{b}{m}},y^{\frac{c}{m}}z, y^{\frac{d}{m}}u)y^{-\mathrm{wt}_{\sigma}(\phi)} \text{ in } U_{2},\\
&\phi_{3}:=\phi(z^{\frac{a}{m}}x,z^{\frac{b}{m}}y,z^{\frac{c}{m}}, z^{\frac{d}{m}}u)z^{-\mathrm{wt}_{\sigma}(\phi)} \text{ in } U_{3},\\
&\phi_{4}:=\phi(u^{\frac{a}{m}}x,u^{\frac{b}{m}}y,u^{\frac{c}{m}}z, u^{\frac{d}{m}})u^{-\mathrm{wt}_{\sigma}(\phi)} \text{ in } U_{4}.\\
\end{split}
\end{equation*}

Let us describe the singular locus $(\widebar{X})_{\mathrm{sing}}$ of $\widebar{X}$. Write 
\[
(\widebar{X})_{\mathrm{sing}}=[(\widebar{X})_{\mathrm{sing}}]_{\mathrm{ind}=1}\bigcup
[(\widebar{X})_{\mathrm{sing}}]_{\mathrm{ind}>1}
\]where $[(\widebar{X})_{\mathrm{sing}}]_{\mathrm{ind}=1}$ ($[(\widebar{X})_{\mathrm{sing}}]_{\mathrm{ind}>1}$, resp.) denotes the singular locus with Gorenstein index $=1$ (Gorenstein index $>1$, resp.). Then we can describe $(\widebar{X})_{\mathrm{sing}}$ in the following way.
\begin{proposition}[2.1 in \cite{Chen16}]\label{prop:Xbarsing}
Keep the notation in this section. Let $\pi_{\sigma}:\widebar{X}\to X$ be the weighted blowing-up of $X$ with respect to the weight $\sigma$, then we have
\begin{itemize}
    \item[(1)] $[(\widebar{X})_{\mathrm{sing}}]_{\mathrm{ind}>1}=\widebar{X}\cap(\mathbb{P}_{\mathbb{C}}(a,b,c,d))_{\mathrm{sing}}=E\cap (\mathbb{P}_{\mathbb{C}}(a,b,c,d))_{\mathrm{sing}}$.
    \item[(2)] If $\widebar{U}_{i}\cong\mathbb{C}^{4}$ for some $i\in\{1,2,3,4\}$. Then $(\widebar{X})_{\mathrm{sing}}\cap U_{i}\subseteq E_{\mathrm{sing}}\cap U_{i}$.
    \item[(3)] If $\widebar{X}$ is a terminal 3-fold, then $[(\widebar{X})_{\mathrm{sing}}]_{\mathrm{ind}=1}\subseteq E_{\mathrm{sing}}$.
\end{itemize}    
\end{proposition}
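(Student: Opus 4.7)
The plan is to treat the three parts separately using the chart-by-chart description of $\widebar{X}$ developed in this section and the fact that $X$ has an isolated singularity at $p$.

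For part (1), I first identify the non-Gorenstein locus of the ambient toric variety $\widebar{Y}$: each chart $\widebar{U}_{i} \cong \mathbb{C}^{4}/\tfrac{1}{w_{i}}(\ldots)$ (with $w_{i}\in\{a,b,c,d\}$) is smooth when $w_{i}=1$ and otherwise has cyclic-quotient singularities along the image of the fixed-point locus of the $\mathbb{Z}_{w_{i}}$-action; assembling these across the four charts, the non-Gorenstein locus of $\widebar{Y}$ lies inside $\widebar{E}$ and, under the identification $\widebar{E}\cong\mathbb{P}_{\mathbb{C}}(a,b,c,d)$, coincides with $(\mathbb{P}_{\mathbb{C}}(a,b,c,d))_{\mathrm{sing}}$. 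The forward containment in (1) is then immediate: a hypersurface in a Gorenstein ambient is a local complete intersection and hence itself Gorenstein of index~$1$, so a point $q\in\widebar{X}$ of Gorenstein index $>1$ must lie where the ambient is non-Gorenstein, and therefore belongs to $\widebar{X}\cap\widebar{E}\cap(\mathbb{P}_{\mathbb{C}}(a,b,c,d))_{\mathrm{sing}}=E\cap(\mathbb{P}_{\mathbb{C}}(a,b,c,d))_{\mathrm{sing}}$. The reverse containment requires showing that at such a point the quotient hypersurface $\widebar{X}$ itself has Gorenstein index $>1$; this is obtained by a direct local computation using a semi-invariant lift of $\phi_{i}$ together with adjunction on the ambient quotient.

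For part (2), since $X$ has an isolated singularity at $p$ and $\pi_{\sigma}$ is an isomorphism over $X\setminus\{p\}$, we have $(\widebar{X})_{\mathrm{sing}}\subseteq E$. Given $q\in(\widebar{X})_{\mathrm{sing}}\cap U_{i}$ with $\widebar{U}_{i}\cong\mathbb{C}^{4}$, the Jacobian criterion applied to the defining equation $\phi_{i}$ of $\widebar{X}$ in $\widebar{U}_{i}$ yields $d\phi_{i}(q)=0$. Since $E$ is cut out inside the smooth chart $\widebar{U}_{i}$ by the two equations $\phi_{i}=0$ and $x_{i}=0$ (where $x_{i}$ denotes the coordinate whose vanishing locus is $\widebar{E}\cap\widebar{U}_{i}$), the Jacobian matrix for $E$ at $q$ has vanishing $\phi_{i}$-row, and hence rank at most $1$, which is below the codimension of $E$ in $\widebar{U}_{i}$; therefore $q\in E_{\mathrm{sing}}$. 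For part (3), take $q\in[(\widebar{X})_{\mathrm{sing}}]_{\mathrm{ind}=1}$; by part (1), $q\notin(\mathbb{P}_{\mathbb{C}}(a,b,c,d))_{\mathrm{sing}}$, so the ambient $\widebar{Y}$ is smooth at $q$, and the argument of part (2) applies verbatim in any smooth local chart around $q$.

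The main obstacle is the reverse containment in part (1): one must rule out the possibility that a point of $\widebar{X}\cap(\mathbb{P}_{\mathbb{C}}(a,b,c,d))_{\mathrm{sing}}$ is actually smooth in $\widebar{X}$, or has Gorenstein index~$1$, despite the ambient cyclic-quotient singularity. This amounts to checking, for each standard form of $\phi$ listed in Theorem~\ref{morilist}, that the $\mathbb{Z}_{w_{i}}$-character of the chart-equation $\phi_{i}$ is compatible with the canonical class calculation and forces the quotient hypersurface to inherit a non-Gorenstein singularity, rather than having the quotient structure absorbed into the hypersurface.
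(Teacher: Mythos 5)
First, a point of comparison: the paper does not prove this statement at all --- it is quoted verbatim from \cite{Chen16} (Proposition~2.1 there), and the intended justification is that citation; a self-contained proof is exactly the chart-by-chart analysis, tied to the specific weights of Hayakawa's and Chen's blow-ups, carried out in that reference. Measured against that, your part~(2) is essentially fine: since the two equations $x_{i}=\phi_{i}=0$ cut out $E$ scheme-theoretically in the smooth chart $\widebar{U}_{i}\cong\mathbb{C}^{4}$, the vanishing of $d\phi_{i}(q)$ forces $\dim T_{q}E\geq 3>2$ (equivalently, restrict $\phi_{i}$ to the hyperplane $\{x_{i}=0\}$ and note all its partials vanish at $q$). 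The problem is with (1) and, consequently, (3).

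For (1), the reverse inclusion is the actual content of the proposition, and you do not prove it: deferring it to ``a direct local computation using a semi-invariant lift of $\phi_{i}$ together with adjunction,'' and then naming it as ``the main obstacle'' to be checked case by case over Mori's list, is precisely the part of Chen's argument that is missing. Moreover, the forward inclusion as you argue it rests on a false identification: the non-Gorenstein locus of the ambient toric $4$-fold $\widebar{Y}$ does not coincide with $(\mathbb{P}_{\mathbb{C}}(a,b,c,d))_{\mathrm{sing}}$, because Gorenstein and smooth are different conditions for cyclic quotients (e.g.\ $\frac{1}{2}(1,1,1,1)$ is Gorenstein but singular), and at a quotient point of $\widebar{Y}$ the strict transform $\widebar{X}$ is only the quotient of a semi-invariant hypersurface --- it need not be Cartier --- so ``hypersurface in a Gorenstein ambient is lci, hence index $1$'' does not apply: $(x=0)\subseteq\mathbb{C}^{4}/\frac{1}{2}(1,1,1,1)$ has index $2$ even though the ambient is Gorenstein. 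The correct cheap statement is only that $[(\widebar{X})_{\mathrm{sing}}]_{\mathrm{ind}>1}\subseteq E$, since $\widebar{X}\setminus E\cong X\setminus\{p\}$ is smooth; pushing the index-$>1$ locus into $(\mathbb{P}_{\mathbb{C}}(a,b,c,d))_{\mathrm{sing}}$ requires knowing that $\widebar{Y}$ is smooth (not merely Gorenstein) along $E\setminus(\mathbb{P}_{\mathbb{C}}(a,b,c,d))_{\mathrm{sing}}$, which is not formal --- a stabilizer can act by a quasi-reflection on the slice $\widebar{E}$ while acting non-trivially on the normal direction --- and must be verified from the explicit weights. Your part~(3) inherits the same gap: the step ``$q\notin(\mathbb{P}_{\mathbb{C}}(a,b,c,d))_{\mathrm{sing}}$, so $\widebar{Y}$ is smooth at $q$'' is exactly the unproven comparison between the singular locus of the $4$-fold $\widebar{Y}$ and that of its exceptional divisor, so (3) cannot be waved through ``verbatim'' from (2) without it.
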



\section{Proof of Theorem \ref{mainthmcax2}}

We follow the notation of Section \ref{sec:NashEqn}. Assume $\mathrm{val}_{E}$ is not a Nash valuations of $X$, so $\mathrm{val}_{E}$ is non-minimal by Proposition \ref{prop:mininash}. Then there is a divisor $F$ over $X$ inducing a Nash valuation $\mathrm{val}_{F}$ such that 
\[
\mathrm{val}_{F}<_{X}\mathrm{val}_{E},
\]and $F$ must be centered at some singular point of $\widebar{X}$. It suffices to show that for any $q\in\widebar{X}_{\mathrm{sing}}$ such that $F$ is centered at $q$, there is $f\in\mathcal{O}_{X,p}$ such $\mathrm{val}_{F}(f)>\mathrm{val}_{E}(f)$; this contradicts $\mathrm{val}_{F}<_{X}\mathrm{val}_{E}$, concluding that $\mathrm{val}_{E}$ is a Nash valuation of $X$.   

We follow the arguments in \cite[Proposition 18]{Chen16} and \cite[5.2]{HayG05}. Recall that the standard form of this type of 3-fold terminal singularity $(X,p)$ is 
\[
X=(x^{2}+y^{2}+f(z,u)=0)\subseteq\mathbb{C}^{4}/\tfrac{1}{2}(0,1,1,1)
\]where $f(z,u)\in(z,u)^{4}\subseteq\mathbb{C}\{z,u\}$ is a $\mathbb{Z}_{2}$-invariant with respect to $\frac{1}{2}(0,1,1,1)$, and $p=(0,0,0,0)\in X$. Define 
\[
\tau_{0}:=\min\{\,i+j\,|\,z^{i}u^{j}\in\mathrm{supp}(f(z,u))\}\geq 4,
\]where $\mathrm{supp}(f(z,u))$ denotes the set of all monomials with a non-zero coefficient appearing in $f(z,u)$. Notice that $f(z,u)$ is a $\mathbb{Z}_{2}$-invariant with respect to $\frac{1}{2}(0,1,1,1)$, so this implies $\tau_{0}\in\mathbb{Z}_{>0}$ is even. 

Let $f_{\tau_0}(z,u)=\sum_{i+j=\tau_{0}}a_{ij}z^{i}u^{j}$, and express it in the form of the following factorization:
\[
f_{\tau_0}(z,u)=\sum_{i+j=\tau_{0}}a_{ij}z^{i}u^{j}=\prod_{t\in T}(a_{t}z+b_{t}u)^{m_{t}}
\]
where $\sum_{t\in T}m_{t}=\tau_{0}$. 

\subsection{Case 1: \texorpdfstring{$f_{\tau_{0}}(z,u)$}{ft0(z,u)} is not a perfect square}
From \cite[8.2]{Hay99}, consider the weighted blow-up with the following weight
\[
\sigma:=
\begin{cases}
   \tfrac{1}{2}(\frac{\tau_{0}}{2},\frac{\tau_{0}}{2}+1,1,1),&\text{if }\frac{\tau_{0}}{2}\text{ is even},  \\
   \tfrac{1}{2}(\frac{\tau_{0}}{2}+1,\frac{\tau_{0}}{2},1,1),&\text{if }\frac{\tau_{0}}{2}\text{ is odd}. 
\end{cases}
\]
We consider the first case $\sigma=\frac{1}{2}(\frac{\tau_{0}}{2},\frac{\tau_{0}}{2}+1,1,1)$, and the second case follows by an analogous argument. The weighted blowing-up with respect to $\sigma$ induces a divisorial contraction $\pi_{\sigma}:\widebar{X}\to X$ with exceptional prime divisor $E=\pi^{-1}_{\sigma}(p)\subseteq\widebar{X}$ with minimal discrepancy (in this case $a(E,X)=\frac{1}{2}$) over $X$ by Theorem 8.4 in \cite{Hay99}. Notice that $\widebar{X}$ is covered by four affine open sets $U_{1}$, $U_{2}$, $U_{3}$, and $U_{4}$. By \cite[Proposition 18]{Chen16} and \cite[5.2]{HayG05}, we have that $\widebar{X}$ is non-singular on $U_{1}$, it has one terminal quotient singularity of index $\frac{\tau_{0}}{2}+1$ in $U_{2}$, denoted by $Q_{2}$, and several Gorenstein terminal singularities of type $cD$ in $U_{3}$ and $U_{4}$. 

In $U_{2}$, there is a terminal quotient singularity $Q_{2}=(0,0,0,0)\in U_{2}$ with index $\frac{\tau_{0}}{2}+1$ (see \cite[Case 1 of Proposition 18]{Chen16} and \cite[5.2]{HayG05}). We consider $g=x^{2}y^{\frac{\tau_{0}}{2}-1}+y^{\frac{\tau_{0}}{2}}+z^{2}+u^{2}\in\mathcal{O}_{\widebar{X},Q_{2}}\setminus\mathcal{I}_{E,Q_{2}}$ ($g\notin\mathcal{I}_{E,Q_{2}}$ since $E=(x^{2}+f_{\tau_{0}}(z,u)=0)$ in $U_{2}$), and let $G=\mathrm{div}_{\widebar{X}}(g)\subseteq\widebar{X}$. 
Consider $h=x^{2}+y^{2}+z^{2}+u^{2}\in\mathcal{O}_{X,p}$, and let $H=\mathrm{div}_{X}(h)$. 

Here we compute $\pi_{\sigma}^{*}H$, the pullback of $H$ via $\pi_{\sigma}$, in $U_{2}$. Notice that $\pi_{\sigma}$ is the weighted blowing-up with respect to $\sigma$ given by
\[
(\pi_{\sigma}^{\#}(x), \pi_{\sigma}^{\#}(y), \pi_{\sigma}^{\#}(z), \pi_{\sigma}^{\#}(u))=(xy^{\frac{\tau_{0}}{4}},y^{\frac{\tau_{0}}{4}+\frac{1}{2}},zy^{\frac{1}{2}},uy^{\frac{1}{2}}).
\]Hence, 
\begin{align*}
\pi_{\sigma}^{\#}(h)&=\pi_{\sigma}^{\#}(x^{2}+y^{2}+z^{2}+u^{2})=x^{2}y^{\frac{\tau_{0}}{2}}+y^{\frac{\tau_{0}}{2}+1}+z^{2}y+u^{2}y
\\
&=y(x^{2}y^{\frac{\tau_{0}}{2}-1}+y^{\frac{\tau_{0}}{2}}+z^{2}+u^{2})=yg.
\end{align*}
In other words, we obtain $
\pi_{\sigma}^{*}H=G+E$. Notice that $(\pi_{\sigma})_{*}G=H$, so we have $h=x^{2}+y^{2}+z^{2}+u^{2}\in\mathcal{I}_{(\pi_{\sigma})_{*}G,p}$. Applying the equation (\ref{eq:valeqn}) to $\pi_{\sigma}$, we have $b=\text{val}_{E}(h)=1$, so we have $\text{val}_{E}(h)<\text{val}_{F}(h)$ for any divisor $F$ over $X$ centered at $Q_{2}\in\widebar{X}$.

We compute the strict transform of the defining equation of $\widebar{X}$ in $U_{3}$, and it is given by
\[
(\pi_{\sigma}^{\#}(x), \pi_{\sigma}^{\#}(y), \pi_{\sigma}^{\#}(z), \pi_{\sigma}^{\#}(u))=(xz^{\frac{\tau_{0}}{4}},yz^{\frac{\tau_{0}}{4}+\frac{1}{2}},z^{\frac{1}{2}},uz^{\frac{1}{2}}).
\]Hence, in $U_{3}$,
\begin{align*}
\pi_{\sigma}^{\#}(x^{2}+y^{2}+f(z,u))&=\pi_{\sigma}^{\#}(x^{2}+y^{2}+f_{\tau_{0}}(z,u)+f_{>\tau_{0}}(z,u))\\
&=x^{2}z^{\frac{\tau_{0}}{2}}+y^{2}z^{\frac{\tau_{0}}{2}+1}+f_{\tau_{0}}(z^{\frac{1}{2}},uz^{\frac{1}{2}})+f_{>\tau_{0}}(z^{\frac{1}{2}},uz^{\frac{1}{2}})\\
&=x^{2}z^{\frac{\tau_{0}}{2}}+y^{2}z^{\frac{\tau_{0}}{2}+1}+\prod_{t\in T}(a_{t}z^{\frac{1}{2}}+b_{t}uz^{\frac{1}{2}})^{m_{t}}+f_{>\tau_{0}}(z^{\frac{1}{2}},uz^{\frac{1}{2}})\\
&=z^{\frac{\tau_{0}}{2}}\left(x^{2}+y^{2}z+\prod_{t\in T}(a_{t}+b_{t}u)^{m_{t}}+\frac{f_{>\tau_{0}}(z^{\frac{1}{2}},uz^{\frac{1}{2}})}{z^{\frac{\tau_{0}}{2}}}\right).
\end{align*}
We obtain the strict transform of $X=(x^{2}+y^{2}+f(z,u)=0)\subseteq\mathbb{C}/\frac{1}{2}(0,1,1,1)$ in $U_{3}$ is $(x^{2}+y^{2}z+\prod_{t\in T}(a_{t}+b_{t}u)^{m_{t}}+\frac{f_{>\tau_{0}}(z^{\frac{1}{2}},uz^{\frac{1}{2}})}{z^{\frac{\tau_{0}}{2}}}=0)\subseteq\mathbb{C}^{4}$, which contains some Gorenstein terminal singularities of type $cD$. 

Here we compute $(\widebar{X})_{\mathrm{sing}}$ in $U_{3}$ by applying the Jacobian criterion to 
\[
(x^{2}+y^{2}z+\prod_{t\in T}(a_{t}+b_{t}u)^{m_{t}}+\frac{f_{>\tau_{0}}(z^{\frac{1}{2}},uz^{\frac{1}{2}})}{z^{\frac{\tau_{0}}{2}}}=0)\subseteq\mathbb{C}^{4}.
\]Let $F=x^{2}+y^{2}z+\prod_{t\in T}(a_{t}+b_{t}u)^{m_{t}}+\frac{f_{>\tau_{0}}(z^{\frac{1}{2}},uz^{\frac{1}{2}})}{z^{\frac{\tau_{0}}{2}}}$, and we solve the equation $\mathrm{grad}(F)=(0,0,0,0)$ for $(x,y,z,u)$. 
\begin{equation*}
\begin{split}
2x&=0,\\
2yz&=0,\\
y^{2}+\frac{\partial}{\partial z}\left(\frac{f_{>\tau_{0}}(z^{\frac{1}{2}},uz^{\frac{1}{2}})}{z^{\frac{\tau_{0}}{2}}}\right)&=0,\\
\frac{\partial}{\partial u}\left(\prod_{t\in T}(a_{t}+b_{t}u)^{m_{t}}+\frac{f_{>\tau_{0}}(z^{\frac{1}{2}},uz^{\frac{1}{2}})}{z^{\frac{\tau_{0}}{2}}}\right)&=0.   
\end{split}
\end{equation*}
Notice that we have $z=0$ in $U_{3}$ because $(\widebar{X})_{\mathrm{sing}}\cap U_{3}\subseteq E_{\mathrm{sing}}\cap U_{3}$ by Proposition \ref{prop:Xbarsing}. Some direct calculations show that $(\widebar{X})_{\mathrm{sing}}$ in $U_{3}$ is $\{(0,\beta_{t},0,-\frac{a_{t}}{b_{t}})\}_{t\in T}\subseteq\mathbb{C}^{4}$ for some $\beta_{t}\in\mathbb{C}$. 

Consider $h=x^{2}+y^{2}-(\frac{a_{t}}{b_{t}})^{2}z^{2}+u^{2}$ ($H=\mathrm{div}_{X}(h)$), and $g=x^{2}z^{\frac{\tau_{0}}{2}-1}+y^{2}z^{\tau_{0}}+u^{2}-(\frac{a_{t}}{b_{t}})^{2}$ ($G=\mathrm{div}_{\widebar{X}}(g)$). We have
\begin{align*}
\pi_{\sigma}^{\#}(h)&=\pi_{\sigma}^{\#}(x^{2}+y^{2}-(\frac{a_{t}}{b_{t}})^{2}z^{2}+u^{2})=x^{2}z^{\frac{\tau_{0}}{2}}+y^{2}z^{\frac{\tau_{0}}{2}+1}-(\frac{a_{t}}{b_{t}})^{2}z+u^{2}z)\\
&=z(x^{2}z^{\frac{\tau_{0}}{2}-1}+y^{2}z^{\tau_{0}}-(\frac{a_{t}}{b_{t}})^{2}+u^{2})=zg.    
\end{align*}
Hence, we have $
\pi_{\sigma}^{*}H=G+E$. Applying equation (\ref{eq:valeqn}) to $\pi_{\sigma}$, we have $b=\text{val}_{E}(h)=1$, so we have $\text{val}_{E}(h)<\text{val}_{F}(h)$ for any divisor $F$ over $X$ centered at any point in $\{(0,\beta_{t},0,-\frac{a_{t}}{b_{t}})\}_{t\in T}$.

In $U_{4}$, we have an argument similar to the case in $U_{3}$. Therefore, $\text{val}_{E}$ induces a Nash valuation of $X$. 

\subsection{Case 2: \texorpdfstring{$f_{\tau_{0}}(z,u)$}{ft0(z,u)} is a perfect square}
Write $f_{\tau_{0}}(z,u)=(p_{\frac{\tau_{0}}{2}}(z,u))^{2}$. By \cite[Theorem 8.8]{Hay99}, there are two non-isomorphic divisorial contractions with minimal discrepancy for 3-fold terminal singularity of type cAx/2, and they are described by the following ways. 

we apply the changes of coordinates in \cite[8.6]{Hay99}:
\[
x\mapsto x\pm p_{\frac{\tau_{0}}{2}}(z,u),\quad y\mapsto y,\quad z\mapsto z,\quad u\mapsto u.
\]
and $(X,p)$ is rewritten as
\[
(x^{2}\pm 2xp_{\frac{\tau_{0}}{2}}(z,u)+y^{2}+f_{ \geq\tau_{0}+1}(z,u)=0)\subseteq\mathbb{C}^{4}/\tfrac{1}{2}(0,1,1,1).
\]where $\pm$ denotes two changes of variables corresponding to the two non-isomorphic divisorial contractions.

 We deal with the case $X=(x^{2}+ 2xp_{\frac{\tau_{0}}{2}}(z,u)+y^{2}+f_{ \geq\tau_{0}+1}(z,u)=0)$ here; case $X=(x^{2}- 2xp_{\frac{\tau_{0}}{2}}(z,u)+y^{2}+f_{ \geq\tau_{0}+1}(z,u)=0)$ is similar.

From \cite[8.2]{Hay99}, consider the weighted blowing-up with respect to the following weight.
\[
\sigma:=
\begin{cases}
   \frac{1}{2}(\frac{\tau_{0}}{2}+2,\frac{\tau_{0}}{2}+1,1,1),&\text{if }\frac{\tau_{0}}{2}\text{ is even},  \\
   \frac{1}{2}(\frac{\tau_{0}}{2}+1,\frac{\tau_{0}}{2}+2,1,1),&\text{if }\frac{\tau_{0}}{2}\text{ is odd}.
\end{cases}
\]
We consider the first case $\sigma=\frac{1}{2}(\frac{\tau_{0}}{2}+2,\frac{\tau_{0}}{2}+1,1,1)$; the second case $\frac{1}{2}(\frac{\tau_{0}}{2}+1,\frac{\tau_{0}}{2}+2,1,1)$ will follow to an analogous argument. The weighted blowing-up with respect to $\sigma$ induces a divisorial contraction $\pi_{\sigma}:\widebar{X}\to X$ with the exceptional prime divisor $E=\pi^{-1}_{\sigma}(p)\subseteq\widebar{X}$ with minimal discrepancy (in this case $a(E,X)=\frac{1}{2}$) over $X$ by \cite[Theorem 8.7]{Hay99}.

$\widebar{X}$ is non-singular on $U_{2}$. In $U_{1}$, we have that $\widebar{X}$ is singular at $Q_{1}\in U_{1}$ (see \cite[Case 2 of Proposition 18]{Chen16} and \cite[5.2]{HayG05}), which is a terminal quotient singularity of Gorenstein index $\frac{\tau_{0}}{2}+2$. We consider $g=x^{\frac{\tau_{0}}{2}+1}+y^{2}x^{\frac{\tau_{0}}{2}}+z^{2}+u^{2}$, and let $G=\text{div}_{\widebar{X}}(g)\subseteq\widebar{X}$. Consider $h=x^{2}+y^{2}+z^{2}+u^{2}$, and let $H=\mathrm{div}_{X}(h)\subseteq X$. 

We compute $\pi_{\sigma}^{*}H$, and it is given by
\[
(\pi_{\sigma}^{\#}(x), \pi_{\sigma}^{\#}(y), \pi_{\sigma}^{\#}(z), \pi_{\sigma}^{\#}(u))=(x^{\frac{\tau_{0}}{4}+1},yx^{\frac{\tau_{0}}{4}+\frac{1}{2}},zx^{\frac{1}{2}},ux^{\frac{1}{2}}).
\]Hence, we have
\begin{align*}
\pi_{\sigma}^{\#}(h)&=\pi_{\sigma}^{\#}(x^{2}+y^{2}+z^{2}+u^{2})=x^{\frac{\tau_{0}}{2}+2}+y^{2}x^{\frac{\tau_{0}}{2}+1}+z^{2}x+u^{2}x\\
&=x(x^{\frac{\tau_{0}}{2}+1}+y^{2}x^{\frac{\tau_{0}}{2}}+z^{2}+u^{2})=xg.
\end{align*}
We obtain $\pi_{\sigma}^{*}H=G+E
$. Notice that $(\pi_{\sigma})_{*}(G)=H$ ($G$ is the strict transform of $H$), so we apply the equation (\ref{eq:valeqn}) to $\pi_{\sigma}$ and we have $b=\text{val}_{E}(h)=1$. So we have $\text{val}_{E}(h)<\text{val}_{F}(h)$ for any divisor $F$ over $X$ centered at $Q_{1}\in U_{1}\subseteq\widebar{X}$.

We compute the strict transform of the defining equation of $\widebar{X}$ in $U_{3}$, and it is given by
\[
(\pi_{\sigma}^{\#}(x), \pi_{\sigma}^{\#}(y), \pi_{\sigma}^{\#}(z), \pi_{\sigma}^{\#}(u))=(xz^{\frac{\tau_{0}}{4}+1},yz^{\frac{\tau_{0}}{4}+\frac{1}{2}},z^{\frac{1}{2}},uz^{\frac{1}{2}}). 
\]Hence, in $U_{3}$,
\begin{align*}
&\pi_{\sigma}^{\#}(x^{2}+2xp_{\frac{\tau_{0}}{2}}(z,u)+y^{2}+f_{>\tau_{0}+1}(z,u))\\
&\qquad=x^{2}z^{\frac{\tau_{0}}{2}+2}+2xz^{\frac{\tau_{0}}{4}+1}p_{\frac{\tau_{0}}{2}}(z^{\frac{1}{2}},uz^{\frac{1}{2}})+y^{2}z^{\frac{\tau_{0}}{2}+1}+f_{>\tau_{0}+1}(z^{\frac{1}{2}},uz^{\frac{1}{2}})\\
&\qquad=z^{\frac{\tau_{0}}{2}+1}\left(x^{2}z+2xp_{\frac{\tau_{0}}{2}}(1,u)+y^{2}+\frac{f_{>\tau_{0}+1}(z^{\frac{1}{2}},uz^{\frac{1}{2}})}{z^{\frac{\tau_{0}}{2}+1}}\right).
\end{align*}
We obtain that the strict transform of the defining equation $(x^{2}+y^{2}+f(z,u)=0)\subseteq\mathbb{C}/\frac{1}{2}(0,1,1,1)$ in $U_{3}$ is $(x^{2}z+2xp_{\frac{\tau_{0}}{2}}(1,u)+y^{2}+\frac{f_{>\tau_{0}+1}(z^{\frac{1}{2}},uz^{\frac{1}{2}})}{z^{\frac{\tau_{0}}{2}+1}}=0)\subseteq\mathbb{C}^{4}$, which contains Gorenstein terminal singularities at worst of type $cD$ (meaning either non-singular or singular of type $cD$).

Here we compute $(\widebar{X})_{\mathrm{sing}}$ in $U_{3}$ by applying the Jacobian criterion to
\[
(x^{2}z+2xp_{\frac{\tau_{0}}{2}}(1,u)+y^{2}+\frac{f_{>\tau_{0}+1}(z^{\frac{1}{2}},uz^{\frac{1}{2}})}{z^{\frac{\tau_{0}}{2}+1}}=0)\subseteq\mathbb{C}^{4}.
\]Let $F=x^{2}z+2xp_{\frac{\tau_{0}}{2}}(1,u)+y^{2}+\frac{f_{>\tau_{0}+1}(z^{\frac{1}{2}},uz^{\frac{1}{2}})}{z^{\frac{\tau_{0}}{2}+1}}$, and we solve equation $\mathrm{grad}(F)=(0,0,0,0)$ for $(x,y,z,u)$. 
\begin{equation*}
\begin{split}
2xz+2p_{\frac{\tau_{0}}{2}}(1,u)&=0,\\
2y&=0,\\
x^{2}+\frac{\partial}{\partial z}\left(\frac{f_{>\tau_{0}+1}(z^{\frac{1}{2}},uz^{\frac{1}{2}})}{z^{\frac{\tau_{0}}{2}+1}}\right)&=0,\\
\frac{\partial}{\partial u}\left(2xp_{\frac{\tau_{0}}{2}}(1,u)+\frac{f_{>\tau_{0}+1}(z^{\frac{1}{2}},uz^{\frac{1}{2}})}{z^{\frac{\tau_{0}}{2}+1}}\right)&=0.   
\end{split}
\end{equation*}
Notice that we have $z=0$ in $U_{3}$ because $(\widebar{X})_{\mathrm{sing}}\cap U_{3}\subseteq E_{\mathrm{sing}}\cap U_{3}$ by Proposition \ref{prop:Xbarsing}. Some direct calculations show that $(\widebar{X})_{\mathrm{sing}}$ in $U_{3}$ is $\{(a_{t},0,0,b_{t})\}_{t\in T}$ for some $a_{t},b_{t}\in\mathbb{C}$. Consider $h=x^{2}+y^{2}-b_{t}^{2}z^{2}+u^{2}$ ($H=\mathrm{div}_{X}(h)\subseteq X$), and $g=x^{2}z^{\frac{\tau_{0}}{2}-1}+y^{2}z^{\tau_{0}}+u^{2}-b_{t}^{2}\in\mathcal{O}_{\widebar{X},(a_{t},0,0,b_{t})}\setminus\mathcal{I}_{E,(a_{t},0,0,b_{t})}$ ($G=\mathrm{div}_{\widebar{X}}(g)\subseteq\widebar{X}$). We have
\begin{align*}
\pi_{\sigma}^{\#}(h)&=\pi_{\sigma}^{\#}(x^{2}+y^{2}-b_{t}^{2}z^{2}+u^{2})=x^{2}z^{\frac{\tau_{0}}{2}}+y^{2}z^{\frac{\tau_{0}}{2}+1}-b_{t}^{2}z+u^{2}z\\
&=z(x^{2}z^{\frac{\tau_{0}}{2}-1}+y^{2}z^{\tau_{0}}-b_{t}^{2}+u^{2})=zg.    
\end{align*}
Hence, we have $\pi_{\sigma}^{*}H=G+E
$. Notice that $(\pi_{\sigma})_{*}(G)=H$ ($G$ is the strict transform of $H$). So we apply the equation (\ref{eq:valeqn}) to $\pi_{\sigma}$ and we have $b=\text{val}_{E}(h)=1$, so we have $\text{val}_{E}(h)<\text{val}_{F}(h)$ for any divisor $F$ over $X$ centered at any point in $\{(a_{t},0,0,b_{t})\}_{t\in T}$.

In $U_{4}$, we have an argument similar to the case in $U_{3}$. Hence, $\text{val}_{E}$ induces a Nash valuation. Therefore, we prove that any exceptional prime divisor $E$ over $X$ with minimal discrepancy (i.e., $a(E,X)=\frac{1}{2}$) induces a Nash valuation $\mathrm{val}_{E}$ for a 3-fold terminal singularity of type cAx/2.


\section{Nash Valuations for 3-Fold Gorenstein Terminal Singularities}

Let $(X,p)$ be a germ of 3-fold Gorenstein terminal singularity. In this final section, we establish a partial result in support of Conjectures A and B, and provide some examples as evidence (notice that Conjectures A and B are the same because the minimal discrepancy of a 3-fold Gorenstein terminal singularity is $1$ by \cite[Theorem 0.1]{Mar96}).


We use the set-up developed in \ref{sec:NashEqn}, and the result follows directly.
\begin{theorem}\label{ch8prop}
Let $(X,p)$ be a 3-fold Gorenstein terminal singularity. Let $\pi:\widebar{X}\to X$ be a divisorial contraction with minimal discrepancy (it is $1$ in this case), and let $E=\pi^{-1}(p)$ be its exceptional prime divisor. Assume that the singularities of $\widebar{X}$ are terminal cyclic quotient or Gorenstein terminal. Then there is an exceptional prime divisor over $X$ with discrepancy $1$ that induces a Nash valuation. In other words, 
\[
\{F \text{ exceptional over }X\ |\ a(F,X)=1 \text{ and } \mathrm{val}_{F} \text{ is a Nash valuation of } X\}\neq\emptyset.
\]Moreover, if every exceptional prime divisor over $X$ computing the minimal discrepancy can be obtained by a divisorial contraction with minimal discrepancy to $X$, then $\mathrm{val}_{E}$ is a Nash valuation of $X$.
\end{theorem}
\begin{proof}
If $\mathrm{val}_{E}$ is Nash for $X$, then we are done. If not, then there is an exceptional prime divisor $F$ over $X$ such that $\mathrm{val}_{F}$ is Nash for $X$, and $\mathrm{val}_{F}<_{X}\mathrm{val}_{E}$ by Proposition~\ref{prop:mininash}.

Let $\phi:Y\to\widebar{X}$ be a resolution (notice that $F$ appears in $Y$ since $\mathrm{val}_{F}$ is Nash for $X$, which is essential for $X)$. Consider $\phi^{*}E=\tilde{E}+c F+\sum c_{i}F_{i}$ where $\tilde{E}$ denotes the strict transform of $E$ via $\phi$. By direct computations, we have
\[
a(F,X)=a(F,\widebar{X})+c.
\]Notice that $a(F,X)\in\mathbb{Z}_{>0}$ since $(X,p)$ is Gorenstein. 

We claim that this valuation $\mathrm{val}_{F}$ has $a(F,X)=1$. Since $\mathrm{val}_{F}$ is Nash for $X$ (so $\mathrm{val}_{F}$ is essential for $X$), $F$ must be centered at some terminal cyclic quotient singularity or some Gorenstein terminal singularity in $\widebar{X}$.

If $F$ is centered at any Gorenstein point $q\in\widebar{X}$, we have $a(F,\widebar{X})\in\mathbb{Z}_{>0}$. This implies $c=a(F,X)-a(F,\widebar{X})\in\mathbb{Z}_{>0}$ (notice that $c\in\mathbb{Q}_{>0}$ since $F$ is centered at $q\in\widebar{X}$). Applying a similar argument as \ref{sec:NashEqn} to this scenario, we obtain
\[
\mathrm{val}_{F}(h)=aa_{1}+c\mathrm{val}_{E}(h)+d
\]
for some $h\in\mathcal{O}_{X,p}$. Since $c\geq1$, we get $\mathrm{val}_{E}(h)<\mathrm{val}_{F}(h)$, which contradicts the fact $\mathrm{val}_{E}>_{X}\mathrm{val}_{F}$. Hence, $F$ must be centered at some terminal cyclic quotient singularity in $\widebar{X}$. We have
\[
0<a(F,\widebar{X})<1
\]because this singularity has a resolution such that all exceptional prime divisors have discrepancies less than $1$ (see \cite[Proposition 5.1]{Hay99}, and the corresponding morphism is called \emph{economic resolution}). In this case, we must have $a(F,X)=1$ because $a(F,X)\geq 2$ implies 
\[
c=a(F,X)-a(F,\widebar{X})>2-1=1,
\]
which also provides a $h\in\mathcal{O}_{X,p}$ such that $\mathrm{val}_{E}(h)<\mathrm{val}_{F}(h)$. Therefore, there is an exceptional prime divisor over $X$ with discrepancy equal to $1$ such that it induces a Nash valuation of $X$. 

For the second assertion, suppose $\mathrm{val}_{E}$ is non-Nash for $X$, and assume that every exceptional prime divisor over $X$ computing the minimal discrepancy corresponds to a divisorial contraction with minimal discrepancy to $X$. This implies that every exceptional prime divisor over $X$ computing the minimal discrepancy can be obtained by \emph{w-morphisms} to $p\in X$ (see \cite[Definition 2.12]{ChenMRT24}). A similar argument shows that $\mathrm{val}_{F}<_{X}\mathrm{val}_{E}$ for some Nash valuation $\mathrm{val}_{F}$ of $X$ with $a(F,X)=1$.  By \cite[Corollary 2.17]{ChenMRT24}, there is a $u\in\mathcal{O}_{X,p}$ such that $\mathrm{val}_{F}(u)>\mathrm{val}_{E}(u)$, which leads to a contradiction. Hence, $\mathrm{val}_{E}$ is a Nash valuation of $X$ under this additional assumption.

\end{proof}

\subsection{Examples}
Here we select some examples of 3-fold Gorenstein terminal singularities that satisfy the assumptions of the Proposition \ref{ch8prop}.

Define $\tau(f):=\min\{j+k\ |\ z^{j}u^{k} \in \mathrm{supp(}f(z,u))\text{ as a monimial}\ \}$ for $f\in\mathbb{C}\{z,u\}$. Recall that a Gorenstein terminal singularity $(X,p)$ is called of \emph{type $cE$} if it has the following expression.
\[
(x^{2}+y^{3}+yg(z,u)+h(z,u)=0)\subseteq\mathbb{C}^{4},
\]for some $g,h\in\mathbb{C}\{z,u\}$ with $\tau(g)\geq 3$ and $\tau(h)\geq 4$. Furthermore,
\begin{itemize}
    \item[(1)] it is called type $cE_{6}$ if $\tau(h)=4$ and $\tau(g)\geq 3$,
    \item[(2)] it is called type $cE_{7}$ if $\tau(h)\geq 5$ and $\tau(g)= 3$, and
    \item[(3)] it is called type $cE_{8}$ if $\tau(h)=5$ and $\tau(g)\geq 4$.
\end{itemize}
\begin{example}[\cite{ChecE15,Chen16}]
Let $X$ be the Gorenstein terminal singularity of type $cE_{6}$ which satisfies the conditions in \cite[Subcase 1-2 of Theorem 34]{Chen16}. After applying the change of variables mentioned in \cite[Subcase 1-2 of Theorem 34]{Chen16} for $(X,p)$, consider the weighted blowing-up $\pi_{\sigma}:\widebar{X}\to X$ with the weight $\sigma=(3,2,1,1)$. By the argument in \cite[Lemma 7 and Theorem 5]{Chen16}, $\pi_{\sigma}$ is a divisorial contraction with minimal discrepancy to $X$ (in this case it is $1$), and we obtain $(\widebar{X})_{\mathrm{sing}}=[(\widebar{X})_{\mathrm{sing}}]_{\mathrm{ind}>1}=\{Q_{1},Q_{2}\}$ where $Q_{1},Q_{2}$ are terminal quotient singularities of Gorenstein index $3,2$, respectively. Hence, there is a Nash valuation induced by some exceptional prime divisor over $X$ with minimal discrepancy due to Theorem \ref{ch8prop}. 

In fact, there are exactly two divisors over $X$ with discrepancy $1$ (see \cite[Level 1 in 5.4]{ChecE15}). By \cite[Level 1 in 5.4]{ChecE15}, we have $F=\pi_{\sigma'}^{-1}(p)$ with $\sigma'=(2,2,1,1)$, and it is also a divisorial contractions with minimal discrepancy to $X$. Hence, $\mathrm{val}_{E}$ is a Nash valuation of $X$.
\end{example}

\begin{example}[\cite{Chen16}]
Let $(X,p)$ the Gorenstein terminal singularity of type $cE_{6}$ which satisfies the conditions in \cite[Case 3 of Theorem 34]{Chen16}. Consider the weighted blowing-up $\pi_{\sigma}:\widebar{X}\to X$ with the weight $\sigma=(4,3,2,1)$. By the argument in \cite[Case 3 of Theorem 34]{Chen16}, we have $[(\widebar{X})_{\mathrm{sing}}]_{\mathrm{ind}=1}$ are at worst of type $cE_{6}$ and $[(\widebar{X})_{\mathrm{sing}}]_{\mathrm{ind}>1}$ contains three cyclic quotient singularities. Hence, there is a Nash valuation induced by some exceptional prime divisor over $
X$ with minimal discrepancy due to Proposition \ref{ch8prop}.     
\end{example}

There are other examples of 3-fold Gorenstein terminal singularities of type $cE_{7}$ and $cE_{8}$ that satisfy the assumptions in Proposition \ref{ch8prop}. See \cite[Theorem~36 and Theorem~37]{Chen16}.





\subsection*{Acknowledgments}

The author would like to thank his advisor Prof.\ Roi Docampo for many useful ideas, discussions, suggestions, and feedback for this research. He also thanks Chih-Kuang Lee, Shravan Saoji, Kui-Yo Chen, Prof.\ Tsung-Ju Lee, and Weichung Chen for their useful discussions about combinatorics and birational geometry. The author thanks Prof.\ Jungkai Alfred Chen for inviting him to give a talk and having discussions in the algebraic geometry seminar at National Center for Theoretical Science (NCTS) in Taipei, Taiwan. He also thanks Prof.\ Hsueh-Yung Lin for providing some ideas on this topic. The author thanks Le Centre Henri Lebesgue for inviting him to give a talk about this research in \emph{Spring School/Conference: Singularity in Algebraic Geometry} in Rennes, France. He thanks Prof.\ David Bourqui, Christopher Chiu and Sasha Viktorova for the discussions about my results and other related topics. 

\nocite{Sho93}

\bibliographystyle{alpha-links}
\bibliography{refs}

\end{document}